%% file: main.tex
\documentclass[11pt,a4paper]{article}

\usepackage{graphicx}
\usepackage{subcaption} 
\usepackage{amsmath, amsfonts, amssymb}
\usepackage{enumerate}
\usepackage{longtable}
\usepackage{multirow}
\usepackage{color}
\usepackage{url}
\usepackage{caption}
\usepackage{subcaption}
\usepackage{enumitem}
\usepackage{dsfont}
\usepackage{algorithm}        
\usepackage{algpseudocode}
\usepackage{mathtools}
\usepackage{hyperref}
\usepackage[normalem]{ulem}

\newtheorem{theorem}{Theorem}[section]

\newtheorem{corollary}{Corollary}[section]

\newtheorem{definition}{Definition}[section]

\newtheorem{lemma}{Lemma}[section]

\newtheorem{proposition}{Proposition}[section]
\newtheorem{remark}{Remark}[section]

\newtheorem{assumption}{Assumption}[section]

\newenvironment{proof}[1][Proof]{%
  \par\noindent\textbf{#1.}\ %
}{%
  \par\vspace{-0.5 ex}            
  \noindent\hfill\rule{0.5em}{0.5em}\par
  \vspace{1ex}
}

\newcommand{\R}{\mathbb{R}}

\newcommand{\N}{\ensuremath \mathbb{N}}

\newcommand{\F}{\mathcal{F}}

\newcommand{\Pb}{\mathbb{P}}

\newcommand{\me}{\mathbb{E}}

\newcommand{\f}{\tilde{f}}
\newcommand{\D}{\Delta}
\newcommand{\Imax}{\bar{\imath}}

\newcommand{\Dexp}{p}
\newcommand{\aexp}{\Dexp/(\Dexp-1)}
\newcommand{\goldelta}{r}

\newcommand{\Xk}{X_k}

\newcommand{\EE}{\mathbb{E}}

\title{Extrapolation-based Direct Search for Nonsmooth Stochastic Zeroth-Order Optimization}

	\author{
    A.~Palmieri \thanks{Dipartimento di Matematica, Universit\`a
		di Padova, Italy
		(\tt{anthony.palmieri@studenti.unipd.it}).}
	\and
	F.~Rinaldi \thanks{Dipartimento di Matematica, Universit\`a
		di Padova, Italy
		(\tt{rinaldi@math.unipd.it}).}
    \and
    S.~Shashaani
    \thanks{Edward P. Fitts Department of Industrial and System Engineering, North Carolina State University, Raleigh, NC, USA
		(\tt{sshasha2@ncsu.edu}).}
}

\begin{document}
	\maketitle

\input{sections/abstract}

\input{sections/intro}


\input{sections/algorithm}

\input{sections/assumptions_and_preliminaries}
\input{sections/convergence}


\input{sections/complexity_goldstein}

\input{sections/numerical_results}

\input{sections/conclusions}
\appendix
\input{sections/appendix}

\input{sections/appendix_estimates}

\bibliographystyle{plain}
\bibliography{sdse}
	
\end{document}

%% file: sections/abstract.tex
We propose and analyze a stochastic direct-search method for unconstrained
zeroth-order minimization of locally Lipschitz, possibly nonsmooth, objectives.
The method combines random polling directions with a stochastic extrapolating
line search based on a sufficient-decrease test of order \(\Dexp\). Under
conditional accuracy assumptions on the stochastic estimates, 
we prove almost-sure convergence to Clarke
stationary points. We further establish an expected iteration complexity
bound. Specifically, using a supermartingale stopping-time argument, we prove
that
$\mathcal O\left(
        \max\left\{
            \goldelta^{-\Dexp},
            \varepsilon^{-\Dexp/(\Dexp-1)}
        \right\}
    \right)
$
iterations are sufficient in expectation to reach an
\((\goldelta,\varepsilon)\)-Goldstein stationary point. Moreover, we derive a corresponding expected tested-point complexity bound of order
\(\mathcal O\bigl(\varepsilon^{1-n}
\max\{r^{-p},\varepsilon^{-p/(p-1)}\}\bigr)\). To the best of
our knowledge, this is the first convergence and expected-complexity analysis
for an \emph{extrapolation-based direct-search} method in a \emph{nonsmooth
stochastic} setting. Numerical experiments on a DFO benchmark suite highlight competitive performance against well-established stochastic direct-search methods.


%% file: sections/intro.tex
\section{Introduction} \label{sec:intro}
We study the unconstrained problem
\begin{equation}\label{gen_prob}
  \min_{x\in\R^n} f(x),
\end{equation}
where $f:\R^n\to\R$ is a \emph{locally} Lipschitz (possibly nonsmooth) function that cannot be evaluated exactly. 
Instead, we only have access  to a \emph{stochastic
zeroth-order oracle} that returns a random estimate $\tilde f(x)$ of $f(x)$ at any query point $x$. For example, 
the stochastic estimate can be modeled as a random variable
parameterized by $x$, namely
\[
  \tilde f(x)\;=\;F(x,\zeta),\qquad \zeta\sim\mathbb P,
\]
i.e., the oracle draws a random seed $\zeta$  from an underlying probability space
$(\Omega,\mathcal F,\mathbb P)$ and outputs $F(x,\zeta)$. 
Throughout the paper, all algorithmic quantities are random variables adapted to
the natural filtration induced by the oracle and the algorithmic randomness (precise assumptions on
the estimator, e.g., probabilistic accuracy, tails, and independence are stated in
Section~\ref{sec:assumptions}).
Our goal is to design and analyze a \emph{derivative-free} method that combines random dense directions with a
stochastic line search, proving convergence to Clarke stationary points and establishing expected
complexity bounds in terms of oracle calls.

\subsection{Related Work} \label{subsec:related_work}
\input{sections/related_work}

\subsection{Contributions} \label{subsec:contributions}
\input{sections/contributions}
\subsection{Paper Structure} \label{subsec:paper_structure}
The paper proceeds as follows. 
Section~2 presents the algorithm, the stochastic line search, and notation.
Section~3 states the standing assumptions (compactness, conditional independence, and $\Dexp$-tail).
Section~4 proves a uniformly positive conditional expected merit-function decrease proportional  to $\Delta_k^p$, shows $\sum_k \Delta_k^{\Dexp}<\infty$ for $\Dexp >1$ almost surely and hence $\Delta_k\to 0$ almost surely. This establishes convergence to Clarke stationary point almost surely. Section 5 proves that the algorithm reaches an $(\goldelta,\varepsilon)$-Goldstein stationary point in expected $\mathcal O\left(\max\left\{\goldelta^{-\Dexp}, \varepsilon^{-\frac{\Dexp}{\Dexp-1}}\right\}\right)$ iterations 
using a supermartingale stopping-time analysis. Furthermore, it derives
the corresponding expected tested-point complexity.
Section~6 presents numerical experiments showing that the proposed method is competitive with state-of-the-art stochastic derivative-free algorithms on a standard nonsmooth benchmark.
Section 7 finally summarizes the theoretical results and discusses possible future extensions of the method. The appendix gathers auxiliary proofs.

%% file: sections/related_work.tex
Derivative-free optimization (DFO) focuses on  problems whose derivative information is unavailable, unreliable, or simply too expensive to obtain, requiring algorithms to work only with (possibly noisy) function values. Two main approaches are common in this context, which are the model-based methods and the direct-search methods. In model-based methods, one builds at each iteration a local surrogate  intended to approximate $f$ on a neighborhood of the current point, proposes a step by (approximately) minimizing the model inside that neighborhood, and accepts or rejects it by comparing predicted and observed decrease. Direct-search methods, by contrast, do not form an explicit model, but rather evaluate $f$ on a structured set of trial points 
and update the current point based on a tailored acceptance test (e.g., sufficient-decrease of the objective function).
The deterministic DFO theory is well established; see, e.g., the monograph of Conn, Scheinberg, and Vicente~\cite{MR2487816}, and the survey of Larson, Menickelly, and Wild~\cite{MR3963507} for further details. 
Both model-based and direct-search schemes extend to stochastic zeroth-order oracles, but the analysis must then keep track of how estimation error affects acceptance decisions and step-size updates.\\
On the model-based side, trust-region methods based on probabilistic accuracy conditions~\cite{MR3245880} demonstrate that much of the classical trust-region mechanism remains valid when local models and function estimates are sufficiently accurate with 
high probability, and martingale-based arguments play a central role in the theoretical analysis. This line of work ultimately led to the development of STORM-type frameworks, which systematically integrate randomized local models with stochastic function evaluations within a trust-region mechanism~\cite{MR3800867}. Expected-complexity guarantees for STORM-type methods can be derived through stopping-time and supermartingale techniques~\cite{MR4151319}, and this analysis has been adapted both to random-subspace trust-region schemes for large-scale problems~\cite{MR4777851} and to stochastic line-search mechanisms~\cite{MR4060460}. In a separate framework for derivative-free stochastic trust-region schemes, adaptive sampling allows to 
drive down the estimation error according to a measure of stationarity, ensuring that the local models and function estimates are eventually sufficiently accurate almost surely; ASTRO-DF is a representative example of an algorithm built on this principle~\cite{MR3880261}. Subsequent work establishes almost-sure iteration and sample complexity results for ASTRO-DF and quantifies how variance-reduction devices, such as common random numbers (CRN), can improve sample complexity of these methods by replacing stringent per-point accuracy requirements with the difference accuracy requirements~\cite{MR4959972}. A related development 
uses similar difference error tail bounds within the supermartingle complexity analysis to obtain 
reduced sample sizes (with further benefits under CRN) while preserving convergence guarantees for both trust-region and direct-search methods~\cite{MR4758401}.\\
On the direct-search side, StoMADS extends MADS to stochastic black-box objectives through probabilistic estimates and variance control, and uses a mesh-based strategy to obtain almost-sure convergence to Clarke stationary points~\cite{MR4238147}, although with no complexity analysis. 
Constrained variants incorporate progressive barriers and probabilistic feasibility control~\cite{MR4550962}. For smooth objectives, the work by Dzahini~\cite{MR4359469} provides an expected-complexity analysis for stochastic direct search under a power-type sufficient-decrease condition, again via a supermartingale-based argument, and a recent survey summarizes current direct-search paradigms and guarantees, including stochastic settings and line-search variants~\cite{MR4919092}. Linesearch-based DFO with extrapolation under noisy oracles has also been studied recently, with convergence and complexity results in smooth regimes~\cite{desantis2025linesearchbasedderivativefreemethodnoisy}. Another zeroth-order paradigm is randomized smoothing, where one replaces the
objective by a smoothed surrogate and suitably build random estimators of the surrogate gradient. This idea is
central to the random gradient-free framework of Nesterov and
Spokoiny~\cite{MR3627456}. For nonsmooth nonconvex objectives, \cite{LinEtAl2022GFM} was among the first works to connect randomized smoothing
with Goldstein stationarity and to analyze gradient-free methods for reaching
\((r,\varepsilon)\)-Goldstein stationary points. This line was later sharpened
by Kornowski and Shamir~\cite{JMLR:kornowskishamir}, who obtained optimal
dimension-dependence for stochastic zero-order nonsmooth nonconvex optimization.
Our work fits into the stochastic direct-search/line-search thread, but targets a fully nonsmooth, derivative-free setting: we couple dense random directions with a run-until-failure line-search policy. We prove almost-sure convergence in the spirit of~\cite{MR4758401}, by relying on Clarke~\cite{Clarke1990} and Goldstein/Lebourg nonsmooth tools~\cite{Goldstein1977,Jahn1996}. Moreover, we derive an expected-complexity result by leveraging the supermartingale framework proposed in~\cite{MR4151319}, while working under minimal oracle assumptions.

%% file: sections/contributions.tex
The proposed approach combines a line search strategy with random directions uniformly generated on the unit sphere. 
Under conditional tail-accuracy assumptions (motivated by the weak tail-bound framework introduced in~\cite{MR4758401}), we prove almost-sure convergence to Clarke stationary points and, by suitably
adapting the analyses in~\cite{MR4151319,MR4060460}, we derive an expected
iteration complexity bound of order
\(\mathcal O(\max\{r^{-p},\varepsilon^{-p/(p-1)}\})\) for reaching an
\((r,\varepsilon)\)-Goldstein stationary point. We also obtain the corresponding
expected tested-point complexity bound
\(\mathcal O(\varepsilon^{1-n}\max\{r^{-p},\varepsilon^{-p/(p-1)}\})\), where
tested-point complexity counts the number of points at which stochastic function
estimates are queried.
The analysis relies exclusively on nonsmooth tools, that is Clarke and Goldstein subdifferentials and the Lebourg mean value theorem (see, e.g., \cite{Clarke1990,Goldstein1977,Jahn1996} for further details on these theoretical tools). To replace gradient-based descent estimates in the complexity argument, we test multiple \emph{dense random directions} per iteration (see, e.g., \ \cite{MR3627456}) and establish a geometric success guarantee: with probability strictly larger than $1/2$, at least one sampled direction falls in a spherical cap aligned with a Goldstein-descent direction. This spherical-cap mechanism is fundamentally different from the positive-spanning arguments used in smooth direct-search analyses (see, e.g., \cite{MR4359469}). 
To the best of our knowledge, this is the first convergence and expected-complexity analysis for an \emph{extrapolation-based direct-search} method in a  \emph{nonsmooth} setting.
A recent work by De Santis, Liuzzi and Lucidi~\cite{desantis2025linesearchbasedderivativefreemethodnoisy} studies a stochastic line-search scheme in a smooth setting, assuming continuously differentiable objectives with $L$-Lipschitz gradients and fixed-confidence accuracy for the function estimates; in that framework, the sampling strategy enforces a variance-stepsize coupling (of order $\Delta_k^4$) and the resulting complexity guarantees are stated in terms of the expected gradient norm (e.g., $\mathcal O(\varepsilon^{-2})$).
The present work addresses a complementary regime, focusing on merely locally Lipschitz, fully nonsmooth objectives and purely zeroth-order stochastic information. In contrast to analyses that assume a variance-stepsize coupling directly on the stochastic estimates, we impose conditional polynomial-tail accuracy conditions. 
 We combine a line search with dense random directions and obtain convergence and expected-complexity results in terms of Clarke/Goldstein stationarity. 

%% file: sections/algorithm.tex
\section{Algorithm: Direct Search with Extrapolation for Nonsmooth Objectives} \label{sec:algo}

We consider a direct-search method with extrapolation for possibly nonsmooth
stochastic zeroth-order optimization. The outer scheme is given in
Algorithm~\ref{alg:dse2-cartis}, and the stochastic line-search subroutine is given in
Algorithm~\ref{alg:lsep}. We fix $\Dexp\in (1,2]$, $\theta>0$, $\gamma\in(0,1)$, 
$\bar{m}\in\mathbb N$,  and $\Imax\in\mathbb N$.
Random
quantities are denoted by uppercase letters and their realizations by lowercase
letters. At the beginning of iteration $k$, the current iterate and stepsize are
$X_k\in\mathbb R^n$ and $\Delta_k>0$. The algorithm then samples
$\bar{m}$ candidate directions
$D_{k,1},\ldots,D_{k,\bar{m}}\in\mathbb S^{n-1},
$
where we denote by
\(
    \mathbb S^{n-1}
    :=
    \{d\in\mathbb R^n:\|d\|=1\}
\)
the unit sphere in \(\mathbb R^n\). We use two sigma-algebras. Let $\mathcal G_{k-1}$ denote the information
available before sampling the directions at iteration $k$, so that $X_k$ and
$\Delta_k$ are $\mathcal G_{k-1}$-measurable. Conditionally on
$\mathcal G_{k-1}$, the directions $D_{k,1},\ldots,D_{k,\bar{m}}$ are sampled
independently and uniformly on $\mathbb S^{n-1}$. After sampling them, but
before drawing any stochastic function estimates at iteration $k$, we set
\[
    \mathcal F_{k-1}
    :=
    \mathcal G_{k-1}
    \vee
    \sigma(D_{k,1},\ldots,D_{k,\bar{m}}).
\]
Thus $X_k$, $\Delta_k$, and all directions $D_{k,m}$ are
$\mathcal F_{k-1}$-measurable, while the estimates generated during iteration
$k$ are not. Oracle assumptions are stated conditionally on
$\mathcal F_{k-1}$. For $m=1,\ldots,\bar{m}$ and $i\ge0$, define
\[
    \Delta_{k,i}:=\gamma^{-i}\Delta_k,
    \qquad
    X_{k,m}^{(i)}:=X_k+\Delta_{k,i}D_{k,m}.
\]
We also set $X_k^{(-1)}:=X_k$ and $\Delta_{k,-1}:=0$. The corresponding true and
estimated values are
\[
    f_{k,m}^{(i)}:=f(X_{k,m}^{(i)}),
    \qquad
    \tilde f_{k,m}^{(i)}:=\tilde f(X_{k,m}^{(i)}),
    \qquad i\ge0,
\]
and
\[
    f_k^{(-1)}:=f(X_k),
    \qquad
    \tilde f_k^{(-1)}:=\tilde f(X_k).
\]
At iteration $k$, the algorithm first samples $\bar{m}$ search directions independently, then it observes $\tilde f_k^{(-1)}$ and finally
tests the directions $D_{k,1},\ldots,D_{k,\bar{m}}$ sequentially. Direction $m$ is
successful at depth $i\ge0$ if
\[
    \tilde f_k^{(-1)}-\tilde f_{k,m}^{(i)}
    \ge
    \theta\Delta_{k,i}^{\Dexp}.
\]
The algorithm accepts the first direction for which the line search succeeds.
We denote its index by $m_k^*$ and the last successful extrapolation depth by
$H_k\ge0$. If no direction succeeds, we set $H_k=-1$. Thus, on a successful iteration,
\[
    D_k:=D_{k,m_k^*},
    \qquad
    X_k^{(i)}:=X_{k,m_k^*}^{(i)},
    \qquad
    f_k^{(i)}:=f_{k,m_k^*}^{(i)},
    \qquad
    \tilde f_k^{(i)}:=\tilde f_{k,m_k^*}^{(i)}.
\]
The accepted step length is
\[
    \Delta_k^{\rm acc}
    :=
    \begin{cases}
        0, & H_k=-1,\\[1mm]
        \gamma^{-H_k}\Delta_k, & H_k\ge0.
    \end{cases}
\]
Hence $X_{k+1}=X_k+\Delta_k^{\rm acc}D_k,$
with the convention that $X_{k+1}=X_k$ when $H_k=-1$. The stepsize update is
\[
\Delta_{k+1}
=
\begin{cases}
    \gamma\Delta_k, & H_k=-1,\\[1mm]
    \gamma^{-1}\Delta_k, & H_k=0,\\[1mm]
    \gamma^{-H_k}\Delta_k, & H_k\ge1.
\end{cases}
\] 
Equivalently, on successful iterations,
$\Delta_{k+1}
    =
    \max\{\Delta_k^{\rm acc},\gamma^{-1}\Delta_k\}.$
Thus every unsuccessful iteration contracts the radius by $\gamma$, whereas
every successful iteration expands the next radius by at least $\gamma^{-1}$.
\begin{algorithm}[H]
\caption{DSE}
\label{alg:dse2-cartis}
\begin{algorithmic}[1]
\State \textbf{Inputs:} $x_0\in\R^n$, $\delta_0>0$, $\theta>0$, $\gamma\in(0,1)$, $\bar{m}\in\N$, $\Imax \in \N$
\For{$k=0,1,2,\dots$}
  \State Sample $d_{k,1}, \ \dots, \ d_{k,\bar{m}}\in \mathbb S^{n-1}$ independently
  \State Observe the baseline estimate $\tilde f_k^{(-1)}\gets \tilde f(x_k)$
  \For{$m=1$ \textbf{to} $\bar{m}$}
    \State $(\delta^{\mathrm{acc}},\,\delta^{\mathrm{next}})
      \gets \textbf{StochasticLinesearch}\left(x_k,\delta_k,d_{k,m},\theta,\gamma, \f_k^{(-1)}, \Imax\right)$
    \If{$\delta^{\mathrm{acc}}>0$}
   
      \State \textbf{break}
    \EndIf
  \EndFor
     \State $d_k=d_{k,m}$
    \State $ \delta_{k+1}\gets \delta^{\mathrm{next}}$
  \State $x_{k+1}\gets x_k+\delta^{\mathrm{acc}}\,d_{k}$
\EndFor
\end{algorithmic}
\end{algorithm}

\begin{algorithm}[H]
\caption{StochasticLinesearch}
\label{alg:lsep}
\begin{algorithmic}[1]
\State \textbf{Input:} $x \in \mathbb{R}^n$, $\delta > 0$, $d \in \mathbb{R}^n$, $\theta > 0$, $\gamma \in (0,1)$, $\f_k^{(-1)}, \Imax$
        \State Set $\delta_0 = \delta$, $\delta_i=\gamma^{-i}\delta$, \quad $i=0,\ldots,\Imax$
\If{$\tilde f(x+\delta_0 d)>\tilde f_k^{(-1)}-\theta\delta_0^p$}
    \State \Return $(0,\gamma\delta_0)$
\EndIf
\State $h=\max\left\{j\in[0:\Imax]:
\tilde f(x+\delta_i d)\le \tilde f_k^{(-1)}-\theta\delta_i^p, \forall\ i\in[0:j] \right\}$
\If{$h=0$}
    \State \Return $(\delta_0,\delta_0/\gamma)$
\Else
    \State \Return $(\delta_h,\delta_h)$
\EndIf
\end{algorithmic}
\end{algorithm}

\noindent
We  would like to highlight that the maximization problem defined in Step 6 of the stochastic linesearch procedure is only a compact way of describing a sequential run-until-failure procedure.
In practice, after the level \(i=0\) test succeeds, the line search evaluates the
levels \(i=1,2,\ldots,\Imax\) one at a time and stops as soon as the
sufficient-decrease test fails. The returned value of \(h\) is the last
consecutive level for which all tests from \(0\) up to \(h\) have succeeded.
Thus, the algorithm does not evaluate extrapolation levels beyond the first
failed test.  The following remark finally clarifies the role of the maximum extrapolation depth
\(\Imax\) in Algorithm~\ref{alg:lsep}.

\begin{remark}[Role of the extrapolation cap]
The extrapolation cap \(\Imax\) is mainly an analytical device and is not
restrictive in practice. Indeed, any implementation is run under a finite
oracle-call budget, and \(\Imax\) can be chosen as large as this
budget allows. Its role in the analysis is to keep the extrapolation search
tree uniformly finite, which yields a simple maximal-error bound and a direct
tested-point complexity estimate (See Sections \ref{sec:assumptions}-\ref{sec:clarke complexity}).
\end{remark}


%% file: sections/assumptions_and_preliminaries.tex
\section{Assumptions and Preliminaries}\label{sec:assumptions}
We now state the main assumptions on the objective and the stochastic oracle, deriving the key error bounds needed for the analysis. Throughout, we condition on $\F_{k-1}$, which contains $X_k, \D_k$, and the sampled directions $D_{k,1}, \ \dots, \ D_{k,\bar{m}}$, but not the stochastic estimates generated at iteration $k$. We assume noisy function estimates with the following basic structure.
\begin{assumption}[Compactness of the iterates]\label{ass:compact}
There exists a nonempty compact set \( {\mathcal C}\subset\mathbb R^n\) such that $X_k\in {\mathcal C} \ \text{a.s. for all $k\ge0$}.$
Moreover, the initial stepsize $\Delta_0>0$ is deterministic and finite.
\end{assumption}

\begin{remark}[Compactness of the tested points]
\label{rem:compact-tested-points}
Under the fixed maximum extrapolation depth
\(\Imax\in\mathbb N_0\),
and Assumption~\ref{ass:compact}, all tested points lie in a compact
enlargement of \({\mathcal C}\). Indeed, let
\[
    \Delta_{\mathcal C}:=\operatorname{diam}({\mathcal C}),
    \qquad
    \bar\Delta:=\max\{\Delta_0,\gamma^{-1}\Delta_{\mathcal C}\}.
\]
We first prove that $\Delta_k\le \bar\Delta,
    \ \forall k\ge0.$
The claim is true for \(k=0\) by the definition of \(\bar\Delta\). Suppose that
\(\Delta_k\le\bar\Delta\). If iteration \(k\) is unsuccessful, then
\[
    \Delta_{k+1}=\gamma\Delta_k\le\Delta_k\le\bar\Delta.
\]
If iteration \(k\) is successful with \(H_k=0\), then 
$  X_{k+1}=X_k+\Delta_kD_k$ is the accepted point.
Since \(X_k,X_{k+1}\in {\mathcal C}\), we have
\[
    \Delta_k=\|X_{k+1}-X_k\|\le \Delta_{\mathcal C}.
\]
Therefore,
\[
    \Delta_{k+1}=\gamma^{-1}\Delta_k\le\gamma^{-1}\Delta_{\mathcal C}\le\bar\Delta.
\]
Finally, if iteration \(k\) is successful with \(H_k\ge1\), then $X_{k+1}=X_k+\gamma^{-H_k}\Delta_kD_k,$ and hence
\[
    \Delta_{k+1}
    =
    \gamma^{-H_k}\Delta_k
    =
    \|X_{k+1}-X_k\|
    \le
    \Delta_{\mathcal C}
    \le
    \bar\Delta.
\]  
Thus, by induction, $\Delta_k\le\bar\Delta, \ \forall k\ge0.$
Consequently, for every tested point,
\[
    \|X_{k,m}^{(i)}-X_k\|
    =
    \gamma^{-i}\Delta_k
    \le
    \gamma^{-\Imax}\bar\Delta.
\]
Therefore
\[
    X_{k,m}^{(i)}
    \in
    {{\mathcal C}_{enl}}
    :=
    {\mathcal C}+\gamma^{-\Imax}\bar\Delta\,\mathbb B,
    \qquad
    1\le m\le \bar{m},\quad 0\le i\le \Imax.
\]
Here,
\(
    \mathbb B:=\{z\in\mathbb R^n:\|z\|\le 1\}
\)
denotes the closed unit ball in \(\mathbb R^n\), and the symbol \(+\) denotes
the Minkowski sum of sets, namely
\(
    A+B:=\{a+b:a\in A,\ b\in B\}.
\)
Since \({\mathcal C}\) is compact and \(\gamma^{-\Imax}\bar\Delta\,\mathbb B\) is
compact, the set  ${\mathcal C}_{enl}$ is compact. Hence, all iterates and all tested trial
points generated by the algorithm lie in the compact set ${\mathcal C}_{enl}$.
\end{remark}

\begin{assumption}[Conditional independence]\label{ass:indep}
At iteration \(k\), conditionally on \(\mathcal F_{k-1}\), the stochastic
function estimates associated with the base point and with the search
tree,
\[
    \tilde f_k^{(-1)}
    \quad\text{and}\quad
    \left\{
        \tilde f_{k,m}^{(i)}
        :
        1\le m\le \bar{m},\ 0\le i\le \Imax
    \right\},
\]
are mutually independent.
\end{assumption}
\begin{assumption}[$\Dexp$-tail for single-point errors]\label{ass:quadratic tail}
There exists $\varepsilon_h>0$ such that, for every $\alpha\ge \varepsilon_h$:
\begin{equation}\label{eq:A4}
\begin{aligned}
&\mathbb P\!\left(
|\tilde f_{k}^{(-1)}-f_{k}^{(-1)}|
\ge
\alpha\,\Delta_k^{\Dexp}
\ \middle|\ 
\F_{k-1}
\right)
\le
\frac{\varepsilon_h}{\alpha^{\aexp}},
\\[1mm]
&\mathbb P\!\left(
|\tilde f_{k,m}^{(i)}-f_{k,m}^{(i)}|
\ge
\alpha\,\Delta_k^{\Dexp}
\ \middle|\ 
\F_{k-1}
\right)
\le
\frac{\varepsilon_h}{\alpha^{\aexp}},
\quad
i\in[0:\Imax],\ m\in[1:\bar{m}].
\end{aligned}
\end{equation}

\end{assumption}

\noindent
To simplify the notation, from now on we set $\bar E_{k,m}^{(i)}:=\tilde f^{(i)}_{k,m} - f^{(i)}_{k,m}$, with
\(
    \bar E_k^{(-1)}
    :=
    \tilde f_k^{(-1)}-f_k^{(-1)}
\),
 and define the error difference estimator \[E_{k,m}^{(i)}:=(\tilde f_k^{(-1)}-\tilde f_k^{(i)})-(f_k^{(-1)}-f_k^{(i)}) = \bar E_{k}^{(-1)} - \bar E_{k,m}^{(i)},\]
Assumptions~\ref{ass:compact} and~\ref{ass:indep} are standard in stochastic derivative-free optimization (see, e.g., \cite{MR4238147,MR4758401}). 
In particular, compactness of the iterate set allows us to invoke tools from nonsmooth analysis
(such as the Clarke subdifferential) and ensures, under mild regularity (e.g., continuity on ${\mathcal C}_{enl}$), that
the objective $f$ attains its minimum. 
For notational simplicity, since our analysis will be restricted to a compact set, we will often write $f$ is $L$-Lipschitz.  Assumptions~\ref{ass:quadratic tail}, instead, is a technical condition that provides
quantitative control of the stochastic estimation error $\tilde f$ and
is needed to establish the merit-function drift bounds used throughout the analysis. From this basic set of hypotheses we derive several consequences that will be used in the convergence
and complexity proofs. For readability, we list the statements here and defer the proofs to the Appendix \ref{appendix:A}.
Similarly, from Assumption \ref{ass:quadratic tail} we can get several other useful results by simply integrating the tails.
\begin{lemma}[Conditional Mean Bound]
\label{lem: conditional mean bound}
Let Assumption \ref{ass:quadratic tail} hold. For each $k$, $i\in[-1: \Imax]$ and $m\in[1:\bar{m}]$ recall the estimation error $\bar E_{k,m}^{(i)}:=\tilde f^{(i)}_{k,m} - f^{(i)}_{k,m}$. Then, there exists a constant $\mu_h>0$ such that the following bound on the conditional mean holds, uniformly in $k$ and $i$
\[
\mathbb{E}\!\left[\,|\bar E_{k,m}^{(i)}|\ \middle|\ \mathcal F_{k-1}\right]
\ \le\ \mu_h\,\Delta_k^{\Dexp},
\]
with $\mu_h$ a suitably chosen positive parameter.
\end{lemma}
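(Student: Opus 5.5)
The plan is to apply the layer-cake (tail-integral) formula to the normalized error $|\bar E_{k,m}^{(i)}|/\Delta_k^{\Dexp}$. Since $\Delta_k>0$ is $\mathcal F_{k-1}$-measurable, we can write
\[
\mathbb E\!\left[\,|\bar E_{k,m}^{(i)}|\ \middle|\ \mathcal F_{k-1}\right]
=\Delta_k^{\Dexp}\,\mathbb E\!\left[\,Z\ \middle|\ \mathcal F_{k-1}\right],
\qquad Z:=\frac{|\bar E_{k,m}^{(i)}|}{\Delta_k^{\Dexp}}\ge 0 .
\]
The conditional layer-cake identity $\mathbb E[Z\mid\mathcal F_{k-1}]=\int_0^\infty \mathbb P(Z\ge\alpha\mid\mathcal F_{k-1})\,d\alpha$ then holds almost surely, for instance by using a regular conditional distribution or by Tonelli applied to $Z=\int_0^\infty \mathbf 1\{Z\ge\alpha\}\,d\alpha$ together with conditional monotone convergence. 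Note that the event $\{Z\ge\alpha\}$ is exactly $\{|\bar E_{k,m}^{(i)}|\ge\alpha\Delta_k^{\Dexp}\}$, which is the event appearing in Assumption~\ref{ass:quadratic tail}.

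Next, split the integral at $\alpha=\varepsilon_h$. On $[0,\varepsilon_h)$, bound the conditional probability trivially by $1$, which contributes at most $\varepsilon_h$. On $[\varepsilon_h,\infty)$, use \eqref{eq:A4}; for $i=-1$ use the first line and for $i\in[0:\Imax]$ the second. This gives
\[
\int_{\varepsilon_h}^\infty \frac{\varepsilon_h}{\alpha^{\Dexp/(\Dexp-1)}}\,d\alpha
=\varepsilon_h\,(\Dexp-1)\,\varepsilon_h^{-1/(\Dexp-1)} .
\]
The integral is finite because $\Dexp\in(1,2]$, so the exponent $q:=\Dexp/(\Dexp-1)\ge 2>1$ and $\int_{\varepsilon_h}^\infty\alpha^{-q}\,d\alpha=\varepsilon_h^{1-q}/(q-1)$ with $q-1=1/(\Dexp-1)$. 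Hence
\[
\mu_h:=\varepsilon_h+(\Dexp-1)\,\varepsilon_h^{\,1-1/(\Dexp-1)} ,
\]
which depends only on $\varepsilon_h$ and $\Dexp$. This makes the bound uniform in $k$, $i$ and $m$.

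No step is a real obstacle. The only points needing care are the conditional form of the layer-cake formula and the requirement $q>1$ for integrability, which is where the restriction $\Dexp\in(1,2]$ (indeed just $\Dexp>1$) is used. If one prefers to avoid conditional layer-cake arguments, an alternative is to bound $\mathbb E[Z\mathbf 1_A]$ for each $A\in\mathcal F_{k-1}$ using the unconditional layer-cake formula applied to $Z\mathbf 1_A$ and the tower property. This gives the same constant.
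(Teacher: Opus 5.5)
Your proposal is correct and follows the paper's route: normalize by $\Delta_k^{p}$, apply the conditional layer-cake formula, and bound the conditional tail probability by $1$ below a cutoff and by Assumption~\ref{ass:quadratic tail} above it. The only difference is that the paper optimizes the cutoff over $c\ge\varepsilon_h$, which gives the sharper constant $p\,\varepsilon_h^{(p-1)/p}$ when $\varepsilon_h\le 1$; your fixed cutoff $c=\varepsilon_h$ reproduces the paper's case-B constant $\varepsilon_h+(p-1)\varepsilon_h^{1-1/(p-1)}$, which is enough for the existence claim, and your care about the conditional layer-cake identity is a point the paper passes over.
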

From the previous lemma we can also bound the conditional expectation of the difference estimator $E_{k,m}^{(i)}$.
\begin{lemma}[Conditional $L^1$ bound for the error difference estimator]
\label{lem:diff-L1-from-A44}
Under Assumption \ref{ass:quadratic tail}, 
for every outer iteration $k$ and every inner index $i\in\{-1,0,1,\dots, \Imax\}$,
\begin{equation}\label{eq:diff-L1-core}
\mathbb E\!\left[\,|E_{k,m}^{(i)}|\,\middle|\,\mathcal F_{k-1}\right]
\ \le\ 2\,\mu_h\,\Delta_k^{\Dexp} \qquad \text{a.s.},
\end{equation}
with $\mu_h$ as in Lemma \ref{lem: conditional mean bound}. Consequently, the signed conditional mean is also controlled:
\begin{equation}\label{eq:diff-signed}
\big|\mathbb E\!\left[\,E_{k,m}^{(i)}\,\middle|\,\mathcal F_{k-1}\right]\big|
\ \le\ 2\,\mu_h\,\Delta_k^{\Dexp} \qquad \text{a.s.}.
\end{equation}
\end{lemma}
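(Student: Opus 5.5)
The plan is to reduce everything to Lemma~\ref{lem: conditional mean bound} through the triangle inequality and linearity of conditional expectation. By definition,
\[
E_{k,m}^{(i)}=\bar E_k^{(-1)}-\bar E_{k,m}^{(i)},
\]
so pointwise we have $|E_{k,m}^{(i)}|\le|\bar E_k^{(-1)}|+|\bar E_{k,m}^{(i)}|$.

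First, take conditional expectations given $\mathcal F_{k-1}$ in this inequality. Monotonicity and linearity of conditional expectation hold almost surely for nonnegative variables, so no integrability issue arises: the right-hand side will be shown finite. Lemma~\ref{lem: conditional mean bound} applies to the base error ($i=-1$) and to each tree error ($i\in[0:\Imax]$, $m\in[1:\bar m]$). Each of these conditional means is bounded by $\mu_h\Delta_k^{\Dexp}$, with the same constant $\mu_h$ uniform in $k,i,m$. Adding the two bounds gives \eqref{eq:diff-L1-core} almost surely.

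The degenerate case $i=-1$ needs a convention, since there $X_k^{(-1)}=X_k$. If both terms in $E^{(-1)}$ refer to the same estimate, then $E_{k,m}^{(-1)}=0$ and the bound is trivial. If they refer to two independent evaluations at $X_k$, the same triangle-inequality argument applies unchanged. Either way the constant $2\mu_h$ suffices.

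Second, the signed bound \eqref{eq:diff-signed} follows from conditional Jensen, $|\mathbb E[Y\mid\mathcal F_{k-1}]|\le\mathbb E[|Y|\mid\mathcal F_{k-1}]$. This is valid because $Y=E_{k,m}^{(i)}$ is conditionally integrable, which the first step establishes.

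No real obstacle is expected. The only point requiring care is that $\Delta_k$ is $\mathcal F_{k-1}$-measurable, so the factor $\Delta_k^{\Dexp}$ behaves as a constant under the conditioning. This is exactly how Lemma~\ref{lem: conditional mean bound} is stated, so it can be used directly. Notably, the independence in Assumption~\ref{ass:indep} is not needed.
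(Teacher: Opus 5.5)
Your proposal is correct and follows the paper's proof essentially verbatim: the pointwise triangle inequality on $E_{k,m}^{(i)}=\bar E_k^{(-1)}-\bar E_{k,m}^{(i)}$, two applications of Lemma~\ref{lem: conditional mean bound}, and conditional Jensen for the signed bound. Your side remarks are accurate points the paper leaves implicit: the convention needed for the degenerate index $i=-1$, the conditional integrability needed for Jensen, and the fact that Assumption~\ref{ass:indep} is not needed.
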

\begin{lemma}[Bounded maximal iteration error]
\label{ass:postL1}
At iteration \(k\), after sampling the trial directions
\(D_{k,1},\ldots,D_{k,\bar{m}}\), 
define the maximal difference-estimation error over the potential search tree by
\[
E_k^{\max}
:=
\max_{1\le m\le \bar{m}}
\max_{0\le i\le \Imax}
\left|
E_{k,m}^{(i)}
\right|.
\]
If Assumption \ref{ass:quadratic tail} holds, then via Lemma \ref{lem:diff-L1-from-A44}, one has
\[
\mathbb E\left[
E_k^{\max}
\mid
\mathcal F_{k-1}
\right]
\le
c_{\max}\Delta_k^p
\qquad
\text{a.s.},
\]
where \(c_{\max} = 2\mu_h\bar{m}(\Imax+1)\).
\end{lemma}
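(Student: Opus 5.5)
The plan is to bound the maximum by the sum and then use linearity of conditional expectation, so that the conditional $L^1$ bound of Lemma~\ref{lem:diff-L1-from-A44} can be applied term by term. No independence or tail argument beyond that lemma is needed; the loss is only the crude factor counting the nodes of the potential search tree.

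The key steps are as follows.

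\textbf{Step 1.} For every realization, since all the terms are nonnegative,
\[
E_k^{\max}
=
\max_{1\le m\le \bar{m}}\max_{0\le i\le \Imax}\bigl|E_{k,m}^{(i)}\bigr|
\le
\sum_{m=1}^{\bar{m}}\sum_{i=0}^{\Imax}\bigl|E_{k,m}^{(i)}\bigr| .
\]

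\textbf{Step 2.} Take conditional expectations given $\mathcal F_{k-1}$. Monotonicity and linearity of conditional expectation hold almost surely for nonnegative random variables, so
\[
\mathbb E\bigl[E_k^{\max}\mid\mathcal F_{k-1}\bigr]
\le
\sum_{m=1}^{\bar{m}}\sum_{i=0}^{\Imax}\mathbb E\bigl[|E_{k,m}^{(i)}|\mid\mathcal F_{k-1}\bigr].
\]
Each summand is integrable by Lemma~\ref{lem: conditional mean bound}, so there are no measurability or integrability issues.

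\textbf{Step 3.} Apply \eqref{eq:diff-L1-core} to each of the $\bar{m}(\Imax+1)$ terms. Each is bounded by $2\mu_h\Delta_k^{\Dexp}$, uniformly in $m$ and $i$. The reason is that Lemma~\ref{lem: conditional mean bound} holds for the base error and for every tree error with the same $\Delta_k$ scale, since Assumption~\ref{ass:quadratic tail} is stated with $\Delta_k^{\Dexp}$ for all tree nodes. Summing gives the constant $c_{\max}=2\mu_h\bar{m}(\Imax+1)$.

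There is no real obstacle. The only points needing care are the following:
\begin{itemize}
\item The directions $D_{k,m}$ are $\mathcal F_{k-1}$-measurable, so the tree points are fixed under the conditioning. This is exactly why the bound holds for the potential tree, irrespective of which nodes the algorithm actually evaluates.
\item The finiteness of the extrapolation cap $\Imax$ is what makes the union/sum bound finite.
\end{itemize}
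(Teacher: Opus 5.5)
Your proposal is correct and is essentially the paper's own argument: bound $E_k^{\max}$ by the sum of $|E_{k,m}^{(i)}|$ over the $\bar m(\Imax+1)$ nodes of the potential tree, then apply Lemma~\ref{lem:diff-L1-from-A44} to each term. This gives the constant $c_{\max}=2\mu_h\bar m(\Imax+1)$.
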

\noindent

\noindent
Finally, we note that the conditional \(\Dexp\)-tail bound in
Assumption~\ref{ass:quadratic tail} directly implies the commonly used notion of
\(\beta\)-probabilistically \(\varepsilon_f\)-accurate function estimates, as
made precise in the following remark.
\begin{remark}[From $\Dexp$-tails to $\beta$-probabilistic $\varepsilon_f$-accuracy]
\label{rem:A4-to-BPE}
Assumption~\ref{ass:quadratic tail} states that, for some $\varepsilon_h>0$ and all
$i\in\{-1,0,1,\dots, \Imax\}$,
\[
\mathbb P\!\left(|\tilde f_k^{(i)}-f_k^{(i)}|\ \ge\ \alpha\,\Delta_k^{\Dexp}\ \middle|\ \mathcal F_{k-1}\right)
\ \le\ \frac{\varepsilon_h}{\alpha^{\aexp}}\qquad\text{for all }\alpha\ge \varepsilon_h.
\]
Fix a target confidence $\beta\in(0,1)$. Choose
\[
\varepsilon_f\ \ge\ \max\!\left\{\varepsilon_h,\ \left(\frac{\varepsilon_h}{\,1-\beta\,}\right)^{\frac{\Dexp-1}{\Dexp}}\right\}.
\]
Then, setting $\alpha=\varepsilon_f$ in the tail bound gives
\[
\mathbb P\!\left(|\tilde f_k^{(i)}-f_k^{(i)}|\ \le\ \varepsilon_f\,\Delta_k^{\Dexp}\ \middle|\ \mathcal F_{k-1}\right)
\ \ge\ 1-\frac{\varepsilon_h}{\varepsilon_f^{\aexp}}\ \ge\ \beta,
\]
i.e., the usual $\beta$-probabilistically $\varepsilon_f$-accurate condition holds uniformly in $i$. If the tail in Assumption~\ref{ass:quadratic tail} holds for all $\alpha>0$, the simpler choice
$\varepsilon_f=(\varepsilon_h/(1-\beta))^{\frac{\Dexp-1}{\Dexp}}$ suffices.
\end{remark}
We conclude by observing that Assumption~\ref{ass:quadratic tail} is not restrictive in practice. Indeed, standard sampling strategies in stochastic derivative-free optimization often rely on averaging multiple independent realizations of the stochastic oracle; see, for example,~\cite{MR4238147,MR3800867,MR3880261}. Under the classical assumptions commonly imposed on such sample-average estimators, one obtains the stronger  conditions typically adopted in the stochastic derivative-free optimization literature, which in turn imply Assumption~\ref{ass:quadratic tail}. For completeness, Appendix~\ref{appendix:random estimates} establishes this connection for a standard sample-average estimator and derives the corresponding batch-size requirements.

%% file: sections/convergence.tex
\section{Main Convergence Result} \label{sec:convergence}

Following the trust-region/direct-search literature (see,e.g., \cite{MR3506227,MR4758401}), we analyze progress through the \emph{merit function}
\begin{equation}\label{eq:phi-def}
  \Phi_k \;=\; f(X_k)-f_{min} \;+\; \eta\,\Delta_k^{\Dexp},
\end{equation}
where \(f_{\min}\) is a lower bound for \(f\) on the compact region containing the
iterates, and $\eta$ is a positive parameter. The term \(f(X_k)-f^\star\) measures objective decrease, while
\(\eta\,\Delta_k^{\Dexp}\) penalizes overly aggressive step-size growth during successful extrapolations. This choice ensures that the \emph{merit-function
drift}
\(\EE[\Phi_k-\Phi_{k+1}\mid\F_{k-1}]\) trades off true decrease against step-size changes in a way
that yields a uniform lower bound proportional to \(\Delta_k^{\Dexp}\).
At iteration \(k\), define
$H_k\in\{-1,0,\ldots,\Imax\},$ 
where \(H_k=-1\) denotes failure of the whole iteration, while \(H_k=i\ge0\)
denotes success at extrapolation depth \(i\). For \(i=0,\ldots,\Imax\), set
\[
    \Delta_{k,i}:=\gamma^{-i}\Delta_k,
    \qquad
    \pi_i:=\mathbb P(H_k=i\mid\mathcal F_{k-1}),
\]
and also
\[
    \pi_{-1}:=\mathbb P(H_k=-1\mid\mathcal F_{k-1}).
\]
Thus
\[
    \pi_{-1}+\sum_{i=0}^{\Imax}\pi_i=1.
\]
On a successful iteration, let \(m_k^*\) be the accepted direction index, so that
\[
    X_{k+1}
    =
    X_{k,m_k^*}^{(H_k)}
    =
    X_k+\Delta_{k,H_k}D_{k,m_k^*}.
\]
On a failed iteration, \(X_{k+1}=X_k\). The stepsize update is
\[
\Delta_{k+1}
=
\begin{cases}
    \gamma\Delta_k, & H_k=-1,\\[1mm]
    \gamma^{-1}\Delta_k, & H_k=0,\\[1mm]
    \gamma^{-i}\Delta_k, & H_k=i\ge1.
\end{cases}
\]
Recall the maximal difference-estimation error
\[
E_k^{\max}
:=
\max_{1\le m\le \bar{m}}
\max_{0\le i\le \Imax}
\left|
\left(
\tilde f_k^{(-1)}-\tilde f_{k,m}^{(i)}
\right)
-
\left(
f_k^{(-1)}-f_{k,m}^{(i)}
\right)
\right|.
\]

\begin{lemma}[Merit-function drift]
\label{lem:phi_drift_main}
Let Assumption~\ref{ass:quadratic tail} hold. Then, by
Lemma~\ref{ass:postL1}, there exists \(c_{\max}>0\) such that
\[
    \mathbb E\!\left[
        E_k^{\max}
        \,\middle|\,
        \mathcal F_{k-1}
    \right]
    \le
    c_{\max}\Delta_k^\Dexp
    \qquad \text{a.s.}
\]
Assume that
\begin{equation}
\label{ass:eta}
    \eta>
    \frac{c_{\max}}{1-\gamma^\Dexp},
\end{equation}
and
\begin{equation}
\label{ass:theta_eta_compatibility}
    \theta
    \ge
    \eta\max\{1,\gamma^{-\Dexp}-\gamma^\Dexp\}.
\end{equation}
Then
\begin{equation}
\label{eq:drift_uniform}
\mathbb E\!\left[
    \Phi_k-\Phi_{k+1}
    \,\middle|\,
    \mathcal F_{k-1}
\right]
\ge
\left(
    \eta(1-\gamma^\Dexp)-c_{\max}
\right)\Delta_k^\Dexp
>0
\qquad \textup{a.s.}
\end{equation}
\end{lemma}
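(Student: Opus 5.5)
We plan to decompose $\Phi_k-\Phi_{k+1}$ according to the value of $H_k$ and bound it pathwise from below. After that we take conditional expectations and use Lemma~\ref{ass:postL1}. Writing $\Phi_k-\Phi_{k+1}=(f(X_k)-f(X_{k+1}))+\eta(\Delta_k^{\Dexp}-\Delta_{k+1}^{\Dexp})$, the target is the inequality
\[
\Phi_k-\Phi_{k+1}\ \ge\ \eta(1-\gamma^{\Dexp})\Delta_k^{\Dexp}-E_k^{\max}\qquad\text{a.s.}
\]
on each event $\{H_k=i\}$. Taking $\mathbb E[\cdot\mid\mathcal F_{k-1}]$ and invoking Lemma~\ref{ass:postL1} then yields \eqref{eq:drift_uniform} directly. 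Strict positivity follows from \eqref{ass:eta}.

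\emph{Failure case ($H_k=-1$).} Here $X_{k+1}=X_k$ and $\Delta_{k+1}=\gamma\Delta_k$. Hence $\Phi_k-\Phi_{k+1}=\eta(1-\gamma^{\Dexp})\Delta_k^{\Dexp}$, which already dominates the target since $E_k^{\max}\ge0$.

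\emph{Success cases.} If $H_k=i\ge0$, the accepted point $X_{k+1}=X_{k,m_k^*}^{(i)}$ passed the test $\tilde f_k^{(-1)}-\tilde f_{k,m_k^*}^{(i)}\ge\theta\Delta_{k,i}^{\Dexp}$. Replacing estimates by true values costs at most $|E_{k,m_k^*}^{(i)}|\le E_k^{\max}$, so
\[
f(X_k)-f(X_{k+1})\ \ge\ \theta\gamma^{-i\Dexp}\Delta_k^{\Dexp}-E_k^{\max}.
\]
For $i=0$ the stepsize term is $\eta(1-\gamma^{-\Dexp})\Delta_k^{\Dexp}$, giving a total of at least $(\theta+\eta-\eta\gamma^{-\Dexp})\Delta_k^{\Dexp}-E_k^{\max}$. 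The second part of \eqref{ass:theta_eta_compatibility}, $\theta\ge\eta(\gamma^{-\Dexp}-\gamma^{\Dexp})$, makes this at least $\eta(1-\gamma^{\Dexp})\Delta_k^{\Dexp}-E_k^{\max}$. For $i\ge1$ we have $\Delta_{k+1}=\gamma^{-i}\Delta_k$, so the total is at least $\bigl(\theta\gamma^{-i\Dexp}+\eta-\eta\gamma^{-i\Dexp}\bigr)\Delta_k^{\Dexp}-E_k^{\max}$. Since $\theta\ge\eta$, this is $\ge \eta\Delta_k^{\Dexp}-E_k^{\max}$, which exceeds the target. Note that the bound uses only the accepted depth: the tests at levels $0,\dots,i$ all succeeded, but we need just the level-$i$ one.

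The main subtlety is measurability rather than algebra. The index $m_k^*$ and the depth $H_k$ are random and depend on the iteration-$k$ estimates. Consequently we cannot apply Lemma~\ref{lem:diff-L1-from-A44} to $E_{k,m_k^*}^{(H_k)}$ directly. This is exactly why we bound by the maximum $E_k^{\max}$ over the whole (finite, thanks to $\Imax$) search tree: its conditional mean is controlled uniformly by Lemma~\ref{ass:postL1}. Summing the pathwise bound over the disjoint events $\{H_k=i\}$, $i\in\{-1,\dots,\Imax\}$, and taking conditional expectation therefore finishes the proof. We also note that $f_{\min}$ cancels in the difference, and that all quantities are integrable by compactness (Remark~\ref{rem:compact-tested-points}) and Lemma~\ref{lem: conditional mean bound}.
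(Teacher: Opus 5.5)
Your proof is correct and follows essentially the same route as the paper. Both use the same split into true reduction plus stepsize term, the same replacement of the random error $E_{k,m_k^*}^{(H_k)}$ by $E_k^{\max}$, and the same three case comparisons: your failure, $i=0$ and $i\ge 1$ bounds are exactly the paper's $A$, $B\ge A$ and $C_i\ge\eta\ge A$. The only difference is cosmetic: you prove $\Phi_k-\Phi_{k+1}\ge\eta(1-\gamma^{p})\Delta_k^{p}-E_k^{\max}$ pathwise before conditioning, whereas the paper conditions on each event $\{H_k=i\}$ and then combines via the weights $\pi_i$, so your version is slightly more direct.
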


\begin{proof}
All expectations are taken conditionally on $\mathcal F_{k-1}$. We decompose
\[
    \Phi_k-\Phi_{k+1}
    =
    f(X_k)-f(X_{k+1})
    +
    \eta(\Delta_k^\Dexp-\Delta_{k+1}^\Dexp).
\]

\medskip
\noindent
\textbf{Step 1: True-reduction term.}
Fix $i\in[0:\Imax]$. Define
\[
E_{k,m_k^*}^{(i)}
:=
\bigl(\tilde f_k-\tilde f_{k,m_k^*}^{(i)}\bigr)
-
\bigl(f_k-f_{k,m_k^*}^{(i)}\bigr).
\]
Then 
\[
f_k-f_{k,m_k^*}^{(i)}
=
\bigl(\tilde f_k-\tilde f_{k,m_k^*}^{(i)}\bigr)
-
E_{k,m_k^*}^{(i)}.
\]
On $\{H_k=i\}$, the sufficient decrease holds, so
\[
\tilde f_k-\tilde f_{k,m_k^*}^{(i)}
\geq
\theta\Delta_{k,i}^{\Dexp}
.
\]
Therefore
\[
\begin{aligned}
&\mathbf 1_{\{H_k=i\}}
\bigl(f_k-f_{k,m_k^*}^{(i)}\bigr)
\geq
\mathbf 1_{\{H_k=i\}}
\Bigl(
    \theta\Delta_{k,i}^{\Dexp}
    -
    E_{k,m_k^*}^{(i)}
\Bigr).
\end{aligned}
\]
Taking conditional expectations and using
\[
    \Delta_{k,i}^{\Dexp}
    =
    \gamma^{-i\Dexp }\Delta_k^\Dexp,
    \qquad
    \mathbb E[\mathbf 1_{\{H_k=i\}}\mid\mathcal F_{k-1}]
    =
    \pi_i,
\]
we obtain
\[
\begin{aligned}
\mathbb E\!\left[
    \mathbf 1_{\{H_k=i\}}
    \bigl(f_k-f_{k,m_k^*}^{(i)}\bigr)
    \,\middle|\,
    \mathcal F_{k-1}
\right]
\geq
\pi_i\theta\Delta_{k,i}^{\Dexp}
-
\mathbb E\!\left[
    \mathbf 1_{\{H_k=i\}}E_{k,m_k^*}^{(i)}
    \,\middle|\,
    \mathcal F_{k-1}
\right].
\end{aligned}
\]
Since $|E_{k,m_k^*}^{(i)}|\le E_k^{\max}$, it follows that 
\[
\begin{aligned}
&\mathbb E\!\left[
    \mathbf 1_{\{H_k=i\}}
    \bigl(f_k-f_{k,m_k^*}^{(i)}\bigr)
    \,\middle|\,
    \mathcal F_{k-1}
\right]
\ge
\pi_i\theta\Delta_{k,i}^{\Dexp}
-
\mathbb E\!\left[
    \mathbf 1_{\{H_k=i\}}E_k^{\max}
    \,\middle|\,
    \mathcal F_{k-1}
\right].
\end{aligned}
\]
Summing over $i\in[0:\Imax]$ gives
\[
\begin{aligned}
&\sum_{i=0}^{\Imax}
\mathbb E\!\left[
    \mathbf 1_{\{H_k=i\}}
    \bigl(f_k-f_{k,m_k^*}^{(i)}\bigr)
    \,\middle|\,
    \mathcal F_{k-1}
\right]
\ge
\sum_{i=0}^{\Imax}\pi_i\theta\Delta_{k,i}^{\Dexp}
-
\mathbb E\!\left[
    \mathbf 1_{\{H_k\ge0\}}E_k^{\max}
    \,\middle|\,
    \mathcal F_{k-1}
\right].
\end{aligned}
\]
By Lemma~\ref{ass:postL1},
\[
\mathbb E\!\left[
    \mathbf 1_{\{H_k\ge0\}}E_k^{\max}
    \,\middle|\,
    \mathcal F_{k-1}
\right]
\le
\mathbb E[E_k^{\max}\mid\mathcal F_{k-1}]
\le
c_{\max}\Delta_k^\Dexp.
\]
Hence
\begin{equation}\label{eq:true_red_explicit_2}
\sum_{i=0}^{\Imax}
\mathbb E\!\left[
    \mathbf 1_{\{H_k=i\}}
    \bigl(f_k-f_{k,m_k^*}^{(i)}\bigr)
    \,\middle|\,
    \mathcal F_{k-1}
\right]
\ge
\left(
    \sum_{i=0}^{\Imax}\pi_i\theta\gamma^{-\Dexp i}
    -
    c_{\max}
\right)
\Delta_k^\Dexp .
\end{equation}

\medskip
\noindent
\textbf{Step 2: Stepsize term.}
By the modified stepsize update,
\[
\Delta_{k+1}
=
\mathbf 1_{\{H_k=-1\}}(\gamma\Delta_k)
+
\mathbf 1_{\{H_k=0\}}(\gamma^{-1}\Delta_k)
+
\sum_{i=1}^{\Imax}
\mathbf 1_{\{H_k=i\}}(\gamma^{-i}\Delta_k).
\]
Therefore
\[
\begin{aligned}
\eta(\Delta_k^\Dexp-\Delta_{k+1}^\Dexp)
={}&
\eta\Delta_k^\Dexp
\Bigg[
    \mathbf 1_{\{H_k=-1\}}(1-\gamma^\Dexp)
    +
    \mathbf 1_{\{H_k=0\}}(1-\gamma^{-\Dexp})+
\\
&\qquad\qquad
    +
    \sum_{i=1}^{\Imax}
    \mathbf 1_{\{H_k=i\}}
    \bigl(1-\gamma^{-\Dexp i}\bigr)
\Bigg].
\end{aligned}
\]
Taking conditional expectations yields
\begin{equation}\label{eq:stepsize}
\begin{aligned}
\mathbb E\!\left[
    \eta(\Delta_k^\Dexp-\Delta_{k+1}^\Dexp)
    \,\middle|\,
    \mathcal F_{k-1}
\right]
={}&
\eta\Delta_k^\Dexp
\Bigg[
    \pi_{-1}(1-\gamma^\Dexp)
    +
    \pi_0(1-\gamma^{-\Dexp})+
\\
&\qquad\qquad
    +
    \sum_{i=1}^{\Imax}
    \pi_i(1-\gamma^{-\Dexp i})
\Bigg].
\end{aligned}
\end{equation}

\medskip
\noindent
\textbf{Step 3: Combining the two estimates.}
Combining~\eqref{eq:true_red_explicit_2} and~\eqref{eq:stepsize}, and recalling that the
true-reduction term is zero on $\{H_k=-1\}$, we obtain

\begin{equation}\label{eq:drift_full}
\begin{aligned}
\mathbb E\!\left[\Phi_k-\Phi_{k+1}\,\middle|\,\mathcal F_{k-1}\right]
&\ge \Bigg[\pi_{-1}\eta(1-\gamma^\Dexp)
+\pi_0\Bigl(\theta+\eta(1-\gamma^{-\Dexp})\Bigr)+ \\
&\qquad
+\sum_{i=1}^{\Imax}\pi_i
\Bigl(\theta\gamma^{-\Dexp i}
+\eta(1-\gamma^{-\Dexp i})\Bigr)
-c_{\max}\Bigg]\Delta_k^\Dexp .
\end{aligned}
\end{equation}
Define
\[
    A:=\eta(1-\gamma^\Dexp), \quad 
    B:=\theta+\eta(1-\gamma^{-\Dexp})
      =\theta-\eta(\gamma^{-\Dexp}-1),
\]
and, for $i\ge1$,
\[
    C_i
    :=
    \theta\gamma^{-\Dexp i}
    +
    \eta(1-\gamma^{-\Dexp i})
    =
    \eta+(\theta-\eta)\gamma^{-\Dexp i}.
\]
Then, under~\eqref{ass:theta_eta_compatibility},
\[
    B\ge A,
    \qquad
    C_i\ge A\quad \forall i\ge1.
\]
\noindent
Indeed, $B\ge A$ is equivalent to
\[
    \theta-\eta(\gamma^{-\Dexp}-1)
    \ge
    \eta(1-\gamma^\Dexp),
\]
that is,
\[
    \theta\ge \eta(\gamma^{-\Dexp}-\gamma^\Dexp),
\]
which follows from~\eqref{ass:theta_eta_compatibility}. Moreover,
\eqref{ass:theta_eta_compatibility} also implies $\theta\ge\eta$. Hence, for $i\ge1$,
\[
    C_i
    =
    \eta+(\theta-\eta)\gamma^{-\Dexp i}
    \ge
    \eta
    \ge
    \eta(1-\gamma^\Dexp)
    =
    A.
\]
Therefore
\[
    \pi_{-1}A+\pi_0B+\sum_{i=1}^{\Imax}\pi_iC_i
    \ge
    \left(\pi_{-1}+\pi_0+\sum_{i=1}^{\Imax}\pi_i\right)A
    =
    A.
\]
Using this in~\eqref{eq:drift_full}, we obtain
\[
\mathbb E\!\left[
    \Phi_k-\Phi_{k+1}
    \,\middle|\,
    \mathcal F_{k-1}
\right]
\ge
\left(
    A-c_{\max}
\right)\Delta_k^\Dexp.
\]
Substituting $A=\eta(1-\gamma^\Dexp)$ gives
\[
\mathbb E\!\left[
    \Phi_k-\Phi_{k+1}
    \,\middle|\,
    \mathcal F_{k-1}
\right]
\ge
\left(
    \eta(1-\gamma^\Dexp)-c_{\max}
\right)\Delta_k^\Dexp.
\]
By~\eqref{ass:eta}, the coefficient is strictly positive. This concludes the proof.
\end{proof}

\begin{lemma}\label{l:radius0}
Under the hypotheses of Lemma~\ref{lem:phi_drift_main},
it holds
\[
    \sum_{k=0}^{\infty}\Delta_k^{\Dexp}<\infty
    \qquad \text{a.s.}
\]
In particular,
\[
    \Delta_k\to0
    \qquad \text{a.s. as } k\to\infty .
\]
\end{lemma}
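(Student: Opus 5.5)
The plan is to turn the uniform drift bound of Lemma~\ref{lem:phi_drift_main} into a summability statement via telescoping and monotone convergence, and then read off \(\Delta_k\to0\). Set \(\kappa:=\eta(1-\gamma^\Dexp)-c_{\max}>0\). Then~\eqref{eq:drift_uniform} reads
\[
\mathbb E\!\left[\Phi_{k+1}\,\middle|\,\mathcal F_{k-1}\right]\le \Phi_k-\kappa\Delta_k^\Dexp \qquad\text{a.s.}
\]
First we check that \(\Phi_k\) is nonnegative and integrable. Since \(f_{\min}\) is a lower bound for \(f\) on the compact set \({\mathcal C}_{enl}\) containing all iterates (Assumption~\ref{ass:compact} and Remark~\ref{rem:compact-tested-points}), we have \(\Phi_k\ge0\). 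Moreover \(f\) is continuous, hence bounded on \({\mathcal C}_{enl}\), and \(\Delta_k\le\bar\Delta\) by Remark~\ref{rem:compact-tested-points}. So \(\Phi_k\) is bounded by a deterministic constant, and \(\Phi_0\) is deterministic and finite.

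Next, take total expectations (tower property, since \(X_k,\Delta_k\) are \(\mathcal F_{k-1}\)-measurable) and sum from \(k=0\) to \(K\):
\[
\kappa\,\mathbb E\!\left[\sum_{k=0}^{K}\Delta_k^\Dexp\right]\le \mathbb E[\Phi_0]-\mathbb E[\Phi_{K+1}]\le \Phi_0<\infty .
\]
Letting \(K\to\infty\) and applying monotone convergence gives \(\mathbb E\bigl[\sum_{k\ge0}\Delta_k^\Dexp\bigr]\le\Phi_0/\kappa<\infty\). Hence \(\sum_k\Delta_k^\Dexp<\infty\) almost surely, so \(\Delta_k^\Dexp\to0\) a.s., and since \(\Dexp>1>0\), also \(\Delta_k\to0\) a.s.

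As an alternative route, one could invoke the Robbins--Siegmund supermartingale convergence theorem directly on \(\Phi_k\), but the expectation argument above suffices and avoids any filtration subtlety.

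The one point needing care is the filtration mismatch. The drift is stated conditionally on \(\mathcal F_{k-1}\), while \(\Phi_{k+1}\) depends on the directions sampled at iteration \(k+1\) only through nothing: \(X_{k+1}\) and \(\Delta_{k+1}\) are \(\mathcal G_k\)-measurable. Taking total expectations therefore removes the issue entirely. Integrability also needs checking, which is why we rely on the compactness bounds rather than any unboundedness-tolerant argument.
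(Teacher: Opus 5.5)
Your proposal is correct and follows essentially the same route as the paper: take total expectations in the drift inequality, telescope using $\Phi_{K+1}\ge 0$, apply monotone convergence to obtain $\mathbb E\bigl[\sum_k \Delta_k^{p}\bigr]<\infty$, and conclude almost-sure summability and hence $\Delta_k\to 0$. Your explicit check that $\Phi_k$ is bounded, and hence integrable, via the compactness bounds of Remark~\ref{rem:compact-tested-points} is a small extra detail that the paper leaves implicit.
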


\begin{proof}
Define \(\Theta := \eta(1-\gamma^{\Dexp})-c_{\max}, \;\Theta>0\). Taking total expectations in the drift inequality gives
\[
    \mathbb E[\Phi_k-\Phi_{k+1}]
    \ge
    \Theta\,\mathbb E[\Delta_k^{\Dexp}]
    \qquad \forall k\in\mathbb N_0 .
\]
Summing from $k=0$ to $N$ yields
\[
    \Theta\sum_{k=0}^{N}\mathbb E[\Delta_k^{\Dexp}]
    \le
    \sum_{k=0}^{N}\mathbb E[\Phi_k-\Phi_{k+1}]
    =
    \mathbb E[\Phi_0-\Phi_{N+1}] .
\]
Since
\[
    \Phi_{N+1}
    =
    f_{N+1}-f_{\min}
    +
    \eta\Delta_{N+1}^{\Dexp}
    \ge 0,
\]
we obtain
\[
    \Theta\sum_{k=0}^{N}\mathbb E[\Delta_k^{\Dexp}]
    \le
    \mathbb E[\Phi_0].
\]
Therefore,
\[
    \sum_{k=0}^{N}\mathbb E[\Delta_k^{\Dexp}]
    \le
    \frac{\mathbb E[\Phi_0]}{\Theta}
    \qquad \forall N\in\mathbb N_0 .
\]
Letting $N\to\infty$ and using monotone convergence,
\[
    \mathbb E\!\left[
        \sum_{k=0}^{\infty}\Delta_k^{\Dexp}
    \right]
    =
    \sum_{k=0}^{\infty}\mathbb E[\Delta_k^{\Dexp}]
    \le
    \frac{\mathbb E[\Phi_0]}{\Theta}
    <
    \infty .
\]
Since the random variable
\(
    \sum_{k=0}^{\infty}\Delta_k^{\Dexp}
\)
is nonnegative and has finite expectation, it is finite almost surely. Hence
\(
    \sum_{k=0}^{\infty}\Delta_k^{\Dexp}<\infty
\)
almost surely.
Finally, since $\Delta_k^{\Dexp}\ge0$ and $p>1$, summability implies
\(
    \Delta_k^{\Dexp}\to0
    \ \ \text{a.s.}
\)
and therefore
\(
    \Delta_k\to0
    \ \ \text{a.s.}
\)
because $\Dexp>0$.
\end{proof}
\noindent
Thus, from the merit-function drift we have
\(\sum_{k=0}^\infty \Delta_k^{\Dexp}<\infty\) a.s., hence \(\Delta_k\to0\) along the sequence almost surely. This in turn guarantees the existence of a subsequence of unsuccessful iterations.
\begin{lemma}[Existence of infinitely many unsuccessful iterations] \label{lem:inf-unsuccessful}
Let
\[
    \mathcal U
    :=
    \{k\in\mathbb N_0 : \text{iteration } k \text{ is unsuccessful}\}
\]
be the random set of indices of unsuccessful iterations. Assume that
\[
    \sum_{k=0}^{\infty}\Delta_k^{\Dexp}<\infty
    \qquad \text{a.s.}
\]
Then, with probability one, the set $\mathcal U$ is infinite.
\end{lemma}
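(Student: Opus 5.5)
We argue pathwise by contradiction, using only the deterministic stepsize update rule. Fix an outcome $\omega$ in the probability-one event on which $\sum_{k}\Delta_k^{\Dexp}<\infty$. By Lemma~\ref{l:radius0}, this also gives $\Delta_k\to0$ along the realization. We will show that on this event $\mathcal U(\omega)$ cannot be finite. The statement then holds almost surely.

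Suppose instead that $\mathcal U(\omega)$ is finite. Then there is an index $K$ such that every iteration $k\ge K$ is successful, i.e.\ $H_k\ge0$. The update rule gives, on every successful iteration,
\[
\Delta_{k+1}=\max\{\Delta_k^{\rm acc},\gamma^{-1}\Delta_k\}\ge\gamma^{-1}\Delta_k .
\]
Iterating from $K$ yields
\[
\Delta_{k}\ge\gamma^{-(k-K)}\Delta_K \qquad \text{for all } k\ge K .
\]
We also need $\Delta_K>0$. This holds because $\Delta_0>0$ is deterministic (Assumption~\ref{ass:compact}) and each update multiplies the stepsize by a positive factor ($\gamma$ or $\gamma^{-H_k}$). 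Since $\gamma\in(0,1)$, we conclude $\Delta_k\to\infty$.

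This is a contradiction with $\Delta_k\to0$, and equally with the summability of $\Delta_k^{\Dexp}$, since the terms do not even tend to zero. Hence $\mathcal U(\omega)$ is infinite for every $\omega$ in the full-measure event. An alternative route to the same contradiction is the uniform bound $\Delta_k\le\bar\Delta$ from Remark~\ref{rem:compact-tested-points}, which is incompatible with geometric growth. Either route suffices.

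The main obstacle is only a bookkeeping point. The contradiction must be carried out realization-wise on the full-measure set, and we must record that the stepsize never vanishes, so that the geometric lower bound is nontrivial. No probabilistic estimates beyond the hypothesis are needed.
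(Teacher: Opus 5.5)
Your proof is correct and matches the paper's argument: if only finitely many iterations are unsuccessful, every later update multiplies $\Delta_k$ by at least $\gamma^{-1}$. The tail of $\Delta_k^{\Dexp}$ therefore stays bounded away from zero (you even get geometric growth), which contradicts summability. Two small remarks: you do not need Lemma~\ref{l:radius0}, whose hypotheses are stronger than this statement's, because summability alone forces $\Delta_k\to0$; and your alternative route via $\bar\Delta$ relies on Assumption~\ref{ass:compact}, which this lemma does not assume.
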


\begin{proof}
Argue by contradiction. Suppose that, with strictly positive probability,
there are only finitely many unsuccessful iterations. On this event, there
exists a finite random index $k^u$ such that every iteration $k\ge k^u$ is
successful. By the stepsize update rule, on a successful iteration we have
\[
\Delta_{k+1}
=
\begin{cases}
    \gamma^{-1}\Delta_k, & H_k=0,\\[1mm]
    \gamma^{-H_k}\Delta_k, & H_k\ge1.
\end{cases}
\]
Since $0<\gamma<1$, it follows in both cases that
\[
    \Delta_{k+1}\ge \gamma^{-1}\Delta_k>\Delta_k .
\]
In particular, on the event under consideration, the tail sequence
$\{\Delta_k\}_{k\ge k^u}$ is strictly increasing. Since $\Delta_{k^u}>0$, we get
\[
    \Delta_k\ge \Delta_{k^u}>0
    \qquad \forall k\ge k^u .
\]
Therefore
\[
    \sum_{k=k^u}^{\infty}\Delta_k^{\Dexp}
    \ge
    \sum_{k=k^u}^{\infty}\Delta_{k^u}^{\Dexp}
    =
    \infty,
\]
which contradicts the assumption
\[
    \sum_{k=0}^{\infty}\Delta_k^{\Dexp}<\infty
    \qquad \text{a.s.}
\]
Hence the probability that $\mathcal U$ is finite must be zero. Therefore
$\mathcal U$ is infinite almost surely.
\end{proof}
\noindent
It is on these unsuccessful iterations that our convergence analysis will concentrate.\\
We say that a subsequence $\{x_k\}_{k \in L}$ is refining if it converges to $x^*$ and if $\{d_k\}_{k \in L}$ is dense in the unit sphere. We will show that under \eqref{ass:eta}, \eqref{ass:theta_eta_compatibility}, if Assumptions  \ref{ass:compact}, \ref{ass:indep}, \ref{ass:quadratic tail} hold, then with probability 1 all refining subsequences converge to a Clarke stationary point. We start by recalling the definition of Clarke generalized directional derivative  and Clarke subdifferential.
\begin{definition}[Clarke generalized directional derivative]
Let $f:\mathbb{R}^n\to\mathbb{R}$ be Lipschitz near $x$.  Its \emph{Clarke generalized directional derivative} at $x$ in the direction $h$ is
\[
  f^\circ(x;h)
  :=
  \limsup_{\substack{y\to x\\t\downarrow0}}
    \frac{f(y + t\,h) - f(y)}{t}.
\]
\end{definition}
\begin{definition}[Clarke subdifferential]
Under the same hypotheses, the \emph{Clarke subdifferential} of $f$ at $x$ is
\[
  \partial_C f(x)
  := \bigl\{v\in\mathbb{R}^n : f^\circ(x;h)\;\ge\;\langle v,\,h\rangle
    \quad\forall\,h\in\mathbb{R}^n\bigr\}.
\]
Equivalently, one shows
\[
  \partial_C f(x)
  = \mathrm{conv}\Bigl\{
      \lim_{i\to\infty}\nabla f(x_i):
      x_i\to x,\ f\text{ is differentiable at }x_i
    \Bigr\}.
\]
\end{definition}
\begin{definition}[Clarke-stationary point]\label{def:clarke-stationary}
Let $f:\mathbb{R}^n\to\mathbb{R}$ be locally Lipschitz and let $\partial_{C} f(x)$ denote its
Clarke subdifferential. A point $x^\star\in\mathbb{R}^n$ is \emph{Clarke-stationary} if and only if $0 \in \partial_{C} f(x^\star).$
Equivalently, the Clarke directional derivative satisfies
\[
f^{\circ}(x^\star; d)\ \ge\ 0 \qquad \text{for all } d\in\mathbb{R}^n.
\]
\end{definition}
Now some preliminary results. The next lemma extends Assumption \ref{ass:quadratic tail} to the case where $\alpha$ is a random variable.
\begin{lemma}\label{l:rv}
Let \(A\) be an \(\mathcal F_{k-1}\)-measurable random variable such that
\(A\ge\varepsilon_h\) almost surely. If
Assumption~\ref{ass:quadratic tail} holds, then, for every
\(i\in[0:\Imax]\) and \(m\in[1:\bar m]\), almost surely
\begin{equation*}
\begin{aligned}
    \mathbb P\left(
        |\f_k^{(-1)}-f_k^{(-1)}|
        \ge
        A\Delta_k^{\Dexp}
        \,\middle|\,
        \mathcal F_{k-1}
    \right)
    &\le
    \frac{\varepsilon_h}{A^{\aexp}},
    \\
    \mathbb P\!\left(
|\tilde f_{k,m}^{(i)}-f_{k,m}^{(i)}|
\ge
A\,\Delta_k^{\Dexp}
\ \middle|\ 
\F_{k-1}
\right)
&\le
\frac{\varepsilon_h}{A^{\aexp}}.
\end{aligned}
\end{equation*}
\end{lemma}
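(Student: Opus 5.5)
The plan is the standard "freezing" argument for conditional probabilities: approximate the $\mathcal F_{k-1}$-measurable level $A$ by simple $\mathcal F_{k-1}$-measurable random variables, apply Assumption~\ref{ass:quadratic tail} on each piece where the level is deterministic, and pass to the limit. We treat the base-point error $\bar E_k^{(-1)}$ and the tree errors $\bar E_{k,m}^{(i)}$ identically, so we fix one of them and write $\bar E$ for it.

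\textbf{Step 1: countably-valued levels.} Let $A'$ be $\mathcal F_{k-1}$-measurable, taking values in a countable set $\{a_j\}$ with every $a_j\ge\varepsilon_h$, and set $B_j:=\{A'=a_j\}\in\mathcal F_{k-1}$. Since $\mathbf 1_{B_j}$ is $\mathcal F_{k-1}$-measurable, we have a.s.
\[
\mathbb P\bigl(|\bar E|\ge A'\Delta_k^{\Dexp}\,\big|\,\mathcal F_{k-1}\bigr)
=\sum_j \mathbf 1_{B_j}\,\mathbb P\bigl(|\bar E|\ge a_j\Delta_k^{\Dexp}\,\big|\,\mathcal F_{k-1}\bigr)
\le \sum_j \mathbf 1_{B_j}\frac{\varepsilon_h}{a_j^{\aexp}}
=\frac{\varepsilon_h}{(A')^{\aexp}}.
\]
The equality uses $\mathbf 1_{\{|\bar E|\ge A'\Delta_k^p\}}=\sum_j\mathbf 1_{B_j}\mathbf 1_{\{|\bar E|\ge a_j\Delta_k^p\}}$ together with monotone convergence for conditional expectations. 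The inequality uses Assumption~\ref{ass:quadratic tail} with the deterministic value $\alpha=a_j\ge\varepsilon_h$. Only countably many null sets are discarded, which is harmless.

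\textbf{Step 2: approximation from below.} Define the dyadic lower approximation $A_\ell:=\max\{\varepsilon_h,\,2^{-\ell}\lfloor 2^\ell A\rfloor\}$. Each $A_\ell$ is $\mathcal F_{k-1}$-measurable, countably valued and at least $\varepsilon_h$, and $A_\ell\uparrow A$ pointwise. Since $A_\ell\le A$, we have $\{|\bar E|\ge A\Delta_k^p\}\subseteq\{|\bar E|\ge A_\ell\Delta_k^p\}$, so monotonicity of conditional probability together with Step 1 gives
\[
\mathbb P\bigl(|\bar E|\ge A\Delta_k^{\Dexp}\mid\mathcal F_{k-1}\bigr)\le \frac{\varepsilon_h}{A_\ell^{\aexp}}\quad\text{a.s. for every }\ell.
\]
Letting $\ell\to\infty$, continuity of $t\mapsto t^{-\Dexp/(\Dexp-1)}$ on $[\varepsilon_h,\infty)$ yields the claimed bound. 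The countable intersection over $\ell$ of the a.s.\ events preserves almost-sure validity.

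\textbf{Main obstacle.} The only delicate point is to approximate from below. That choice makes the event inclusion go in the correct direction, so no continuity of the conditional distribution function of $|\bar E|$ is needed, while keeping every $A_\ell\ge\varepsilon_h$ so that the assumption still applies. An alternative is to invoke a regular conditional distribution of $\bar E$ given $\mathcal F_{k-1}$ and substitute $\alpha=A(\omega)$ directly. However, this requires that the tail bound hold for all $\alpha$ simultaneously outside a single null set, which itself needs a countable-dense-set argument plus right-continuity. The dyadic approach avoids that and stays elementary.
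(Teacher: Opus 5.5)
Your proof is correct and follows essentially the same route as the paper: approximate $A$ from below by $\mathcal F_{k-1}$-measurable thresholds that take few values and are at least $\varepsilon_h$ (simple ones in the paper, countably-valued dyadic ones for you). You then apply Assumption~\ref{ass:quadratic tail} on each level set and pass to the limit. The differences are cosmetic: the paper checks the inequality against events $G\in\mathcal F_{k-1}$ and invokes dominated convergence, whereas you pull the $\mathcal F_{k-1}$-measurable indicators out of the conditional expectation directly and need only the inclusion $\{|\bar E|\ge A\Delta_k^{\Dexp}\}\subseteq\{|\bar E|\ge A_\ell\Delta_k^{\Dexp}\}$, which is slightly cleaner.
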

\begin{proof}
Similar to \cite[Lemma~2.3]{MR4758401}. It is enough to test the desired conditional inequality against arbitrary
events $G\in\mathcal F_{k-1}$. For simple thresholds
$A=\sum_j a_j\mathbf 1_{G_j}$, with $G_j\in\mathcal F_{k-1}$ and
$a_j\ge\varepsilon_h$, the claim follows by applying
Assumption~\ref{ass:quadratic tail} on each $G\cap G_j$. The general case
$A\ge\varepsilon_h$ follows by taking simple
$\mathcal F_{k-1}$-measurable thresholds $A_n$ such that
$\varepsilon_h\le A_n\le A$ and $A_n\uparrow A$, and then passing to the limit
by dominated convergence.
\end{proof}
To prove $0\in\partial_C f(\bar x)$ at a limit point $\bar x$ of $\{X_k\}$, we proceed in two steps. First, we show that along the (infinite) set $\mathcal{U}$ of \emph{unsuccessful} iterations the forward
difference in the sampled direction is asymptotically nonnegative,
\[
\liminf_{k\in \mathcal{U},\ k\to\infty}\ \frac{f(X_k+\Delta_k D_k)-f(X_k)}{\Delta_k}\ \ge\ 0\quad\text{a.s.}
\]
which is Lemma~\ref{lem:unsuccessful-dir-nonneg} below. 
Second, we exploit that the distribution of $D_k$ has \emph{dense support} on the unit sphere: for any fixed unit vector $u$, there exist indices $k_j\in \mathcal{U}$ with $D_{k_j}\to u$ and $\Delta_{k_j}\to 0$. Passing to the limit and using the outer semicontinuity of the Clarke directional derivative then yields $f^\circ(\bar x;u)\ge 0$ for every $u$, which is equivalent to $0\in\partial_C f(\bar x)$. The next lemma establishes the first step.
\begin{lemma}\label{lem:unsuccessful-dir-nonneg}
Let $\mathcal{U}$ be the (random) set of indices of unsuccessful iterations. Under
Assumptions \ref{ass:compact}, \ref{ass:indep}, \ref{ass:quadratic tail}, and the stepsize/merit
conditions ensuring $\sum_k \EE[\Delta_k^{\Dexp}]<\infty$ (hence $\Delta_k\to 0$ a.s.), we have a.s.
\[
\liminf_{k\in \mathcal{U},\,k\to\infty}\ \frac{f(X_k+\Delta_k D_k)-f(X_k)}{\Delta_k}\ \ge\ 0.
\]
\end{lemma}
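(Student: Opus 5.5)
On an unsuccessful iteration $k\in\mathcal U$, every sampled direction fails already at depth $0$. In particular, for the direction $D_k$ (the last tested direction, cf.\ Algorithm~\ref{alg:dse2-cartis}) we have
\[
\tilde f_k^{(-1)}-\tilde f_{k,m}^{(0)}<\theta\Delta_k^{\Dexp}.
\]
Writing the estimated difference as the true difference plus the error $E_{k,m}^{(0)}$, and using $|E_{k,m}^{(0)}|\le E_k^{\max}$, this gives
\[
\frac{f(X_k+\Delta_kD_k)-f(X_k)}{\Delta_k}
> -\theta\Delta_k^{\Dexp-1}-\frac{E_k^{\max}}{\Delta_k}.
\]
Since $\Dexp>1$ and $\Delta_k\to0$ a.s.\ by Lemma~\ref{l:radius0}, the first term vanishes. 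It therefore suffices to prove
\[
\frac{E_k^{\max}}{\Delta_k}\to0 \quad\text{a.s. along the full sequence,}
\]
and hence along $\mathcal U$.

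To control the error term, I would compute
\[
\mathbb E\!\left[\frac{E_k^{\max}}{\Delta_k^{\Dexp}}\,\Delta_k^{\Dexp-1}\cdot\frac{1}{\Delta_k^{\Dexp-1}}\right],
\]
i.e.\ show $\sum_k \mathbb E[\min\{1,E_k^{\max}/\Delta_k\}]<\infty$ or use Borel--Cantelli directly. Fix $\epsilon>0$ and set $A_k:=\epsilon\Delta_k^{1-\Dexp}$, an $\mathcal F_{k-1}$-measurable threshold. The event $\{E_k^{\max}\ge\epsilon\Delta_k\}$ requires some single-point error (base or tree) to be at least $\tfrac{\epsilon}{2}\Delta_k=\tfrac{A_k}{2}\Delta_k^{\Dexp}$. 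Apply Lemma~\ref{l:rv} with $A=\max\{\varepsilon_h,A_k/2\}$, which is legitimate since on $\{A_k/2<\varepsilon_h\}$ we simply bound the probability by $1$. A union bound over the $1+\bar m(\Imax+1)$ estimates then yields
\[
\mathbb P(E_k^{\max}\ge\epsilon\Delta_k\mid\mathcal F_{k-1})
\le C\,\varepsilon_h\left(\frac{2}{\epsilon}\right)^{\aexp}\Delta_k^{(\Dexp-1)\cdot\aexp}
=C'_\epsilon\,\Delta_k^{\Dexp},
\]
on the event $A_k/2\ge\varepsilon_h$.

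The bound is then summed using the conditional Borel--Cantelli lemma (Lévy's extension). Because $\sum_k\Delta_k^{\Dexp}<\infty$ a.s., the conditional probabilities are summable a.s. Since $\Delta_k\to0$, the case $A_k/2<\varepsilon_h$ occurs only finitely often a.s., so the union bound applies for all large $k$. Alternatively, the unconditional Borel--Cantelli lemma can be used after taking expectations, via $\sum_k\mathbb E[\Delta_k^{\Dexp}]<\infty$ from the proof of Lemma~\ref{l:radius0}, treating the small-$A_k$ indices with a truncation $\mathbf 1_{\{\Delta_k\le\delta\}}$. Either way, $E_k^{\max}<\epsilon\Delta_k$ eventually a.s. Taking $\epsilon=1/j$ and a countable intersection over $j$ gives $E_k^{\max}/\Delta_k\to0$ a.s., and the $\liminf$ claim follows.

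The main obstacle is the exponent bookkeeping and the handling of the threshold constraint $\alpha\ge\varepsilon_h$. The exponent choice $\aexp$ in Assumption~\ref{ass:quadratic tail} is exactly what makes $(\Dexp-1)\cdot\Dexp/(\Dexp-1)=\Dexp$, so the tail bound matches the summable quantity $\Delta_k^{\Dexp}$. One must also be careful that $D_k$ on failed iterations is measurable and tested at depth $0$. If the algorithm's convention for $D_k$ on failure were ambiguous, bounding via $E_k^{\max}$ over all directions avoids the issue.
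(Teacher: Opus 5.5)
Your proof is correct and is essentially the paper's argument: a tail bound at an $\mathcal F_{k-1}$-measurable threshold proportional to $\Delta_k^{1-\Dexp}$ gives error probabilities of order $\Delta_k^{\Dexp}$, which are summable. Borel--Cantelli then makes the relevant estimation errors $o(\Delta_k)$, and the failed level-$0$ test yields the $\liminf$ bound.

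The differences are cosmetic. You union-bound over the whole tree via $E_k^{\max}$ instead of using only the two depth-$0$ estimates of $D_k$. You also handle the constraint $\alpha\ge\varepsilon_h$ explicitly, whereas the paper glosses over it by summing the bound over every $k$. Your side remark about $\sum_k\mathbb E[\min\{1,E_k^{\max}/\Delta_k\}]<\infty$ would not follow from the tail bound, which only gives terms of order $\Delta_k^{\Dexp-1}$, but you never use it.
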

\begin{proof}
Fix $l\in\mathbb N$. On an unsuccessful outer iteration $k\in \mathcal{U}$, the inner extrapolation does not
start and for every tested direction $D_{k,m}$, with $m\in[1:\bar{m}]$, the level $i=0$ sufficient-decrease
test fails. Since, on unsuccessful iterations, no direction is accepted, we define the
direction used in the refining subsequence as the last tested direction,
namely $D_k:=D_{k,\bar{m}}$. Hence
$$
\tilde f_k^{(-1)}=\tilde f(X_k),
\qquad
\tilde f_k^{(0)}=\tilde f(X_k+\Delta_kD_k).
$$
Because the iteration is unsuccessful, the sufficient-decrease test fails at
level $i=0$ for this direction. Therefore,
\begin{equation}\label{eq:fail}
\tilde f_k^{(0)}-\tilde f_k^{(-1)}
>
-\theta\Delta_k^{\Dexp}.
\end{equation}
Apply Assumption~\ref{ass:quadratic tail} with the $\F_{k-1}$-measurable threshold
$\Upsilon_k:=\frac{1}{2l}\Delta_k^{1-\Dexp}$ (note $\Upsilon_k\ge \varepsilon_h$ for all $k$ large enough since
$\Delta_k\to 0$ a.s.). Conditionally on $\F_{k-1}$,
\[
\Pb\!\left(\,|\tilde f_k^{(i)}-f_k^{(i)}|\ \ge\ \frac{\Delta_k}{2l}\ \Big|\ \F_{k-1}\right)
\ \le\ \frac{\varepsilon_h}{\Upsilon_k^{\aexp}}
\ =\ \varepsilon_h\,(2l)^{\aexp}\,\Delta_k^{\Dexp},
\  i\in\{-1,0\}.
\]
Taking expectations and summing (Tonelli) over $k$ and $i\in\{-1,0\}$ gives
\[
\sum_{k\ge 0}\ \sum_{i\in\{-1,0\}}\Pb\!\left(|\tilde f_k^{(i)}-f_k^{(i)}|\ \ge\ \frac{\Delta_k}{2l}\right)
\ \le\ \varepsilon_h (2l)^{\aexp}\ \EE\!\left[\sum_{k\ge 0}\Delta_k^{\Dexp}\right]\ <\ \infty.
\]
By Borel--Cantelli (first lemma), there is an a.s.-finite random index $\bar{k}$ such that for all
$k\ge \bar{k}$ (in particular, for all large $k\in \mathcal{U}$),
\begin{equation}\label{eq:err-small}
|\tilde f_k^{(i)}-f_k^{(i)}|\ \le\ \frac{\Delta_k}{2l},\qquad i\in\{-1,0\}.
\end{equation}
For $k\in \mathcal{U}$ and $k\ge \bar{k}$, decompose
\begin{align*}
f(X_k+\Delta_k D_k)-f(X_k)
&=\big(f(X_k+\Delta_k D_k)-\tilde f_k^{(0)}\big)
 +\big(\tilde f_k^{(0)}-\tilde f_k^{(-1)}\big)
 +\big(\tilde f_k^{(-1)}-f(X_k)\big) \\
&\ge -\frac{\Delta_k}{2l}\ +\ \big(\tilde f_k^{(0)}-\tilde f_k^{(-1)}\big)\ -\ \frac{\Delta_k}{2l}
\qquad\text{by \eqref{eq:err-small}} \\
&\ge -\,\theta\,\Delta_k^{\Dexp}\ -\ \frac{\Delta_k}{l}
\qquad\text{by \eqref{eq:fail}}.
\end{align*}

Dividing by $\Delta_k>0$ yields, for all large $k\in \mathcal{U}$,
\[
\frac{f(X_k+\Delta_k D_k)-f(X_k)}{\Delta_k}\ \ge\ -\,\theta\,\Delta_k^{\Dexp-1}\ -\ \frac{1}{l}.
\]
Since $\Delta_k\to 0$ a.s., we obtain a.s.
\[
\liminf_{k\in \mathcal{U},\,k\to\infty}\ \frac{f(X_k+\Delta_k D_k)-f(X_k)}{\Delta_k}
\ \ge\ -\,\frac{1}{l}.
\]
As $l\in\mathbb N$ was arbitrary, letting $l\to\infty$ gives the claim.
\end{proof}
We now report the main convergence result for our stochastic direct-search scheme. The result requires the existence of accumulation points for the sequence $\left\{x_{k}\right\}$, which can be obtained assuming that the iterates generated by the algorithm lie in a compact set (see Assumption \ref{ass:compact}). We omit the proof, as it follows verbatim from \cite[Theorem 3.3]{MR4758401}.
\begin{theorem}
Assume that \(f\) is Lipschitz continuous in a
neighborhood of every limit point of the sequence of iterates
\(\{X_k\}\). Let \(\mathcal U\) denote the random set of indices of
unsuccessful iterations. Suppose that
Assumptions~\ref{ass:compact}, \ref{ass:indep},
\ref{ass:quadratic tail}, and conditions~\eqref{ass:eta} and
\eqref{ass:theta_eta_compatibility} hold.
Then, with probability one, the following implication holds: if
\(V\subseteq\mathcal U\) is a random index set such that
\(\{D_k\}_{k\in V}\) is dense in \(\mathbb S^{n-1}\) and
\(
    \lim_{\substack{k\to\infty\\ k\in V}} X_k = X^*,
\)
then \(X^*\) is Clarke stationary, namely,
\(
    f^\circ(X^*;d)\ge0
\) for every $d\in\mathbb R^n$.
\end{theorem}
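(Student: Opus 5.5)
We argue on a single full-probability event and then follow the classical deterministic argument of \cite[Theorem 3.3]{MR4758401}, with Lemma~\ref{lem:unsuccessful-dir-nonneg} playing the role of the stochastic ingredient. Let $\Omega_0$ be the intersection of the almost-sure events of Lemma~\ref{l:radius0}, where $\Delta_k\to0$, Lemma~\ref{lem:inf-unsuccessful}, where $\mathcal U$ is infinite, and Lemma~\ref{lem:unsuccessful-dir-nonneg}, where
\[
\liminf_{k\in\mathcal U,\,k\to\infty}\frac{f(X_k+\Delta_kD_k)-f(X_k)}{\Delta_k}\ge0 .
\]
All three lemmas apply under the stated hypotheses: conditions \eqref{ass:eta}--\eqref{ass:theta_eta_compatibility} give the drift of Lemma~\ref{lem:phi_drift_main}, and its proof already yields $\sum_k\mathbb E[\Delta_k^{\Dexp}]<\infty$, which is the input needed by Lemma~\ref{lem:unsuccessful-dir-nonneg}. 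So $\Pb(\Omega_0)=1$. Fix $\omega\in\Omega_0$ and an index set $V\subseteq\mathcal U$ with $X_k\to X^*$ along $V$ and $\{D_k\}_{k\in V}$ dense in $\mathbb S^{n-1}$. From here on the argument is deterministic.

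Fix a unit vector $u$. By density, we choose $k_j\in V$ with $k_j\to\infty$ and $D_{k_j}\to u$. We take these indices increasing, which is possible because density of the tail is needed and any finite set of removed indices leaves the remaining directions dense unless $u$ is isolated; we handle this by passing through $\varepsilon$-balls around $u$. Since $\Delta_{k_j}\to0$ and $X_{k_j}\to X^*$, for large $j$ all points lie in a neighborhood of $X^*$ where $f$ is Lipschitz with constant $L$. We split the quotient as
\[
\frac{f(X_{k_j}+\Delta_{k_j}u)-f(X_{k_j})}{\Delta_{k_j}}
=\frac{f(X_{k_j}+\Delta_{k_j}D_{k_j})-f(X_{k_j})}{\Delta_{k_j}}
+\frac{f(X_{k_j}+\Delta_{k_j}u)-f(X_{k_j}+\Delta_{k_j}D_{k_j})}{\Delta_{k_j}} .
\]
The second term is bounded in absolute value by $L\|u-D_{k_j}\|\to0$. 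By Lemma~\ref{lem:unsuccessful-dir-nonneg} restricted to $V\subseteq\mathcal U$, the first term has nonnegative $\liminf$.

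Hence
\[
f^\circ(X^*;u)\ \ge\ \limsup_j\frac{f(X_{k_j}+\Delta_{k_j}u)-f(X_{k_j})}{\Delta_{k_j}}\ \ge\ \liminf_j(\cdot)\ \ge 0,
\]
where the first inequality holds because $y=X_{k_j}\to X^*$ and $t=\Delta_{k_j}\downarrow0$ is an admissible sequence in the definition of the Clarke derivative. Positive homogeneity of $f^\circ(X^*;\cdot)$ extends the bound to all $d\ne0$, and $d=0$ is trivial. By Definition~\ref{def:clarke-stationary}, $X^*$ is Clarke stationary.

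The main obstacle is making sure the probability-one event does not depend on the random choice of $V$ and $u$. This is why everything is placed into $\Omega_0$ first: Lemma~\ref{lem:unsuccessful-dir-nonneg} controls the whole sequence $\mathcal U$ at once, with an exceptional set fixed in advance, rather than a subsequence selected afterwards. A smaller subtlety is the convention that $D_k=D_{k,\bar m}$ on unsuccessful iterations; this must match the $D_k$ used in the density hypothesis, and it does by that convention.
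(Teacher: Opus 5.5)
Your argument is correct and follows the route the paper intends when it defers to \cite[Theorem~3.3]{MR4758401}: fix one full-measure event on which Lemma~\ref{lem:unsuccessful-dir-nonneg} holds for all of $\mathcal U$, extract $k_j\in V$ with $k_j\to\infty$ and $D_{k_j}\to u$, replace $D_{k_j}$ by $u$ at a cost of $L\|D_{k_j}-u\|$, and bound $f^\circ(X^*;u)$ from below by the limsup along $(X_{k_j},\Delta_{k_j})$. The one muddled step is the choice of $k_j\to\infty$; it holds simply because, for $n\ge2$, $\mathbb S^{n-1}$ has no isolated points, so every neighbourhood of $u$ contains $D_k$ for infinitely many $k\in V$.
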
 

%% file: sections/complexity_goldstein.tex
\section{Expected Complexity with Goldstein-stationarity stopping}\label{sec:clarke complexity}

To establish the expected iteration complexity of the proposed algorithm, we adapt the supermartingale stopping-time framework introduced in \cite{MR4151319}. This framework allows us to bound the expected number of iterations required to reach an approximate stationary condition. In contrast to the almost-sure convergence analysis of the previous section, we now characterize stationarity via the Goldstein $\goldelta$-subdifferential, $\partial_\goldelta f$, rather than the Clarke subdifferential, $\partial_C f$. The reason for this shift is detailed in Remark~\ref{rem:rescale-Lipschitz}.
We start by recalling the definition of stopping times.
\begin{definition}[Stopping time]
Let $(\mathcal A_k)_{k\ge0}$ be a filtration on
$(\Omega,\mathcal A,\mathbb P)$. A random variable
\(
    T:\Omega\to\mathbb N_0\cup\{\infty\}
\)
is called a stopping time with respect to $(\mathcal A_k)_{k\ge0}$ if
\[
    \{T\le k\}\in\mathcal A_k
    \qquad
    \forall k\in\mathbb N_0 .
\]
Equivalently, \(T\) is a stopping time if
\[
    \{T=k\}\in\mathcal A_k
    \qquad
    \forall k\in\mathbb N_0 .
\]
\end{definition}
For the complexity analysis, we use the pre-direction algorithmic-history filtration
\[
    \mathcal A_k:=\mathcal G_{k-1},
\]
where \(\mathcal G_{k-1}\) is the information available at the beginning of
iteration \(k\), before sampling the directions and before drawing the oracle
estimates of iteration \(k\). Thus \(X_k\), \(\Delta_k\), and \(\Phi_k\) are
\(\mathcal A_k\)-measurable. Moreover, with the notation of Section \ref{sec:algo},
\[
    \mathcal A_k=\mathcal G_{k-1}
    \subseteq
    \mathcal F_{k-1}
    =
    \mathcal G_{k-1}
    \vee
    \sigma(D_{k,1},\ldots,D_{k,\bar{m}})
    \subseteq
    \mathcal A_{k+1}=\mathcal G_k .
\]
The filtration \((\mathcal F_{k-1})\) is used for oracle-conditioning
arguments after the directions have been sampled, whereas
\((\mathcal A_k)\) is used for the stopping-time and radius-dynamics
arguments. Since \(f\) is possibly nonsmooth, the stopping time is defined in terms of the
\(\goldelta\)-Goldstein subdifferential.
\begin{definition}[Goldstein subdifferential]
For \(r>0\), let
$B_r(x):=\{y\in\mathbb{R}^n:\|y-x\|\le r\}
$
denote the closed ball of radius \(r\) centered at \(x\). The
\emph{\(r\)-Goldstein subdifferential} of \(f\) at \(x\) is defined as the
convex hull of the Clarke subdifferentials at all points \(y\in B_r(x)\), namely
\[
\partial_r f(x)
:=
\operatorname{conv}\left(
\bigcup_{y\in B_r(x)} \partial_C f(y)
\right).
\]
\end{definition}
For fixed tolerances $\goldelta>0$ and $\varepsilon>0$, we define the stopping time as
\begin{equation}\label{Teps}
    T_{\goldelta, \varepsilon} = \inf\left\{ k \in \mathbb{N}_0 : \min_{g \in \partial_\goldelta f(\Xk)} \|g\| \leq \varepsilon \right\}.
\end{equation}
Equivalently, \(T_{\goldelta,\varepsilon}\) is the first iteration at which
\(X_k\) is a \((\goldelta,\varepsilon)\)-Goldstein stationary point.\\
Since the definition only depends on \((\Xk)\) which are $\mathcal A_{k}$ measurable, it follows that $T_{\goldelta, \varepsilon}$ is indeed a ($\mathcal A_k$)-stopping time. Adapting the framework of \cite{MR4151319}, we can provide a bound on $\me[T_{\goldelta, \varepsilon}]$. At a given iteration \(k\), define the base-estimate event
\[
\mathcal E_{k,0}
:=
\left\{
    |\tilde f(X_k)-f(X_k)|
    \le
    \varepsilon_f\Delta_k^{\Dexp}
\right\},
\]
and, for each \(m=1,\ldots,\bar{m}\), the trial-estimate event
\[
\mathcal E_{k,m}
:=
\left\{
    |\tilde f(X_k+\Delta_kD_{k,m})
      -f(X_k+\Delta_kD_{k,m})|
    \le
    \varepsilon_f\Delta_k^{\Dexp}
\right\}.
\]
Then, fix a stationarity tolerance
\(\varepsilon>0\) and a Goldstein neighborhood radius \(\goldelta>0\).
Let
\[
    G_{\goldelta,k}^*
    \in
    \operatorname*{argmin}_{g\in\partial_{\goldelta}f(X_k)}\|g\|,
\]
and, on the event \(\{T_{\goldelta,\varepsilon}>k\}\), define
\[
    D_k^*
    :=
    -\frac{G_{\goldelta,k}^*}{\|G_{\goldelta,k}^*\|}.
\]
Indeed, on this event we have \(\|G_{\goldelta,k}^*\|>\varepsilon\), so
\(D_k^*\) is well-defined. The direction \(D_k^*\) is the Goldstein descent direction at iteration $k$. Since the algorithm
samples random directions, we only need one sampled direction to be sufficiently
aligned with \(D_k^*\). To quantify the probability that a randomly sampled direction is sufficiently
aligned with a target descent direction, we use the standard formula for the
surface measure of a spherical cap. 

\begin{definition}[Spherical cap]
\label{def:spherical-cap-probability}
Fix \(0<\rho<1\) and a unit vector \(u\in\mathbb S^{n-1}\). The
\emph{spherical cap with center \(u\) and alignment parameter \(\rho\)} is
\[
    \mathcal C_\rho(u)
    :=
    \{d\in\mathbb S^{n-1}:\langle u,d\rangle\ge\rho\}.
\]
Equivalently, \(\mathcal C_\rho(u)\) is the cap with half-angle
\(\arccos(\rho)\). 
\end{definition}
Thus, \(D_{k,m}\in\mathcal C_\rho(D_k^*)\) means that the sampled direction
\(D_{k,m}\) is sufficiently aligned with the Goldstein descent direction at iteration $k$. Now, define the good event that both estimates are accurate and  $\exists\ m\in\{1,\ldots,\bar{m}\}:
D_{k,m}\in \mathcal C_\rho(D_k^\ast)$, with \(D_k^*\) being the Goldstein descent direction at iteration $k$, and \(C_\rho(D_k^\ast)\) being the spherical cap therein centered. That is, define
\[
\mathcal I_k
:=
\mathcal E_{k,0}
\cap
\left[
\bigcup_{m=1}^{\bar{m}}
\left(
\{D_{k,m}\in\mathcal C_\rho(D_k^*)\}
\cap
\mathcal E_{k,m}
\right)
\right],
\]
and let $J_k:=\mathbf 1_{\mathcal I_k}. $
We then set
\begin{equation} \label{eq:Wk}
    W_{k+1}:=2J_k-1\in\{-1,+1\}.
\end{equation}
Hence \(W_{k+1}=+1\) if the good event \(\mathcal I_k\) occurs, and
\(W_{k+1}=-1\) otherwise. In particular,
we will show that,
whenever \(T_{\goldelta,\varepsilon}>k\) and
\(\Delta_k\le \bar\Delta_{\goldelta,\varepsilon}\), the occurrence of \(\mathcal I_k\)
implies that iteration \(k\) is successful. We set \(W_0=1\) only for
initialization. Notice that \(W_k\) is \(\mathcal A_k\)-measurable, whereas
\(W_{k+1}\) is not \(\mathcal A_k\)-measurable. We now state the abstract conditions needed to apply the stopping-time
framework (see also \cite[Assumption 1]{MR4151319}).

\begin{assumption}
\label{ass:5}
Let \(T_{\goldelta,\varepsilon}\) be the stopping time defined in
\eqref{Teps}. Let the stochastic process \(\{(\Phi_k,\Delta_k,W_k)\}_{k\ge0}\) be adapted to
\((\mathcal A_k)_{k\ge0}\). The following conditions hold.

\begin{enumerate}
    \item[(i)] There exist constants \(\lambda>0\) and
    \(\Delta_{\max}= \Delta_0 e^{\lambda j_{max}}>0\), \(j_{max}\in\mathbb Z\) such that
    \[
        \lambda=-\log\gamma
        \qquad\text{and}\qquad
        \Delta_k\le \Delta_{\max}
        \quad\text{for all }k\ge0.
    \]

    \item[(ii)] There exist a threshold
    \[
        \bar\Delta_{\goldelta,\varepsilon}
        =
        \Delta_0 e^{\lambda j_{\goldelta,\varepsilon}},
        \qquad
        j_{\goldelta,\varepsilon}\in\mathbb Z,
        \quad j_{\goldelta,\varepsilon}\le 0,
    \]
    and a constant \(q>1/2\) such that
    \[
        \mathbb P(W_{k+1}=+1\mid \mathcal A_k)= q,
        \qquad
        \mathbb P(W_{k+1}=-1\mid \mathcal A_k)= 1-q,
    \]
    and, for every \(k\ge0\),
    \begin{equation}
    \label{eq:dynamics}
        \mathbf 1_{\{T_{\goldelta,\varepsilon}>k\}}
        \Delta_{k+1}
        \ge
        \mathbf 1_{\{T_{\goldelta,\varepsilon}>k\}}
        \min\left\{
            \Delta_k e^{\lambda W_{k+1}},
            \bar\Delta_{\goldelta,\varepsilon}
        \right\}.
    \end{equation}

    \item[(iii)] There exist a nondecreasing function
    \(h:[0,\infty)\to [0,\infty)\) and a constant \(\Theta>0\) such that,
    for every \(k\ge0\),
    \begin{equation}
    \label{eq:abstract_drift}
        \mathbb E[
            \Phi_k-\Phi_{k+1}
            \mid
            \mathcal A_k
        ]
        \mathbf 1_{\{T_{\goldelta,\varepsilon}>k\}}
        \ge
        \Theta h(\Delta_k)
        \mathbf 1_{\{T_{\goldelta,\varepsilon}>k\}}
        \qquad\text{a.s.}
    \end{equation}
    In particular, we take
    $h(\Delta)=\Delta^{\Dexp}. $
\end{enumerate}
\end{assumption}
Assumption~\ref{ass:5} states that, before the stopping time, the
nonnegative stochastic process \(\Phi_k\) has an expected decrease of at least
\(\Theta h(\Delta_k)\) at each iteration. Moreover, before the stopping time
and whenever \(\Delta_k\le \bar\Delta_{\goldelta,\varepsilon}\), the radius
has an upward tendency: the good event \(\mathcal I_k\), which implies
\(W_{k+1}=+1\), occurs with conditional probability larger that  \(1/2\), namely
\[\mathbb P(W_{k+1}=+1\mid\mathcal A_k)
   = q >1/2.
\]
Our goal is to bound \(\mathbb E[T_{\goldelta,\varepsilon}]\) in terms of
\(h(\bar\Delta_{\goldelta,\varepsilon})\). What happens is that, on average, $\D_k \geq \bar \D_{\goldelta,\varepsilon}$ frequently, and hence, $\me[\Phi_{k+1} - \Phi_k]$ can be bounded by a negative fixed value (dependent on $\goldelta, \varepsilon$), sufficiently frequently, which will allow us
to apply Wald’s identity and derive the bound on $\me[T_{\goldelta,\varepsilon}]$ (for more details refer to \cite[Theorem 2]{MR4151319}). 
The next result is a direct specialization of the supermartingale stopping-time bound of Blanchet, Cartis, Menickelly and Scheinberg~\cite[Theorem~2]{MR4151319} to our framework.
\begin{theorem}\label{t:expected complexity}
    Let Assumption~\ref{ass:5} hold. Then
    \[
        \mathbb E[T_{\goldelta,\varepsilon}]
        \le
        \frac{q}{2q-1}
        \cdot
        \frac{\Phi_0}{\Theta h(\bar\Delta_{\goldelta,\varepsilon})}
        +1 .
    \]
\end{theorem}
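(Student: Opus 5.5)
The plan is to follow the renewal–reward argument of \cite[Theorem~2]{MR4151319}. The deterministic lower bound on the drift comes from Assumption~\ref{ass:5}(iii), and a probabilistic lower bound on how often the radius is large comes from the random-walk structure in Assumption~\ref{ass:5}(ii). The key steps, in order, are as follows.

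\textbf{Step 1 (potential telescoping).} Since $h$ is nondecreasing and $h(\Delta_k)\ge h(\bar\Delta_{\goldelta,\varepsilon})\mathbf 1_{\{\Delta_k\ge\bar\Delta_{\goldelta,\varepsilon}\}}$, I will sum \eqref{eq:abstract_drift} over $k<T_{\goldelta,\varepsilon}\wedge N$. I will use optional stopping for the process $\Phi_{k\wedge T}+\Theta\sum_{j<k\wedge T}h(\Delta_j)$, which is a nonnegative supermartingale with respect to $(\mathcal A_k)$. Together with $\Phi_k\ge0$, this gives
\[
\Theta\, h(\bar\Delta_{\goldelta,\varepsilon})\,\mathbb E\Big[\sum_{k<T_{\goldelta,\varepsilon}}\mathbf 1_{\{\Delta_k\ge\bar\Delta_{\goldelta,\varepsilon}\}}\Big]\le \Phi_0 ,
\]
after letting $N\to\infty$ by monotone convergence.

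\textbf{Step 2 (radius random walk).} Write $\Delta_k=\Delta_0e^{\lambda Z_k}$ on the lattice; this is justified by Assumption~\ref{ass:5}(i) and the fact that every update multiplies by an integer power of $\gamma$. I will then couple the process with the capped walk $Y_k:=\min\{Z_k,j_{\goldelta,\varepsilon}\}$. By \eqref{eq:dynamics}, before $T_{\goldelta,\varepsilon}$ the process $Z$ dominates the walk that moves $+1$ on $W_{k+1}=+1$ and $-1$ otherwise, capped at $j_{\goldelta,\varepsilon}$. The capped walk starts at $0\ge j_{\goldelta,\varepsilon}$, i.e.\ at the cap.

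\textbf{Step 3 (counting visits to the cap).} Let $N_1$ be the number of $k<T$ with $Y_k=j_{\goldelta,\varepsilon}$ and $N_2$ the number with $Y_k<j_{\goldelta,\varepsilon}$. Each step of the walk below the cap moves up with conditional probability $q>1/2$ given $\mathcal A_k$. Applying Wald's identity to the martingale $Y_k-\sum_{j<k}\big(\mathbb E[Y_{j+1}-Y_j\mid\mathcal A_j]\big)$ at the stopping time $T\wedge N$, and using the drift $2q-1$ below the cap, the drift $\ge-(1-q)$ at the cap, and $Y\le j_{\goldelta,\varepsilon}$, I get
\[
(2q-1)\mathbb E[N_2]\le (1-q)\mathbb E[N_1],\qquad\text{hence}\qquad \mathbb E[T]\le \tfrac{q}{2q-1}\mathbb E[N_1]+1 .
\]
The $+1$ absorbs the boundary term at termination.

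\textbf{Step 4 (combine).} Since $\{Y_k=j_{\goldelta,\varepsilon}\}\subseteq\{\Delta_k\ge\bar\Delta_{\goldelta,\varepsilon}\}$, Step~1 gives $\mathbb E[N_1]\le\Phi_0/(\Theta h(\bar\Delta_{\goldelta,\varepsilon}))$. Inserting this into Step~3 yields the stated bound.

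The main obstacle is Step~3, in particular the integrability needed for Wald and optional stopping when $\mathbb E[T]$ is not yet known to be finite. I would first work with $T\wedge N$ throughout, where all sums are bounded by $N$. I would then derive $\mathbb E[T\wedge N]\le\frac{q}{2q-1}\mathbb E[N_1^{(N)}]+1$ with $N_1^{(N)}\le\Phi_0/(\Theta h(\bar\Delta_{\goldelta,\varepsilon}))$ uniformly in $N$, and pass to the limit by monotone convergence. A secondary check is that the domination in Step~2 survives the cap. Precisely, $Z_{k+1}\ge\min\{Z_k+W_{k+1},j_{\goldelta,\varepsilon}\}$ together with $Z_k\ge Y_k$ implies $Z_{k+1}\ge Y_{k+1}$ by monotonicity of $y\mapsto\min\{y+w,j\}$.
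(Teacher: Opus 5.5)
\textbf{Verdict.} Your proof is correct, but it does not follow the paper's route. The paper gives no argument and simply imports \cite[Theorem~2]{MR4151319}. That result is proved by a renewal-reward argument: renewal epochs at the visits of $\Delta_k$ to $[\bar\Delta_{\goldelta,\varepsilon},\infty)$, an expected inter-renewal time of at most $q/(2q-1)$, and Wald's identity over cycles, combined with the drift to count renewals before $T_{\goldelta,\varepsilon}$.

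\textbf{How your route differs.} You replace the renewal structure with a one-shot occupation-time comparison. The drift inequality bounds the expected time spent at the cap. Optional stopping for the compensated capped walk at the bounded time $T_{\goldelta,\varepsilon}\wedge N$ then gives $(2q-1)\mathbb E[N_2]\le(1-q)\mathbb E[N_1]$. This has three advantages:
\begin{enumerate}
\item It is self-contained, and integrability is handled cleanly by truncation.
\item It only uses $\mathbb P(W_{k+1}=+1\mid\mathcal A_k)\ge q$. That inequality is what Lemma~\ref{lem:good-event-probability} actually provides, whereas Assumption~\ref{ass:5}(ii) is stated with equality.
\item Since $Y_{T_{\goldelta,\varepsilon}\wedge N}\le j_{\goldelta,\varepsilon}=Y_0$, there is no boundary term, so your argument gives the bound even without the $+1$.
\end{enumerate}

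\textbf{One thing to tidy.} In Step~2 you define $Y_k:=\min\{Z_k,j_{\goldelta,\varepsilon}\}$. Your drift computation and final domination check, however, concern the coupled walk $Y_{k+1}=\min\{Y_k+W_{k+1},j_{\goldelta,\varepsilon}\}$ with $Y_0=j_{\goldelta,\varepsilon}$. Either choice works when $\Delta_k$ lies on the lattice $\Delta_0e^{\lambda\mathbb Z}$. That lattice property comes from the algorithm, not from Assumption~\ref{ass:5}. The coupled walk removes the need for it, because the domination $Z_k\ge Y_k$ holds for real-valued $Z_k$ by monotonicity. That already gives $\{Y_k=j_{\goldelta,\varepsilon}\}\subseteq\{\Delta_k\ge\bar\Delta_{\goldelta,\varepsilon}\}$ and the exact drifts $2q-1$ below the cap and $-(1-q)$ at it. So use the coupled walk consistently throughout.
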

Assumption~\ref{ass:5}(i) follows from the compactness assumption (see Remark \ref{rem:compact-tested-points}). For Assumption~\ref{ass:5}(iii), the merit-function drift established in Lemma~\ref{lem:phi_drift_main} gives
\[
    \mathbb E[
        \Phi_k-\Phi_{k+1}
        \mid
        \mathcal F_{k-1}
    ]
    \ge
    \Theta\Delta_k^{\Dexp},
\]
where
    $\Theta:=\eta(1-\gamma^{\Dexp})-c_{\max}>0.$
Since
\(
    \mathcal A_k\subseteq\mathcal F_{k-1},
\)
the tower property gives
\[
\begin{aligned}
    \mathbb E[
        \Phi_k-\Phi_{k+1}
        \mid
        \mathcal A_k
    ]
    =
    \mathbb E\!\left[
        \mathbb E[
            \Phi_k-\Phi_{k+1}
            \mid
            \mathcal F_{k-1}
        ]
        \,\middle|\,
        \mathcal A_k
    \right]                                                   
    \ge
    \Theta\Delta_k^{\Dexp}.
\end{aligned}
\]
Multiplying by the \(\mathcal A_k\)-measurable indicator
\(\mathbf 1_{\{T_{\goldelta,\varepsilon}>k\}}\), we obtain
\[
    \mathbb E[
        \Phi_k-\Phi_{k+1}
        \mid
        \mathcal A_k
    ]
    \mathbf 1_{\{T_{\goldelta,\varepsilon}>k\}}
    \ge
    \Theta\Delta_k^{\Dexp}
    \mathbf 1_{\{T_{\goldelta,\varepsilon}>k\}},
\]
which is Assumption~\ref{ass:5}(iii).\\
It remains to verify Assumption~\ref{ass:5}(ii), namely the radius dynamics and
the lower bound
\[
    \mathbb P(W_{k+1}=+1\mid\mathcal A_k)\ge q>1/2
\]
before the stopping time and for sufficiently small \(\Delta_k\). The next two
lemmas provide exactly this. The first lemma proves a uniform lower bound
on the probability that at least one of the \(\bar{m}\) sampled directions yields a
sufficient-decrease step. The second lemma shows that, with the given stepsize update rule, the radius process satisfies the required comparison
inequality. Before stating the lemmas, we report the following definition that collects the
notation for the regularized incomplete Beta function and the cap
probability under the uniform distribution of directions on the sphere.

\begin{definition}[Regularized incomplete Beta function and Spherical-cap probability]
\label{def:reg-inc-beta-funct}
 For \(a,b>0\) and \(z\in[0,1]\), the
\emph{regularized incomplete Beta function} is
\[
    I_z(a,b)
    :=
    \frac{B(z;a,b)}{B(a,b)},
    \qquad
    B(z;a,b)
    :=
    \int_0^z t^{a-1}(1-t)^{b-1}\,dt,
\]
where
\[
    B(a,b)
    :=
    \frac{\Gamma(a)\Gamma(b)}{\Gamma(a+b)}.
\]
Equivalently, if \(U\sim\mathrm{Beta}(a,b)\), then
\[
    I_z(a,b)=\mathbb P(U\le z).
\]
Now, assume \(n\ge2\). By standard properties of a uniformly distributed point on the sphere (see, e.g., \cite[Chapter~9]{MardiaJupp2000}),
the surface measure of the spherical cap \(\mathcal C_\rho(u)\) is independent
of its center \(u\). More precisely, if \(D\sim\mathrm{Unif}(\mathbb S^{n-1})\), then
\begin{equation}\label{capprob}
    \mathbb P(D\in\mathcal C_\rho(u))
    =
    p_\rho
    :=
    \frac12
    I_{1-\rho^2}
    \left(
        \frac{n-1}{2},
        \frac12
    \right).
\end{equation}
 Moreover,
$p_\rho>0
    \ \text{for every fixed }\rho\in(0,1),
    \ \text{and}\ 
    p_\rho\downarrow0
     \text{ as }\rho\uparrow1.$

\end{definition}
If
\(D_{k,m}\) is uniformly distributed on \(\mathbb S^{n-1}\), then $\mathbb P(D_{k,m}\in \mathcal C_\rho(D_k^*)\mid\mathcal A_k)
    =
    p_\rho.$
We first show that, before the stopping time and for sufficiently small
stepsizes, the occurrence of the good event \(\mathcal I_k\) forces the algorithmic
sufficient-decrease test to succeed. This is the analogue, in the present
Goldstein-stationarity setting, of the key implication used in Dzahini's
analysis \cite{MR4359469}: if the stationarity measure is still larger than the prescribed
tolerance, the stepsize is small enough, and the estimates are accurate, then
the iteration must be successful. Here, the role of the descent direction is
played by any sampled direction lying in the spherical cap~
\(\mathcal C_\rho(D_k^\ast)\).

\begin{lemma}[Sufficient decrease on the good event]
\label{lem:good-event-sufficient-decrease}
Assume
\[
   c_1
   :=
   \rho\varepsilon-\sqrt{1-\rho^{2}}\,L
   >
   0 .
\]
Let \(\theta>0\) be the sufficient-decrease parameter used by the algorithm, and
choose
\[
  \Delta_{\goldelta,\varepsilon}
  <
  \min\left\{
      \goldelta,
      \left(
          \frac{c_1}{\theta+2\varepsilon_f}
      \right)^{1/(\Dexp-1)}
  \right\}.
\]
Let
\[
    B_k
    :=
    \{T_{\goldelta,\varepsilon}>k\}
    \cap
    \{\Delta_k\le\Delta_{\goldelta,\varepsilon}\}.
\]
Then, on the event \(B_k\cap\mathcal I_k\), there exists
\(m\in\{1,\ldots,\bar m\}\) such that
\[
    D_{k,m}\in\mathcal C_\rho(D_k^*)
\]
and both the base estimate and the corresponding trial estimate are
\(\varepsilon_f\)-accurate. For such an index \(m\),
\[
    \tilde f(X_k+\Delta_kD_{k,m})
    -
    \tilde f(X_k)
    \le
    -\theta\Delta_k^{\Dexp}.
\]
Consequently, iteration \(k\) is successful.
\end{lemma}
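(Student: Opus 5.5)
The existence of the index \(m\) follows directly from the definition of \(\mathcal I_k\): on \(\mathcal I_k\) the base event \(\mathcal E_{k,0}\) holds, and for some \(m\) we have \(D_{k,m}\in\mathcal C_\rho(D_k^*)\) together with \(\mathcal E_{k,m}\). The real content is the descent inequality. I will obtain it by first bounding the \emph{true} difference \(f(X_k+\Delta_kD_{k,m})-f(X_k)\) from above through Lebourg's mean value theorem, and then passing to estimates at a cost of \(2\varepsilon_f\Delta_k^{\Dexp}\).

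\textbf{Step 1 (mean value).} Lebourg's theorem gives a point \(Y\) on the open segment \((X_k,X_k+\Delta_kD_{k,m})\) and some \(g\in\partial_C f(Y)\) with
\[
f(X_k+\Delta_kD_{k,m})-f(X_k)=\Delta_k\langle g,D_{k,m}\rangle .
\]
On \(B_k\) we have \(\|Y-X_k\|\le\Delta_k\le\Delta_{\goldelta,\varepsilon}<\goldelta\), so \(Y\in B_\goldelta(X_k)\) and therefore \(g\in\partial_\goldelta f(X_k)\). In addition, \(\|g\|\le L\), since \(f\) is \(L\)-Lipschitz on \({\mathcal C}_{enl}\) and Clarke subgradients are bounded by the Lipschitz constant.

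\textbf{Step 2 (alignment).} This is the main obstacle. Write \(G:=G^*_{\goldelta,k}\) for the minimal-norm element of the closed convex set \(\partial_\goldelta f(X_k)\). The projection characterization gives \(\langle g,G\rangle\ge\|G\|^2\) for every \(g\in\partial_\goldelta f(X_k)\), hence \(\langle g,D_k^*\rangle\le-\|G\|<-\varepsilon\) on \(\{T_{\goldelta,\varepsilon}>k\}\). Decompose \(D_{k,m}=\langle D_{k,m},D_k^*\rangle D_k^*+w\), where \(w\perp D_k^*\) and \(\|w\|=\sqrt{1-\langle D_{k,m},D_k^*\rangle^2}\le\sqrt{1-\rho^2}\) because the cap condition gives \(\langle D_{k,m},D_k^*\rangle\ge\rho\). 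Then
\[
\langle g,D_{k,m}\rangle\le\rho\langle g,D_k^*\rangle+\|g\|\,\|w\|\le-\rho\varepsilon+\sqrt{1-\rho^2}\,L=-c_1 .
\]
The first term uses \(\langle g,D_k^*\rangle<0\) together with \(\langle D_{k,m},D_k^*\rangle\ge\rho\). The point needing care is that the projection inequality holds only because \(\partial_\goldelta f(X_k)\) is closed and convex; this follows from compactness, since it is the convex hull of a compact set by outer semicontinuity of \(\partial_C f\). Combining with Step 1 yields \(f(X_k+\Delta_kD_{k,m})-f(X_k)\le-c_1\Delta_k\).

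\textbf{Step 3 (estimates).} On \(\mathcal E_{k,0}\cap\mathcal E_{k,m}\),
\[
\tilde f(X_k+\Delta_kD_{k,m})-\tilde f(X_k)\le-c_1\Delta_k+2\varepsilon_f\Delta_k^{\Dexp}.
\]
This is at most \(-\theta\Delta_k^{\Dexp}\) exactly when \(c_1\ge(\theta+2\varepsilon_f)\Delta_k^{\Dexp-1}\). That condition follows from \(\Delta_k\le\Delta_{\goldelta,\varepsilon}<(c_1/(\theta+2\varepsilon_f))^{1/(\Dexp-1)}\), using \(\Dexp>1\).

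\textbf{Step 4 (success).} The level-\(0\) test of Algorithm~\ref{alg:lsep} succeeds for direction \(m\). Since the outer loop stops at the first successful direction, iteration \(k\) is successful, with some \(m_k^*\le m\).
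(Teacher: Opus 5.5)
Your proof is correct and follows the paper's argument step for step: Lebourg's mean value theorem with the intermediate point lying in $B_r(X_k)$, so that $g\in\partial_r f(X_k)$; the projection inequality $\langle g, G^*_{r,k}\rangle \ge \|G^*_{r,k}\|^2$; the cap decomposition giving $\langle g, D_{k,m}\rangle\le -c_1$; and the $2\varepsilon_f\Delta_k^{p}$ accuracy correction absorbed by the choice of $\Delta_{r,\varepsilon}$. Your two extra remarks are refinements the paper leaves implicit: closedness and convexity of $\partial_r f(X_k)$ follow from compactness, and the accepted index satisfies $m_k^*\le m$ because the outer loop stops at the first successful direction.
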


\begin{proof}
We argue on the event \(B_k\). By definition of
\(T_{\goldelta,\varepsilon}\),
\[
    \min_{g\in\partial_{\goldelta} f(X_k)}\|g\|>\varepsilon .
\]
Choose
\[
    G_{\goldelta,k}^*
    \in
    \operatorname*{argmin}_{g\in\partial_{\goldelta} f(X_k)}
    \|g\|,
    \qquad
    D_k^*
    :=
    -\frac{G_{\goldelta,k}^*}{\|G_{\goldelta,k}^*\|}.
\]
Then \(\|G_{\goldelta,k}^*\|>\varepsilon\).

Fix \(m\in\{1,\ldots,\bar{m}\}\) such that
\(D_{k,m}\in\mathcal C_\rho(D_k^*)\), and set $ S_{k,m}:=\Delta_kD_{k,m}.$
By Lebourg's mean-value theorem, there exist
\[
    \bar X_{k,m}=X_k+\Xi_{k,m}S_{k,m},
    \qquad
    \Xi_{k,m}\in(0,1),
\]
and
\[
    \bar G_{k,m}\in\partial_C f(\bar X_{k,m})
\]
such that
\[
    f(X_k+S_{k,m})-f(X_k)
    =
    \langle \bar G_{k,m},S_{k,m}\rangle
    =
    \Delta_k\langle \bar G_{k,m},D_{k,m}\rangle .
\]
Since \(\Delta_k\le\Delta_{\goldelta,\varepsilon}\le\goldelta\), we have
\[
    \|\bar X_{k,m}-X_k\|\le\Delta_k\le\goldelta .
\]
Therefore \(\bar X_{k,m}\in B_{\goldelta}(X_k)\), and hence
\[
    \bar G_{k,m}\in\partial_{\goldelta}f(X_k).
\]
By the projection theorem applied to the closed convex set
\(\partial_{\goldelta} f(X_k)\),
\[
    \langle \bar G_{k,m},G_{\goldelta,k}^*\rangle
    \ge
    \|G_{\goldelta,k}^*\|^2 .
\]
Consequently,
\[
    \langle \bar G_{k,m},D_k^*\rangle
    =
    -\frac{
        \langle \bar G_{k,m},G_{\goldelta,k}^*\rangle
    }{
        \|G_{\goldelta,k}^*\|
    }
    \le
    -\|G_{\goldelta,k}^*\|
    <
    -\varepsilon .
\]
Write
\[
    D_{k,m}
    =
    R_{k,m}D_k^*
    +
    \sqrt{1-R_{k,m}^2}\,V_{k,m},
\]
where
\[
    R_{k,m}:=\langle D_k^*,D_{k,m}\rangle,
    \qquad
    V_{k,m}\perp D_k^*,
    \qquad
    \|V_{k,m}\|=1.
\]
Since \(D_{k,m}\in\mathcal C_\rho(D_k^*)\), we have
\(R_{k,m}\ge\rho\). Moreover, since \(f\) is \(L\)-Lipschitz on the relevant
neighborhood, \(\|\bar G_{k,m}\|\le L\). Thus
\[
\begin{aligned}
    \langle \bar G_{k,m},D_{k,m}\rangle
    &=
    R_{k,m}\langle \bar G_{k,m},D_k^*\rangle
    +
    \sqrt{1-R_{k,m}^2}
    \langle \bar G_{k,m},V_{k,m}\rangle                                      \\
    &\le
    -R_{k,m}\varepsilon
    +
    \sqrt{1-R_{k,m}^2}\,L                                                   \\
    &\le
    -\rho\varepsilon+\sqrt{1-\rho^2}\,L                                      \\
    &=
    -c_1 .
\end{aligned}
\]
Hence
\[
    f(X_k+\Delta_kD_{k,m})-f(X_k)
    \le
    -c_1\Delta_k.
\]
If the two estimates are \(\varepsilon_f\)-accurate, then
\[
\begin{aligned}
    \tilde f(X_k+\Delta_kD_{k,m})-\tilde f(X_k)
    &\le
    f(X_k+\Delta_kD_{k,m})-f(X_k)
    +
    2\varepsilon_f\Delta_k^{\Dexp}                                           \\
    &\le
    -c_1\Delta_k
    +
    2\varepsilon_f\Delta_k^{\Dexp}.
\end{aligned}
\]
By the definition of \(\Delta_{\goldelta,\varepsilon}\),
\[
    \Delta_k^{\Dexp-1}
    \le
    \Delta_{\goldelta,\varepsilon}^{\Dexp-1}
    <
    \frac{c_1}{\theta+2\varepsilon_f}.
\]
Therefore
\[
    c_1\Delta_k
    >
    (\theta+2\varepsilon_f)\Delta_k^{\Dexp}.
\]
It follows that
\[
    \tilde f(X_k+\Delta_kD_{k,m})-\tilde f(X_k)
    \le
    -\theta\Delta_k^{\Dexp}.
\]
Thus the sufficient-decrease test succeeds for direction \(m\), and iteration
\(k\) is successful.
\end{proof}

\begin{remark}[Overcoming the coarse Clarke bound via Goldstein subdifferentials]%
\label{rem:rescale-Lipschitz}
In a pure Clarke framework, bounding the Lebourg subgradient relies on the coarse Lipschitz bound $\|\bar G_{k,m}-G_k\|\le 2L$. This requires a descent margin of $\rho\varepsilon-2L>0$, forcing $\rho > 2L/\varepsilon$. As $\varepsilon \to 0$, this constraint inevitably demands $\rho \ge 1$. The spherical cap becomes empty, collapsing the success probability $p_\rho$ to zero, and restricting the analysis to low-accuracy targets where $\varepsilon > 2L$. Shifting to the Goldstein $\goldelta$-subdifferential resolves this topological barrier. By targeting an $(\varepsilon, \goldelta)$-Goldstein stationary point and restricting $\Delta_k \le \goldelta$, the intermediate Lebourg point is trapped in $B_\goldelta(X_k)$, ensuring $\bar G_{k,m} \in \partial_\goldelta f(X_k)$. The projection theorem then eliminates the $2L$ residual, refining the descent condition to $c_1 := \rho\varepsilon - \sqrt{1-\rho^2}L > 0$. This yields $\rho > \frac{L}{\sqrt{\varepsilon^2 + L^2}}$, which is strictly less than $1$ for all $\varepsilon > 0$, guaranteeing a positive success probability as $\varepsilon \to 0$. In the smoother regime \(f\in C^{1,1}\), define 
\(
    G_k:=\nabla f(X_k),
    \;
    \bar G_{k,m}:=\nabla f(\bar X_{k,m}),
\)
where \(\bar X_{k,m}\) is the intermediate point on the segment
\([X_k,X_k+\Delta_kD_{k,m}]\). If \(\nabla f\) is
\(\bar L\)-Lipschitz, then
\[
    \left|
    \langle \bar G_{k,m}-G_k,D_{k,m}\rangle
    \right|
    \le
    \bar L\Delta_k.
\]
The $2L$ penalty is replaced by $\mathcal{O}(\Delta_k)$, yielding $c_{1,k}=\rho\varepsilon-\mathcal{O}(\Delta_k)$. Since $\Delta_k\to 0$ almost surely, $c_{1,k}>0$ eventually holds regardless of $\varepsilon$, as established in~\cite{MR4359469}.
\end{remark}
We next quantify the probability of the good event introduced above. More specifically, the following lemma shows that,
under conditional independence and accuracy assumptions, this event occurs
with probability bounded from below uniformly in \(k\).

\begin{lemma}[Probability of the good event]
\label{lem:good-event-probability}
Assume that, conditionally on \(\mathcal A_k\), the directions
\(D_{k,1},\ldots,D_{k,\bar{m}}\) are independent and uniformly distributed on
\(\mathbb S^{n-1}\). Assume also that the estimates are
\(\beta\)-probabilistically \(\varepsilon_f\)-accurate, namely
\[
    \mathbb P(\mathcal E_{k,0}\mid\mathcal A_k)\ge\beta,
    \qquad
    \mathbb P(\mathcal E_{k,m}\mid\mathcal A_k,D_{k,m})\ge\beta,
    \quad m=1,\ldots,\bar{m}.
\]
Moreover, assume that, conditionally on \(\mathcal A_k\), the base-estimate
event \(\mathcal E_{k,0}\) is independent of the trial events, and that the
pairs
\[
    \{(D_{k,m},\mathcal E_{k,m})\}_{m=1}^{\bar{m}}
\]
are conditionally independent.\\
Then,
we have
\[
    \mathbb P(\mathcal I_k\mid\mathcal A_k)
    =
    \mathbb P(W_{k+1}=+1\mid\mathcal A_k)
    \ge
    q_{\bar{m}}\,
\]
where
\[
    q_{\bar{m}}
    :=
    \beta\left[1-(1-p_\rho\beta)^{\bar{m}}\right].
\]
\end{lemma}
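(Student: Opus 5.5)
The plan is to condition first on $\mathcal A_k$ together with the base-estimate event, and then to handle the $\bar m$ directions through a product of independent failure probabilities.

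Note first that $\{W_{k+1}=+1\}=\{J_k=1\}=\mathcal I_k$ by \eqref{eq:Wk}. So the equality $\mathbb P(\mathcal I_k\mid\mathcal A_k)=\mathbb P(W_{k+1}=+1\mid\mathcal A_k)$ is immediate, and only the lower bound remains. Write
\[
    \mathcal I_k=\mathcal E_{k,0}\cap\mathcal S_k,
    \qquad
    \mathcal S_k:=\bigcup_{m=1}^{\bar m}\bigl(\{D_{k,m}\in\mathcal C_\rho(D_k^*)\}\cap\mathcal E_{k,m}\bigr).
\]
The direction $D_k^*$ depends only on $X_k$, which is $\mathcal A_k$-measurable, so it is fixed under the conditioning. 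Because $\mathcal E_{k,0}$ is conditionally independent of the trial pairs, we get $\mathbb P(\mathcal I_k\mid\mathcal A_k)=\mathbb P(\mathcal E_{k,0}\mid\mathcal A_k)\,\mathbb P(\mathcal S_k\mid\mathcal A_k)\ge\beta\,\mathbb P(\mathcal S_k\mid\mathcal A_k)$.

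Next, set $A_m:=\{D_{k,m}\in\mathcal C_\rho(D_k^*)\}\cap\mathcal E_{k,m}$. Since $\mathcal C_\rho(D_k^*)$ is determined by $\mathcal A_k$ and $D_{k,m}$, the tower property gives
\[
    \mathbb P(A_m\mid\mathcal A_k)
    =\mathbb E\bigl[\mathbf 1_{\{D_{k,m}\in\mathcal C_\rho(D_k^*)\}}\,\mathbb P(\mathcal E_{k,m}\mid\mathcal A_k,D_{k,m})\,\big|\,\mathcal A_k\bigr]
    \ge\beta\,\mathbb P(D_{k,m}\in\mathcal C_\rho(D_k^*)\mid\mathcal A_k)=\beta p_\rho .
\]
The last equality uses \eqref{capprob}: $D_{k,m}$ is uniform on the sphere conditionally on $\mathcal A_k$, and the cap measure does not depend on its center. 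The pairs $(D_{k,m},\mathcal E_{k,m})$ are conditionally independent, so the events $A_m^c$ are too. Hence
\[
    \mathbb P(\mathcal S_k\mid\mathcal A_k)=1-\prod_{m=1}^{\bar m}\bigl(1-\mathbb P(A_m\mid\mathcal A_k)\bigr)\ge 1-(1-p_\rho\beta)^{\bar m},
\]
using that $x\mapsto 1-x$ is decreasing on $[0,1]$. Combining this with the first step gives $q_{\bar m}$.

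The main obstacle is measure-theoretic and concerns the event $\{T_{\goldelta,\varepsilon}\le k\}$, on which $D_k^*$ is undefined. There I would fix an arbitrary $\mathcal A_k$-measurable unit vector, for example $e_1$, as a convention. The cap probability is still $p_\rho$, so the bound holds everywhere.

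A second point needing care is that the independence of $\mathcal E_{k,0}$ from $\mathcal S_k$ must be used jointly with the directions, not only with the events $\mathcal E_{k,m}$. This is exactly what the hypothesis that $\mathcal E_{k,0}$ is independent of the trial pairs provides. If needed, I would justify it via Assumption~\ref{ass:indep} after conditioning on $\mathcal F_{k-1}$ and then integrating.
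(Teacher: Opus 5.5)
Your proof is correct and follows the paper's argument. The common steps are the tower-property bound $\mathbb P(\{D_{k,m}\in\mathcal C_\rho(D_k^*)\}\cap\mathcal E_{k,m}\mid\mathcal A_k)\ge p_\rho\beta$, the product bound over the conditionally independent pairs, and pulling out the factor $\beta$ from $\mathcal E_{k,0}$ by conditional independence. The only differences are that you factor out $\mathcal E_{k,0}$ first, and that you add a convention for $D_k^*$ on $\{T_{r,\varepsilon}\le k\}$, which the paper leaves implicit.
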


\begin{proof}
Since \(D_{k,m}\) is uniformly distributed on
\(\mathbb S^{n-1}\) conditionally on \(\mathcal A_k\), and since the spherical
cap has probability \(p_\rho\), we have
\[
    \mathbb P(D_{k,m}\in\mathcal C_\rho(D_k^*)\mid\mathcal A_k)=p_\rho .
\]
Define  $\mathcal S_{k,m}
    :=
    \{D_{k,m}\in\mathcal C_\rho(D_k^*)\}
    \cap
    \mathcal E_{k,m}. $
Using the conditional uniformity of \(D_{k,m}\) and the conditional
\(\beta\)-accuracy of the trial estimate,
\[
\begin{aligned}
    \mathbb P(\mathcal S_{k,m}\mid\mathcal A_k)
    &=
    \mathbb E\!\left[
        \mathbf 1_{\{D_{k,m}\in\mathcal C_\rho(D_k^*)\}}
        \mathbb P(\mathcal E_{k,m}\mid\mathcal A_k,D_{k,m})
        \,\middle|\,
        \mathcal A_k
    \right]                                      \\
    &\ge
    \beta\,
    \mathbb P(D_{k,m}\in\mathcal C_\rho(D_k^*)\mid\mathcal A_k)              \\
    &=
    p_\rho\beta .
\end{aligned}
\]
By conditional independence, the events
\(\mathcal S_{k,1},\ldots,\mathcal S_{k,\bar{m}}\) are conditionally independent
given \(\mathcal A_k\). Hence
\[
    \mathbb P\left(
        \bigcup_{m=1}^{\bar{m}} \mathcal S_{k,m}
        \,\middle|\,
        \mathcal A_k
    \right)
    \ge
    1-(1-p_\rho\beta)^{\bar{m}}.
\]
Furthermore, \(\mathcal E_{k,0}\) is conditionally independent of these trial
events and satisfies
\[
    \mathbb P(\mathcal E_{k,0}\mid\mathcal A_k)\ge\beta.
\]
Therefore,
\[
\begin{aligned}
    \mathbb P(\mathcal I_k\mid\mathcal A_k)
    &=
    \mathbb P\left(
        \mathcal E_{k,0}
        \cap
        \bigcup_{m=1}^{\bar{m}} \mathcal S_{k,m}
        \,\middle|\,
        \mathcal A_k
    \right)                                                               \\
    &\ge
    \beta\left[1-(1-p_\rho\beta)^{\bar{m}}\right]                                  \\
    &=
    q_{\bar{m}}.
\end{aligned}
\]
Since \(W_{k+1}=+1\) if and only if \(\mathcal I_k\) occurs, we obtain
\[
    \mathbb P(W_{k+1}=+1\mid\mathcal A_k)
    =
    \mathbb P(\mathcal I_k\mid\mathcal A_k)
    \ge q_{\bar{m}},
\]
which gives the claim.
\end{proof}

\begin{remark}[Polling capacity \(\bar{m}\) and dimensional complexity]
\label{rem:M-lower-bound}
To apply the expected complexity bound of Theorem~\ref{t:expected complexity}
with constants bounded uniformly, it is convenient to fix a number
\(q_0\in(1/2,\beta)\) and choose \(\bar{m}\) so that
\[
    q_{\bar{m}}
    :=
    \beta\,[\,1-(1-p_\rho\beta)^{\bar{m}}\,]
    \ge q_0 .
\]
This is possible only if \(\beta>1/2\). Solving the inequality
\(q_{\bar{m}}\ge q_0\) gives
\[
    (1-p_\rho\beta)^{\bar{m}}
    \le
    1-\frac{q_0}{\beta},
\]
and hence
\[
    \bar{m}
    \ge
    \frac{\log(1-q_0/\beta)}
         {\log(1-p_\rho\beta)}.
\]
Thus, for example, it is sufficient to choose
\[
    \bar{m}
    >
    \bar{m}_{\min}(q_0)
    :=
    \left\lceil
    \frac{\log(1-q_0/\beta)}
         {\log(1-p_\rho\beta)}
    \right\rceil .
\]
With this choice, \(q_{\bar{m}}\ge q_0>1/2\), satisfying the supermartingale condition. However, targeting the Goldstein $\goldelta$-subdifferential requires the descent margin $c_1 > 0$, forcing $\rho > L / \sqrt{\varepsilon^2 + L^2}$.
As the stationarity tolerance \(\varepsilon\downarrow0\), the required cap
alignment satisfies \(\rho\uparrow1\), and the cap probability \(p_\rho\)
goes to zero. To quantify the resulting dependence of \(\bar{m}\) on \(\varepsilon\), choose
\(\rho\) so that
\[
    1-\rho^2\asymp \left(\frac{\varepsilon}{L}\right)^2 .
\]
Equivalently, the cap remains nonempty but becomes narrower as
\(\varepsilon\downarrow0\). Let \(z:=1-\rho^2\). Using the small-\(z\)
asymptotics of the regularized incomplete Beta function from Definition~\ref{def:reg-inc-beta-funct}, we have
\[
    p_\rho
    =
    \frac12
    I_z\left(\frac{n-1}{2},\frac12\right)
    \asymp
    z^{(n-1)/2}.
\]
Therefore
\[
    p_\rho
    \asymp
    \left(\frac{\varepsilon}{L}\right)^{n-1}.
\]
Since $\log(1-p_\rho\beta)\asymp -p_\rho\beta
    $ as $p_\rho\downarrow0, $
the sufficient polling budget satisfies
\[
    \bar{m}_{\min}(q_0)
    =
    \mathcal O\left(\frac{1}{p_\rho}\right)
    =
    \mathcal O\left(
        \left(\frac{L}{\varepsilon}\right)^{n-1}
    \right).
\]
Thus, for fixed \(L\), choosing
\[
    \bar{m}=\mathcal O(\varepsilon^{1-n})
\]
is sufficient to keep the success probability uniformly bounded away from
\(1/2\). This dimensional dependence reflects the geometric difficulty of
finding a descent direction in a purely nonsmooth landscape via a dense set of random directions.
The above argument, together with the use of the spherical cap, is somehow related to the probabilistic-descent framework of
Gratton et al.~\cite[Appendix~B]{GrattonRoyerVicenteZhang2015}. There, for
smooth objectives, random polling directions are used to obtain, with high
probability, a direction lying in a spherical cap around the negative gradient;
and a fixed positive cosine threshold  (for
instance of order \(1/\sqrt n\)) is sufficient to guarantee descent. In our nonsmooth
Goldstein setting, the spherical-cap idea is used, but nonsmoothness lets the cap shrink with the stationarity
tolerance, leading to the polling budget
\(\bar m=O(\varepsilon^{1-n})\). We finally note that the constant
\(c_{\max}=2\mu_h \bar{m}(\Imax+1)\) depends on \(\bar{m}\). We can however
guarantee that \(c_{\max}\) remains uniformly bounded by tightening of
the sample-average accuracy, that is by increasing the batch size in the stochastic estimates construction, if needed.
\end{remark}
Recall that an iteration is called \emph{successful} if at least one of
the \(\bar{m}\) tested directions satisfies the sufficient-decrease test, and
\emph{unsuccessful} otherwise. Equivalently, success corresponds to
\(H_k\ge 0\), whereas failure corresponds to \(H_k=-1\). By the stepsize
update rule, an unsuccessful iteration contracts the polling radius by
\(\gamma\), while a successful iteration expands the next polling radius by at
least \(\gamma^{-1}\). The good-event indicator used in the stopping-time argument is instead
\(J_k=\mathbf 1_{\mathcal I_k}\). Hence \(J_k=1\) does not define success in
general; rather, on the event
\(\{T_{r,\varepsilon}>k\}\cap\{\Delta_k\le\Delta_{r,\varepsilon}\}\),
Lemma~\ref{lem:good-event-sufficient-decrease} shows that
\(\mathcal I_k\) implies \(H_k\ge0\). The following lemma verifies the radius-dynamics condition in
Assumption~\ref{ass:5}.

\begin{lemma}[Verification of Assumption~\ref{ass:5}(ii)]
\label{lem:assumption-ii-holds}
Let \(\gamma\in(0,1)\) and set
$ \lambda:=-\log\gamma>0.$
Let
\[
    \bar\Delta_{\goldelta,\varepsilon}
    =
    \Delta_0\gamma^{j_{\goldelta,\varepsilon}},
    \qquad
    j_{\goldelta,\varepsilon}\in\mathbb Z,\qquad j_{r,\varepsilon} \leq 0,
\]
be such that
\[
    \bar\Delta_{\goldelta,\varepsilon}
    \le
    \Delta_{\goldelta,\varepsilon},
\]
where \(\Delta_{\goldelta,\varepsilon}\) is defined in
Lemma~\ref{lem:good-event-sufficient-decrease}. Then Assumption~\ref{ass:5}(ii) holds with
\[
    q=q_{\bar{m}}
    :=
    \beta\left[1-(1-p_\rho\beta)^{\bar{m}}\right].
\]
\end{lemma}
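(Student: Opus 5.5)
The plan is to verify the two parts of Assumption~\ref{ass:5}(ii) separately. First come the probability bound on \(W_{k+1}\), then the radius comparison inequality~\eqref{eq:dynamics}, which I will prove by cases on the sign of \(W_{k+1}\).

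\textbf{Probability part.} I will invoke Lemma~\ref{lem:good-event-probability}. Its hypotheses are checked as follows.
\begin{itemize}
\item Uniformity and independence of \(D_{k,1},\ldots,D_{k,\bar m}\) given \(\mathcal A_k=\mathcal G_{k-1}\) holds by construction of the algorithm.
\item The \(\beta\)-probabilistic \(\varepsilon_f\)-accuracy of \(\mathcal E_{k,0}\) and \(\mathcal E_{k,m}\) follows from Remark~\ref{rem:A4-to-BPE}. That remark is stated conditionally on \(\mathcal F_{k-1}\supseteq\sigma(\mathcal A_k,D_{k,m})\), so the tower property transfers the bound to \(\mathcal A_k\) and to \(\sigma(\mathcal A_k,D_{k,m})\).
\item The required conditional independence of \(\mathcal E_{k,0}\) from the pairs \((D_{k,m},\mathcal E_{k,m})\), and among these pairs, follows from Assumption~\ref{ass:indep} combined with the independent sampling of the directions.
\end{itemize}
The lemma then gives \(\mathbb P(W_{k+1}=+1\mid\mathcal A_k)\ge q_{\bar m}\).

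Assumption~\ref{ass:5}(ii) is written with equality. I expect this to be the main (though mild) obstacle. My first approach is to observe that the argument of \cite[Theorem~2]{MR4151319} only uses the lower bound \(\mathbb P(W_{k+1}=+1\mid\mathcal A_k)\ge q\), because it compares the radius process with a random walk that goes up with probability at least \(q\). If exact equality is insisted on, I would fall back on a thinning device: replace \(W_{k+1}\) by \(W'_{k+1}:=W_{k+1}\) on \(\{W_{k+1}=-1\}\), and on \(\{W_{k+1}=+1\}\) set \(W'_{k+1}=-1\) with an auxiliary independent probability chosen to make \(\mathbb P(W'_{k+1}=+1\mid\mathcal A_k)=q_{\bar m}\) exactly. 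Since \(W'_{k+1}\le W_{k+1}\), the dynamics inequality below remains true for \(W'\).

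\textbf{Dynamics part.} I will argue on \(\{T_{\goldelta,\varepsilon}>k\}\).

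First I record a lattice property. Every update multiplies \(\Delta_k\) by an integer power of \(\gamma\), so \(\Delta_k\in\{\Delta_0\gamma^j:j\in\mathbb Z\}\). Since \(\bar\Delta_{\goldelta,\varepsilon}\) lies on the same lattice, \(\Delta_k>\bar\Delta_{\goldelta,\varepsilon}\) implies \(\Delta_k\ge\gamma^{-1}\bar\Delta_{\goldelta,\varepsilon}\). I also note that \(e^{\lambda W_{k+1}}\) equals \(\gamma^{-1}\) when \(W_{k+1}=+1\) and \(\gamma\) when \(W_{k+1}=-1\).

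\emph{Case \(W_{k+1}=-1\).} Every update satisfies \(\Delta_{k+1}\ge\gamma\Delta_k=\Delta_k e^{-\lambda}\), which dominates the minimum in~\eqref{eq:dynamics}.

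\emph{Case \(W_{k+1}=+1\) with \(\Delta_k\le\bar\Delta_{\goldelta,\varepsilon}\).} Here \(\Delta_k\le\bar\Delta_{\goldelta,\varepsilon}\le\Delta_{\goldelta,\varepsilon}\), so we are on \(B_k\cap\mathcal I_k\). Lemma~\ref{lem:good-event-sufficient-decrease} then makes the iteration successful, and the update rule gives \(\Delta_{k+1}\ge\gamma^{-1}\Delta_k=\Delta_k e^{\lambda}\).

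\emph{Case \(W_{k+1}=+1\) with \(\Delta_k>\bar\Delta_{\goldelta,\varepsilon}\).} By the lattice property, \(\Delta_{k+1}\ge\gamma\Delta_k\ge\bar\Delta_{\goldelta,\varepsilon}\).

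In all three cases \(\Delta_{k+1}\ge\min\{\Delta_k e^{\lambda W_{k+1}},\bar\Delta_{\goldelta,\varepsilon}\}\) on \(\{T_{\goldelta,\varepsilon}>k\}\). Multiplying by the indicator gives~\eqref{eq:dynamics}. Together with the form \(\bar\Delta_{\goldelta,\varepsilon}=\Delta_0e^{\lambda j}\) for an integer \(j\) (since \(\gamma^{j}=e^{-\lambda j}\)), this completes the verification of Assumption~\ref{ass:5}(ii).
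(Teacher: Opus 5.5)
Your argument is essentially the paper's proof. It uses the same mesh property $\Delta_k\in\{\Delta_0\gamma^j\}$ and the same case analysis: the bound $\Delta_{k+1}\ge\gamma\Delta_k$ when $W_{k+1}=-1$ or $\Delta_k>\bar\Delta_{\goldelta,\varepsilon}$, and Lemma~\ref{lem:good-event-sufficient-decrease} when $W_{k+1}=+1$ and $\Delta_k\le\bar\Delta_{\goldelta,\varepsilon}$. It also makes the same appeal to Lemma~\ref{lem:good-event-probability}. Your treatment of the equality $\mathbb P(W_{k+1}=+1\mid\mathcal A_k)=q$ is more careful than the paper's, which silently accepts a lower bound. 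The thinning works once the auxiliary uniforms are added to the filtration $(\mathcal A_k)$. This is harmless, because they are independent of the algorithm, so neither the drift condition nor the stopping-time property is affected.

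A few points still need fixing.

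First, Assumption~\ref{ass:5}(ii) requires $q>1/2$, and nothing in your argument yields $q_{\bar m}>1/2$. The paper's proof imposes this explicitly, and you must too. It needs $\beta>1/2$ and $\bar m$ large enough, as in Remark~\ref{rem:M-lower-bound}.

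Second, your claim that the independence hypotheses of Lemma~\ref{lem:good-event-probability} follow from Assumption~\ref{ass:indep} is not right as stated. That assumption gives independence only conditionally on $\mathcal F_{k-1}$. The conditional law of $\tilde f_{k,m}^{(0)}$ given $\mathcal F_{k-1}$ may depend on the other directions $D_{k,m'}$, so the pairs $(D_{k,m},\mathcal E_{k,m})$ need not be independent given $\mathcal A_k$. The bound survives if you condition on $\mathcal F_{k-1}$ first. Set $N_k:=\{m:D_{k,m}\in\mathcal C_\rho(D_k^*)\}$. Then Assumption~\ref{ass:indep} and Remark~\ref{rem:A4-to-BPE} give $\mathbb P(\mathcal I_k\mid\mathcal F_{k-1})\ge\beta\bigl[1-(1-\beta)^{|N_k|}\bigr]$. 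Since $|N_k|\sim\mathrm{Bin}(\bar m,p_\rho)$ given $\mathcal A_k$, averaging gives exactly the lower bound $q_{\bar m}$.

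Finally, Assumption~\ref{ass:5}(ii) requires the form $\Delta_0e^{\lambda j}$ with $j\le 0$, i.e.\ $\bar\Delta_{\goldelta,\varepsilon}\le\Delta_0$. Since $\gamma^{j}=e^{-\lambda j}$, this needs $j_{\goldelta,\varepsilon}\ge 0$. So the sign condition in the statement is reversed, and merely having an integer exponent, as your last sentence asserts, is not enough.
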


\begin{proof}
The inequality in Assumption~\ref{ass:5}(ii) is trivial when
\(\mathbf 1_{\{T_{\goldelta,\varepsilon}>k\}}=0\). Hence, assume that
\(T_{\goldelta,\varepsilon}>k\). Since the stepsizes lie on the geometric mesh, we can write
\[
    \Delta_k=\Delta_0\gamma^{i_k}
    \qquad\text{for some } i_k\in\mathbb Z.
\]
If \(\Delta_k>\bar\Delta_{\goldelta,\varepsilon}\), then
\[
    \Delta_k\ge \gamma^{-1}\bar\Delta_{\goldelta,\varepsilon}.
\]
Moreover, from the stepsize update rule, we always have
\[
    \Delta_{k+1}\ge \gamma\Delta_k.
\]
Thus
\[
    \Delta_{k+1}
    \ge
    \gamma\Delta_k
    \ge
    \bar\Delta_{\goldelta,\varepsilon},
\]
and therefore
\[
    \Delta_{k+1}
    \ge
    \min\left\{
        \Delta_k e^{\lambda W_{k+1}},
        \bar\Delta_{\goldelta,\varepsilon}
    \right\}.
\]
Now assume that
\[
    \Delta_k\le\bar\Delta_{\goldelta,\varepsilon}
    \le
    \Delta_{\goldelta,\varepsilon}.
\]
If \(W_{k+1}=+1\), then \(\mathcal I_k\) occurs. By
Lemma~\ref{lem:good-event-sufficient-decrease}, iteration \(k\) is successful.
Hence
\[
    \Delta_{k+1}
    \ge
    \gamma^{-1}\Delta_k
    =
    \Delta_k e^\lambda
    =
    \Delta_k e^{\lambda W_{k+1}}.
\]
If \(W_{k+1}=-1\), then, independently of whether the iteration is successful
or unsuccessful, the update rule gives
\[
    \Delta_{k+1}
    \ge
    \gamma\Delta_k
    =
    \Delta_k e^{-\lambda}
    =
    \Delta_k e^{\lambda W_{k+1}}.
\]
Therefore, in both cases,
\[
    \Delta_{k+1}
    \ge
    \min\left\{
        \Delta_k e^{\lambda W_{k+1}},
        \bar\Delta_{\goldelta,\varepsilon}
    \right\}.
\]
Finally, by Lemma~\ref{lem:good-event-probability},
\[
    \mathbb P(W_{k+1}=+1\mid\mathcal A_k)
    =
    \mathbb P(\mathcal I_k\mid\mathcal A_k)
    \ge
    \beta\left[1-(1-p_\rho\beta)^{\bar{m}}\right]
    =
    q_{\bar{m}}.
\]
If \(\bar{m}\) is chosen so that \(q_{\bar{m}}>1/2\), then
Assumption~\ref{ass:5}(ii) holds with \(q=q_{\bar{m}}\).
\end{proof}

\begin{remark}[Choice of $\bar{\Delta}_{r,\varepsilon}$ and mesh index $j_{r,\varepsilon}$]\label{rem: choice of delta'}
Lemma~\ref{lem:good-event-sufficient-decrease} and Lemma~\ref{lem:good-event-probability} provide an analytic threshold
\(\Delta_{\goldelta,\varepsilon}\) such that, for
\(k<T_{\goldelta,\varepsilon}\), the spherical-cap argument yields a uniformly
positive probability of success whenever
\(
    \Delta_k\le\Delta_{\goldelta,\varepsilon}.
\)
For the stopping-time theorem, we need a threshold belonging to
the geometric mesh generated by the radius updates. Since the updates multiply
the radius by powers of \(\gamma\), the radii belong to the mesh
\[
    \Delta_0\gamma^j,
    \qquad j\in\mathbb Z.
\]
We therefore define
\[
    j_{\goldelta,\varepsilon}
    :=
    \min\left\{
        j\in\mathbb Z:
        \Delta_0\gamma^j\le\Delta_{\goldelta,\varepsilon}
    \right\},
\]
and set
\[
    \bar\Delta_{\goldelta,\varepsilon}
    :=
    \Delta_0\gamma^{j_{\goldelta,\varepsilon}}.
\]
Then
\[
    \bar\Delta_{\goldelta,\varepsilon}
    \le
    \Delta_{\goldelta,\varepsilon},
\]
so Lemma~\ref{lem:good-event-sufficient-decrease} and  Lemma~\ref{lem:good-event-probability} apply whenever
 \(\Delta_k\le\bar\Delta_{\goldelta,\varepsilon}\).
\end{remark}
We are now ready to combine the merit-function drift, the radius recursion, and
the lower bound on the good-event probability within the stopping-time framework.
This gives an expected bound on the number of outer iterations needed to reach
\((\goldelta,\varepsilon)\)-Goldstein stationarity. It is important to highlight that assuming $c_{max}$ is uniformly bounded (see end of Remark \ref{rem:M-lower-bound}) keeps the constants in the
outer-iteration bound of Corollary~\ref{cor:expected-stopping} uniform in \(\varepsilon\).

\begin{corollary}[Expected stopping time]
\label{cor:expected-stopping}
Assume the merit-function drift condition given in  Lemma~\ref{lem:phi_drift_main}
holds with $\Theta:=\eta(1-\gamma^{\Dexp})-c_{\max}>0,$
and let the radius recursion and success-probability bound be as in
Lemma~\ref{lem:good-event-sufficient-decrease} and Lemma~\ref{lem:good-event-probability}. Suppose that \(\bar{m}\) is chosen so that
\[
    q_{\bar{m}}
    :=
    \beta\left[1-(1-p_\rho\beta)^{\bar{m}}\right]
    > q_0>
    \frac12 .
\]
Then we obtain the expected iteration complexity bound
\[
    \mathbb E[T_{\goldelta,\varepsilon}]
    =
    \mathcal O\left(
        \max\left\{
            \goldelta^{-\Dexp},
            \varepsilon^{-\Dexp/(\Dexp-1)}
        \right\}
    \right).
\]

\end{corollary}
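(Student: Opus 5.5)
The plan is to verify the three parts of Assumption~\ref{ass:5}, apply Theorem~\ref{t:expected complexity}, and then bound $h(\bar\Delta_{\goldelta,\varepsilon})^{-1}=\bar\Delta_{\goldelta,\varepsilon}^{-\Dexp}$ in terms of $\goldelta$ and $\varepsilon$.

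\textbf{Step 1: Assumption~\ref{ass:5}.} Part (i) comes from Remark~\ref{rem:compact-tested-points}, taking $\Delta_{\max}$ to be the smallest mesh point above $\bar\Delta$. Part (iii) is the tower-property argument already given after Theorem~\ref{t:expected complexity}, with $\Theta=\eta(1-\gamma^{\Dexp})-c_{\max}>0$ and $h(\Delta)=\Delta^{\Dexp}$. Part (ii) is Lemma~\ref{lem:assumption-ii-holds}, using the mesh threshold $\bar\Delta_{\goldelta,\varepsilon}$ of Remark~\ref{rem: choice of delta'} and $q=q_{\bar m}>q_0>1/2$.

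One technical point needs care. Theorem~\ref{t:expected complexity} is stated with equality $\mathbb P(W_{k+1}=+1\mid\mathcal A_k)=q$, whereas Lemma~\ref{lem:good-event-probability} only gives the inequality $\ge q$. I would handle this in one of two ways. The first is to note that the proof in \cite{MR4151319} only uses the lower bound. The second is to thin the good event with an independent auxiliary coin, which lowers the conditional success probability to exactly $q_0$ while keeping the comparison inequality \eqref{eq:dynamics}, since that inequality only weakens when $W_{k+1}$ is switched from $+1$ to $-1$. In either case Theorem~\ref{t:expected complexity} yields
\[
\mathbb E[T_{\goldelta,\varepsilon}]\le \frac{q_0}{2q_0-1}\cdot\frac{\Phi_0}{\Theta\,\bar\Delta_{\goldelta,\varepsilon}^{\Dexp}}+1 .
\]
Here $\Phi_0$ is deterministic because $X_0$ and $\Delta_0$ are deterministic.

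\textbf{Step 2: bounding $\bar\Delta_{\goldelta,\varepsilon}$ from below.} By the minimality of $j_{\goldelta,\varepsilon}$ in Remark~\ref{rem: choice of delta'}, we have $\bar\Delta_{\goldelta,\varepsilon}\ge\gamma\Delta_{\goldelta,\varepsilon}$, provided $\Delta_{\goldelta,\varepsilon}<\Delta_0$ (otherwise the bound is trivial). I would take $\Delta_{\goldelta,\varepsilon}$ to be a fixed fraction, say one half, of the minimum in Lemma~\ref{lem:good-event-sufficient-decrease}. This gives
\[
\bar\Delta_{\goldelta,\varepsilon}^{-\Dexp}\lesssim \max\Bigl\{\goldelta^{-\Dexp},\ \bigl(\tfrac{\theta+2\varepsilon_f}{c_1}\bigr)^{\Dexp/(\Dexp-1)}\Bigr\}.
\]
It remains to show $c_1\gtrsim\varepsilon$. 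For this I would choose $\rho$ explicitly, for instance $\sqrt{1-\rho^2}=\varepsilon/(2L)$ with $\varepsilon\le L$. Then $\rho\ge\sqrt{3}/2$, so
\[
c_1=\rho\varepsilon-\sqrt{1-\rho^2}\,L\ge\Bigl(\tfrac{\sqrt3}{2}-\tfrac12\Bigr)\varepsilon .
\]
Hence $c_1^{-\Dexp/(\Dexp-1)}=\mathcal O(\varepsilon^{-\Dexp/(\Dexp-1)})$, which gives the claimed $\mathcal O(\max\{\goldelta^{-\Dexp},\varepsilon^{-\Dexp/(\Dexp-1)}\})$. This choice of $\rho$ matches the scaling $1-\rho^2\asymp(\varepsilon/L)^2$ in Remark~\ref{rem:M-lower-bound}.

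\textbf{Main obstacle: uniformity of the constants in $\varepsilon$.} The bound is only meaningful if $\theta$, $\varepsilon_f$, $\Theta$ and $q_0$ do not depend on $\varepsilon$. The difficulty is that $\bar m=\mathcal O(\varepsilon^{1-n})$ enters $c_{\max}=2\mu_h\bar m(\Imax+1)$, which in turn affects $\Theta$ and, through \eqref{ass:eta} and \eqref{ass:theta_eta_compatibility}, the choices of $\eta$ and $\theta$. I would settle this by invoking the standing convention stated before the corollary: $c_{\max}$ is kept uniformly bounded by tightening the batch size, that is, by decreasing $\mu_h$ proportionally to $1/\bar m$. Then $\eta$, $\theta$ and $\Theta$ can be fixed independently of $\varepsilon$. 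Likewise, $\varepsilon_f$ depends only on $\beta$ and $\varepsilon_h$ through Remark~\ref{rem:A4-to-BPE}, and $q_0$ is fixed by the choice of $\bar m$. With these constants fixed, the prefactor $q_0\Phi_0/((2q_0-1)\Theta)$ is an $\varepsilon$-independent constant, which completes the argument.
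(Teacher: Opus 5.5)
Your proposal is correct and follows the same route as the paper. The paper verifies items (i) and (iii) of Assumption~\ref{ass:5} from Remark~\ref{rem:compact-tested-points} and the drift Lemma~\ref{lem:phi_drift_main}, and item (ii) from Lemma~\ref{lem:assumption-ii-holds}. It then applies the Blanchet--Cartis--Menickelly--Scheinberg stopping-time bound and uses $\bar\Delta_{\goldelta,\varepsilon}\asymp\min\{\goldelta,\varepsilon^{1/(\Dexp-1)}\}$. Your additional details make rigorous what the paper only asserts with ``$\asymp$'': the thinning (or lower-bound) argument for $\mathbb P(W_{k+1}=+1\mid\mathcal A_k)\ge q$ versus equality, the mesh bound $\bar\Delta_{\goldelta,\varepsilon}>\gamma\Delta_{\goldelta,\varepsilon}$, the explicit choice of $\rho$ giving $c_1\gtrsim\varepsilon$, and the appeal to a uniformly bounded $c_{\max}$.
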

\begin{proof}
By Assumption (i) (bounded step sizes) and Lemma~\ref{lem:phi_drift_main} (merit-function drift with $h(\Delta)=\Delta^{\Dexp}$), items (i) and (iii) of Assumptions \ref{ass:5} hold. Lemma~\ref{lem:assumption-ii-holds} verifies item (ii) with the Bernoulli driver $W_{k+1}\in\{-1,+1\}$, parameter $q=q_{\bar{m}}>\tfrac12$, and $\lambda=\log(\gamma^{-1})$, together with the cap at $\bar{\Delta}_{\goldelta, \varepsilon}$. Substituting these constants into the renewal-reward bound (Theorem~\ref{t:expected complexity}) gives the stated inequality. More specifically, Theorem~\ref{t:expected complexity} applies with
\[
    h(\Delta)=\Delta^{\Dexp},
    \qquad
    q=q_{\bar{m}},
    \qquad
    \Delta_{\goldelta,\varepsilon}
    =
    \bar\Delta_{\goldelta,\varepsilon},
\]
and gives
\begin{equation}\label{eq:Cartis}
    \mathbb E[T_{\goldelta,\varepsilon}]
    \le
    \frac{q_{\bar{m}}}{2q_{\bar{m}}-1}
    \frac{\Phi_0}{\Theta\bar\Delta_{\goldelta,\varepsilon}^{\Dexp}}
    +1 .
\end{equation}
By construction of the analytic threshold in Lemma~\ref{lem:good-event-sufficient-decrease}, we have that $\Delta_{\goldelta,\varepsilon}
    \asymp
    \min\left\{
        \goldelta,
        \varepsilon^{1/(\Dexp-1)}
    \right\}.$ 
Therefore, it is easy to see that also
$ \bar\Delta_{\goldelta,\varepsilon}
    \asymp
    \min\left\{
        \goldelta,
        \varepsilon^{1/(\Dexp-1)}
    \right\}.$
It thus follows that
\[
    \frac{1}{\bar\Delta_{\goldelta,\varepsilon}^{\Dexp}}
    =
    \mathcal O\left(
        \max\left\{
            \goldelta^{-\Dexp},
            \varepsilon^{-\Dexp/(\Dexp-1)}
        \right\}
    \right).
\]
Substituting this estimate into~\eqref{eq:Cartis} gives
\[
    \mathbb E[T_{\goldelta,\varepsilon}]
    =
    \mathcal O\left(
        \max\left\{
            \goldelta^{-\Dexp},
            \varepsilon^{-\Dexp/(\Dexp-1)}
        \right\}
    \right),
\]
which proves the claim.

\end{proof}
The previous corollary counts outer iterations. We now translate this bound into
a tested-point complexity estimate by using the fact that, at each iteration, the
algorithm evaluates at most \(\bar{m}\) directions and at most \(\Imax+1\)
extrapolation levels per direction. Also in this case, we note that tightening
the sample-average accuracy to make $c_{max}$ uniformly bounded may increase the raw stochastic sample complexity, but not the tested-point complexity considered in Corollary~\ref{cor:finalcompl}.

\begin{corollary}\label{cor:finalcompl}[Expected tested-point complexity to Goldstein stationarity]
\label{cor:tested-point-Goldstein}
Let \(T_{\goldelta,\varepsilon}\) be the stopping time defined in
\eqref{Teps}. Assume the hypotheses of Corollary~\ref{cor:expected-stopping}. 
Fix $q_0\in(1/2,\beta)$, and choose \(\bar{m}\) large enough so that
\(q_{\bar{m}}\ge q_0\). Suppose moreover that \(\Imax\) is fixed and that the
line search tests at most \(\bar{m}\) directions per outer iteration and at most
\(\Imax+1\) extrapolation levels per direction. Assume $\bar{m}$ is chosen
according to Remark~\ref{rem:M-lower-bound}. Then
\[
    \mathbb E\!\left[
        \sum_{k=0}^{T_{\goldelta,\varepsilon}-1} (1+P_k)
    \right]
    =
    \mathcal O\left(
        \varepsilon^{1-n}
        \max\left\{
            \goldelta^{-\Dexp},
            \varepsilon^{-\Dexp/(\Dexp-1)}
        \right\}
    \right),
\]
where \(P_k\) denotes the number of trial points evaluated during iteration
\(k\), excluding the baseline point \(X_k\).

\end{corollary}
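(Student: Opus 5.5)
The argument bounds the per-iteration cost deterministically and then multiplies by the expected number of iterations from Corollary~\ref{cor:expected-stopping}. At iteration \(k\) the algorithm evaluates the baseline \(X_k\) once (the term \(1\)). It then runs Algorithm~\ref{alg:lsep} on at most \(\bar m\) directions, and each line search tests at most \(\Imax+1\) levels \(i=0,\ldots,\Imax\), stopping at the first failure. Hence, almost surely and for every \(k\),
\[
    1+P_k\le 1+\bar m(\Imax+1)=:c_P .
\]
This bound is deterministic: it depends neither on the oracle outcomes nor on the event \(\{T_{\goldelta,\varepsilon}>k\}\).

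Next, I write the total cost as
\[
\sum_{k=0}^{T_{\goldelta,\varepsilon}-1}(1+P_k)=\sum_{k\ge0}\mathbf 1_{\{T_{\goldelta,\varepsilon}>k\}}(1+P_k)\le c_P\,T_{\goldelta,\varepsilon}.
\]
Taking expectations (Tonelli, all terms nonnegative) gives
\[
\mathbb E\Bigl[\sum_{k=0}^{T_{\goldelta,\varepsilon}-1}(1+P_k)\Bigr]\le c_P\,\mathbb E[T_{\goldelta,\varepsilon}].
\]
No Wald identity is needed because the bound on \(P_k\) is uniform. If one prefers, Wald's identity with the \(\mathcal A_k\)-stopping time could be used instead, but it is unnecessary here.

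Finally, I substitute the two ingredients. From Remark~\ref{rem:M-lower-bound}, with \(q_0\) fixed and \(\rho\) chosen so that \(1-\rho^2\asymp(\varepsilon/L)^2\), we get \(p_\rho\asymp(\varepsilon/L)^{n-1}\), so \(\bar m=\mathcal O(\varepsilon^{1-n})\) suffices. Since \(\Imax\) is fixed, this gives \(c_P=\mathcal O(\varepsilon^{1-n})\). From~\eqref{eq:Cartis} and Corollary~\ref{cor:expected-stopping}, \(\mathbb E[T_{\goldelta,\varepsilon}]=\mathcal O(\max\{\goldelta^{-\Dexp},\varepsilon^{-\Dexp/(\Dexp-1)}\})\). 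Multiplying the two bounds yields the claim.

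The main obstacle is making sure the constant hidden in the bound on \(\mathbb E[T_{\goldelta,\varepsilon}]\) stays independent of \(\varepsilon\) when \(\bar m\) grows like \(\varepsilon^{1-n}\). Three quantities need checking:
\begin{itemize}
\item \(q_{\bar m}/(2q_{\bar m}-1)\le q_0/(2q_0-1)\) uniformly, since \(q_{\bar m}\ge q_0>1/2\) and the map \(q\mapsto q/(2q-1)\) is decreasing.
\item \(\Theta=\eta(1-\gamma^\Dexp)-c_{\max}\) must stay bounded below even though \(c_{\max}=2\mu_h\bar m(\Imax+1)\) grows with \(\bar m\). I would invoke the standing convention stated before Corollary~\ref{cor:expected-stopping} and at the end of Remark~\ref{rem:M-lower-bound}: the batch size is increased so that \(\mu_h\bar m\) stays bounded, hence \(c_{\max}\) is uniformly bounded and \(\Theta\) is bounded away from zero with \(\eta,\theta\) fixed.
\item \(\Phi_0\) is deterministic and independent of \(\varepsilon\).
\end{itemize}
With these uniformities, and noting that \(c_1\) in Lemma~\ref{lem:good-event-sufficient-decrease} is of order \(\varepsilon\) for this choice of \(\rho\) (so \(\bar\Delta_{\goldelta,\varepsilon}\asymp\min\{\goldelta,\varepsilon^{1/(\Dexp-1)}\}\)), the product bound is exactly the stated order.
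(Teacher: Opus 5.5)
Your proposal is correct and follows essentially the same route as the paper: you bound the per-iteration cost deterministically by $1+\bar m(\Imax+1)$, multiply by $\mathbb E[T_{\goldelta,\varepsilon}]$ from Corollary~\ref{cor:expected-stopping}, and substitute $\bar m=\mathcal O(\varepsilon^{1-n})$ from Remark~\ref{rem:M-lower-bound}. You also check explicitly that the constants in the bound on $\mathbb E[T_{\goldelta,\varepsilon}]$ stay uniform in $\varepsilon$, namely via $q_{\bar m}\ge q_0$ and the bounded-$c_{\max}$ convention; the paper only notes this point in passing.
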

\begin{proof}
At every outer iteration, the algorithm tests at most \(\bar{m}\) directions. For
each direction, the line search evaluates at most the levels
$ i=0,1,\ldots,\Imax.$
Therefore the number of trial points evaluated during iteration \(k\) satisfies
\[
    P_k\le \bar{m}(\Imax+1)
    \qquad\text{a.s.}
\]
Summing up to the stopping time gives
\[
    \sum_{k=0}^{T_{\goldelta,\varepsilon}-1} P_k
    \le
    \bar{m}(\Imax+1)T_{\goldelta,\varepsilon}.
\]
Taking expectations yields
\[
    \mathbb E\!\left[
        \sum_{k=0}^{T_{\goldelta,\varepsilon}-1} P_k
    \right]
    \le
    \bar{m}(\Imax+1)\mathbb E[T_{\goldelta,\varepsilon}].
\]
If the baseline estimate is counted once per outer iteration, the same argument
gives
\[
    \mathbb E\!\left[
        \sum_{k=0}^{T_{\goldelta,\varepsilon}-1} (1+P_k)
    \right]
    \le
    \bigl(1+\bar{m}(\Imax+1)\bigr)
    \mathbb E[T_{\goldelta,\varepsilon}].
\]
The rest follows by substituting the bound from
Corollary~\ref{cor:expected-stopping}. Finally, if \(\bar{m}\) is chosen minimally to ensure
\(q_{\bar{m}}>1/2\), the scaling \(\bar{m}=\mathcal O(\varepsilon^{1-n})\) follows from
Remark~\ref{rem:M-lower-bound}. Thus the result is proved.

\end{proof}

\begin{remark}[Tested points versus raw stochastic oracle samples]
Corollary~\ref{cor:tested-point-Goldstein} counts calls to the estimator
\(\tilde f\), or equivalently the number of tested points. It does not count the
raw stochastic oracle samples used to construct each estimate. If \(\tilde f\) is computed by leveraging a batch size \(m_k\) of function evaluations at
iteration \(k\), then the raw sample cost is proportional to
\[
    \sum_{k<T_{\goldelta,\varepsilon}} m_k P_k,
\]
where \(P_k\) is the number of estimator calls at iteration \(k\). Thus Corollary~\ref{cor:tested-point-Goldstein} should be interpreted as a
tested-point complexity bound, not as a raw stochastic-oracle sample complexity
bound.
\end{remark}

\begin{remark}[Comparison with smooth stochastic direct search]
\label{rem:complexity_comparison_smooth}
To put the expected total tested-point complexity of Corollary \ref{cor:finalcompl} into perspective, consider the symmetric target regime where we seek an $(\varepsilon, \varepsilon)$-Goldstein stationary point (i.e., setting $\goldelta = \varepsilon$). The complexity bound algebraically factors as:
\[
   \mathcal O\Bigl( \varepsilon^{1-n}  \max\bigl\{\varepsilon^{-p},\ \varepsilon^{-\frac{p}{p-1}}\bigr\} \Bigr) 
   = \mathcal O\Bigl( \varepsilon^{1-n} \cdot \varepsilon^{-\frac{p}{\min(p-1, 1)}} \Bigr).
\]
This factorization isolates the theoretical cost of our direct nonsmooth approach. The second term,
\(\mathcal O(\varepsilon^{-p/\min(p-1,1)})\), is exactly the expected iteration-complexity
bound established by \cite{MR4359469} for stochastic directional direct search on smooth
\((C^{1,1})\) objective functions. In that setting, passing from iterations to tested points only
multiplies the bound by the cardinality of the positive spanning set, which is uniformly bounded
(for instance, of order \(n\) for standard positive bases). By contrast, in our nonsmooth random-polling
analysis, such a polling-set factor is replaced by
\(\mathcal O(\varepsilon^{1-n})\). This term represents the geometric cost of sampling enough
uniform random directions to hit a shrinking spherical cap aligned with a Goldstein descent
direction, without the deterministic alignment guarantee provided by the cosine measure of
positive spanning sets in smooth settings.
\end{remark}
It is also useful to make a comparison with randomized-smoothing
zeroth-order methods, such as those of Nesterov and Spokoiny~\cite{MR3627456}
and Kornowski and Shamir~\cite{JMLR:kornowskishamir}.
Those methods replace the original objective \(f\) by a smoothed surrogate
\(f_\mu\) and use random finite-difference or directional-derivative estimators
to approximate gradients of the smoothed function. This approach can lead to
sharper worst-case bounds in regimes where the smoothed problem is an
appropriate proxy for the original one. However, the resulting performance
depends on the choice of the smoothing radius \(\mu\) and on the stochastic
oracle structure. A larger value of \(\mu\) gives a smoother and more stable
surrogate but may introduce a larger bias between \(f_\mu\) and \(f\); a
smaller value of \(\mu\) reduces this bias but can make gradient estimators
more sensitive to stochastic noise. Thus, as \(\mu\) is reduced to improve the
approximation of \(f\) by \(f_\mu\), one may need additional sampling or
variance-control mechanisms to stabilize the estimate.\\
By contrast, the DSE method analyzed here does not optimize a smoothed
surrogate and does not attempt to reconstruct a gradient. It evaluates
candidate points and accepts steps through a sufficient-decrease test on
stochastic function estimates directly. The price paid for this direct
nonsmooth treatment is the worse cap-probability factor in the expected
complexity bound. This factor should however be understood as a worst-case
geometric penalty.
Moreover, for locally Lipschitz functions, points of nondifferentiability are
negligible by Rademacher's theorem  (see, e.g.,
\cite[Section~2.5]{Clarke1990}). Hence random trial points typically lie in
smooth pieces of the objective, where the set of successful directions may be
larger than the single certified cap used in the proof. Thus the true chance of
success can be substantially larger than what the conservative lower bound used
in the analysis says. The advantage in our framework is that no smoothing radius
has to be tuned. This helps explain why, despite the more conservative
worst-case complexity, the method is competitive with random-gradient smoothing
techniques in our empirical benchmarks.

%% file: sections/numerical_results.tex
\section{Numerical Results}
\label{sec:numerics}

We compare our direct search with extrapolation (DSE) against four baselines:
the stochastic direct search (SDS) of~\cite{MR4758401}, StoMADS~\cite{MR4238147},
the zeroth-order gradient-smoothing method (GS) of Nesterov and Spokoiny~\cite{MR3627456},
and the Optimal Stochastic Nonsmooth Nonconvex Optimization Algorithm (OSNNOA)
of Kornowski and Shamir~\cite{JMLR:kornowskishamir}.
Unless specified otherwise, we choose the sufficient-decrease exponent as $\Dexp = 2$ for all methods,
and we use the same budget and profiling protocol as in~\cite{MR4758401}.
The implementation of the proposed method and the code used to reproduce the
numerical experiments are available on Github:  \url{https://github.com/APalmier99/DSE_StDFOpt.git}.

\subsection{DSE Implementation and Solvers Compared}
\label{subsec:impl}

The DSE implementation used in the experiments is a practical variant of the
theoretical scheme analyzed in previous sections.
As in the SDS implementation from~\cite[Section 5]{MR4758401},
we adopt a multi-phase strategy that improves robustness and speed in practice
combining line searches along coordinates with line searches along dense directions.
SDS follows the implementation choices reported in~\cite[Section 5]{MR4758401}.
For StoMADS we use the authors' reference settings. For GS we set parameters as
discussed in~\cite{MR3627456}. For OSNNOA, we implemented the randomized-smoothing
scheme described in~\cite{JMLR:kornowskishamir}, using the proposed stochastic gradient
estimator and parameter choices as guidance.
Amount of noise, oracle budget, starting point, initial stepsizes are fixed across all algorithms for fairness following again the guidelines in~\cite[Section 5]{MR4758401}.

\subsection{Data and Performance Profiles}
\label{subsec:protocol}

Following~\cite{more2009benchmarking}, a run on problem $p$ is declared successful at
iteration $k$ if
\begin{equation}\label{eq:profile-tol}
  f(x_k)\ \le\ f_L\;+\;\tau\bigl(f(x_0)-f_L\bigr),
\end{equation}
where $f_L$ is the best objective value achieved by any solver on $p$.
We use the standard \emph{performance} and \emph{data} profiles,
with time counters $t_{p,s}$ defined as the number of function evaluations used by solver $s$
to satisfy~\eqref{eq:profile-tol}. The performance profile of $s$ is
\[
\rho_s(\alpha)\;=\;\frac{1}{|{\mathcal P}|}\,
\Bigl|\,\bigl\{\,p\in{\mathcal P}:\ 
t_{p,s}\ \le\ \alpha\cdot \min_{s'} t_{p,s'}\ \bigr\}\Bigr|,
\]
and the data profile is
\[
d_s(\kappa)\;=\;\frac{1}{|{\mathcal P}|}\,
\Bigl|\,\bigl\{\,p\in{\mathcal P}:\  t_{p,s}\ \le\ \kappa\,(n_p+1)\ \bigr\}\Bigr|,
\]
where $n_p$ is the dimension of problem $p$~\cite[Section~5]{MR4758401}.
All profiles are computed with the \emph{true} objective values while the solvers operate on their (possibly stochastic) estimates, and the evaluation budget is fixed  as in~\cite[Section 5]{MR4758401}.
We enforce a budget of $10^4(n_p+1)$ function evaluations per run.
We report results for two tolerances $\tau\in\{10^{-2},10^{-4}\}$.\\
To evaluate performances we use a standard nonsmooth DFO test set (unconstrained), aggregating all runs across problems and random seeds when building the profiles, as in~\cite[Section~5]{MR4758401}. The benchmark set comprises 96 instances of nonsmooth DFO problems (see \cite[Table 1]{MR4758401}). To stabilize comparisons, we solve each instance \emph{five} times per solver with different seeds, yielding $|P|=96 \times 5 = 480$ runs per solver.  
We simulated noise after averaging $p_k$ independent samples by adding to the objective $N(0, 1/p_k)$ distributed random variables. The remaining parameters were tuned with a basic grid search.\\
In Figure \ref{fig:profiles-grid}, we report data and performance profiles. It is easy to see that, for both tolerances, DSE outperforms the competitors for the whole range of budgets/ratios. More specifically, 
\begin{itemize}
\item \textbf{Data profiles.} The DSE curve (blue) lies strictly above those of SDS (red),
StoMADS (black),  OSNNOA (china blue) and GS (purple), for essentially
all budgets. At $\tau=10^{-2}$ the gap is pronounced, with DSE solving the largest
fraction of problems at small budgets; Even at $\tau = 10^{-4}$, DSE continues to have a distinct advantage.
\item \textbf{Performance profiles.} DSE attains the highest proportion of ``best-within-a-factor''
wins across ratios $\alpha$. The separation from SDS is visible already near $\alpha\approx 1$,
and remains throughout the range; For the majority of problems, StoMADS, GS, and OSNNOA perform worse.
\end{itemize}
Overall, the profiles indicate that DSE is highly
effective in practice and the multi-phase strategy helps to improve performances: the initial coordinate searches efficiently capture simple descent directions, while the dense directions give strong thorough exploration once stepsizes are small.
\begin{figure}[t]
  \centering

  \begin{subfigure}{0.4\textwidth}
    \centering
    \includegraphics[width=\linewidth]{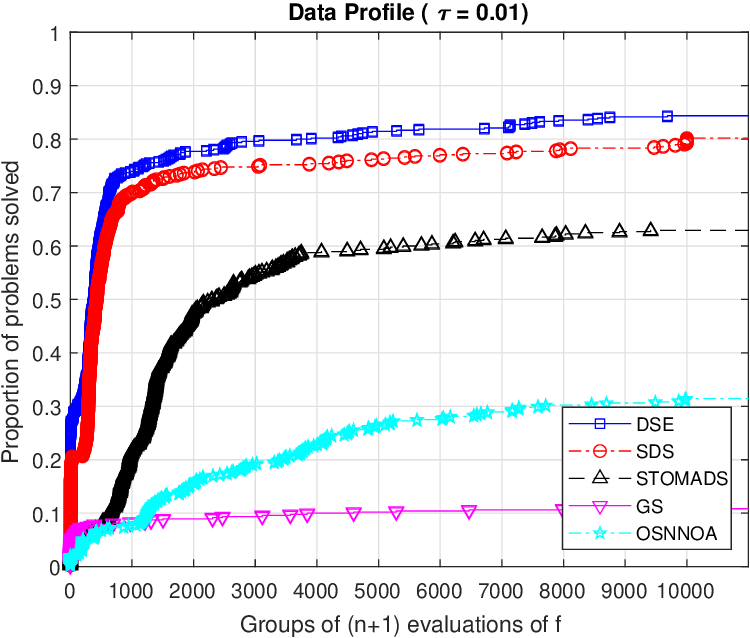}
    \caption{Data profile, $\tau=10^{-2}$}
    \label{fig:data-2}
  \end{subfigure}
  \hspace{3em}
  \begin{subfigure}{0.4\textwidth}
    \centering
    \includegraphics[width=\linewidth]{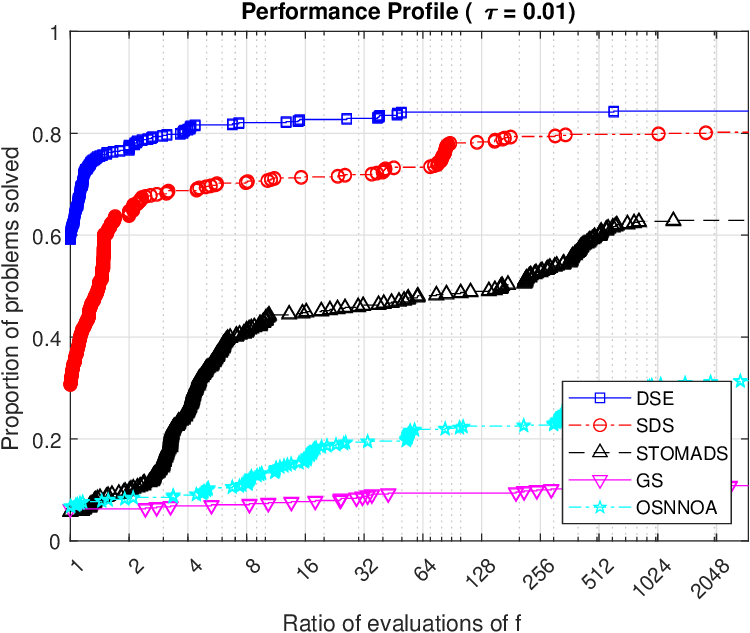}
    \caption{Performance profile, $\tau=10^{-2}$}
    \label{fig:perf-2}
  \end{subfigure}

  \vspace{0.75em}

  \begin{subfigure}{0.4\textwidth}
    \centering
    \includegraphics[width=\linewidth]{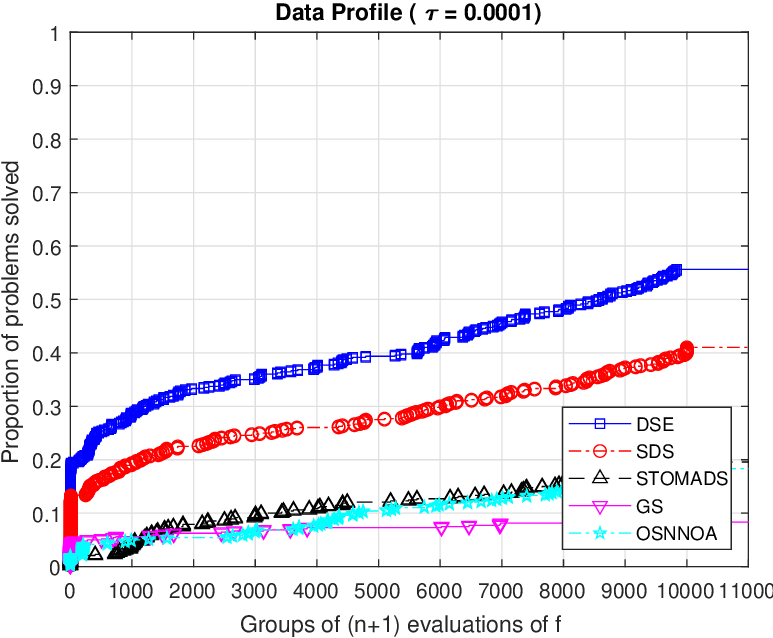}
    \caption{Data profile, $\tau=10^{-4}$}
    \label{fig:data-4}
  \end{subfigure}
  \hspace{3em}
  \begin{subfigure}{0.4\textwidth}
    \centering
    \includegraphics[width=\linewidth]{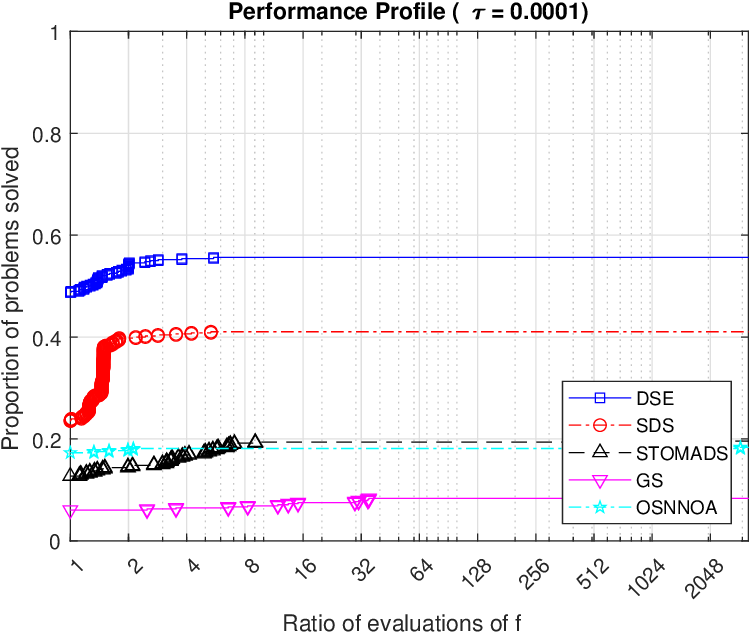}
    \caption{Performance profile, $\tau=10^{-4}$}
    \label{fig:perf-4}
  \end{subfigure}

  \caption{Data (left) and performance (right) profiles. Top row: $\tau=10^{-2}$. Bottom row: $\tau=10^{-4}$.}
  \label{fig:profiles-grid}
\end{figure}

%% file: sections/conclusions.tex
\section{Conclusions and Future Directions}
\label{sec:conclusions}

We proposed and analyzed DSE, a stochastic direct-search method with extrapolation
for unconstrained nonsmooth zeroth-order optimization. The meth\-od combines random
polling directions with a stochastic sufficient-decrease line search and uses only noisy
function-value estimates. Under conditional tail and independence assumptions on the
oracle errors, we proved almost-sure convergence of refining subsequences to Clarke stationary
points.\\
We further established expected complexity bounds for reaching
\((r,\varepsilon)\)-Goldstein stationarity. The proof combines a merit-function
drift argument with a supermartingale stopping-time framework and a
spherical-cap probability estimate for the random polling directions. At the
outer-iteration level, we obtain an expected iteration complexity 
$\mathcal O\left(
        \max\left\{
            r^{-p},
            \varepsilon^{-p/(p-1)}
        \right\}
    \right).$
Moreover, after choosing $\bar{m}$, the maximum number of polling directions to be used at each iteration,  large enough to keep the
success probability uniformly bounded away from \(1/2\), the expected
tested-point complexity satisfies
$\mathcal O\left(
        \varepsilon^{1-n}
        \max\left\{
            r^{-p},
            \varepsilon^{-p/(p-1)}
        \right\}
    \right).$ The numerical results show that extrapolation can substantially improve the practical
performance of stochastic direct-search schemes on nonsmooth benchmark problems.
In particular, the proposed implementation compares favorably with SDS, StoMADS,
and randomized-smoothing baselines on the tested instances.\\
Several directions remain open. First, it would be interesting to sharpen the
dimension dependence in the nonsmooth complexity bound or to identify structural
conditions under which the conservative spherical-cap factor can be improved.  Second, extending the analysis to variance-reduction mechanisms,
common-random-number estimators, and constrained nonsmooth stochastic problems
would further broaden the applicability of the approach.

%% file: sections/appendix.tex
\section{Proofs of the Theoretical Results} \label{appendix:A}

\begin{proof}[Lemma \ref{lem: conditional mean bound}]
Work conditionally on $\mathcal F_{k-1}$ throughout. Introduce the normalized error
\[
Z_{k,i}\ :=\ \frac{|\bar E_{k,i}|}{\Delta_k^{\Dexp}}\ \ge 0 .
\]
Assumption~\ref{ass:quadratic tail} is exactly the conditional tail bound
\[
\mathbb{P}\!\left(Z_{k,i}\ \ge\ \alpha\ \middle|\ \mathcal F_{k-1}\right)\ \le\ \frac{\varepsilon_h}{\alpha^{\aexp}}
\qquad\forall\,\alpha\ge\varepsilon_h.
\]
By the layer-cake representation,
\[
\mathbb{E}\!\left[Z_{k,i}\ \middle|\ \mathcal F_{k-1}\right]
=\int_0^\infty \mathbb{P}\!\left(Z_{k,i}\ge t\ \middle|\ \mathcal F_{k-1}\right)\,dt.
\]
Fix any cutoff $c\ge\varepsilon_h$ and split the integral:
\[
\mathbb{E}[Z_{k,i}\mid\mathcal F_{k-1}]
\ \le\ \int_0^{c} 1\,dt
\ +\ \int_{c}^{\infty} \frac{\varepsilon_h}{t^{\aexp}}\,dt
\ =\ c+\varepsilon_h(\Dexp-1)c^{-1/(\Dexp-1)}.
\]
The right-hand side is minimized (over $c\ge\varepsilon_h$) at
\(
c^\star=\max\{\varepsilon_h^{(\Dexp-1)/\Dexp},\ \varepsilon_h\}
\)
so
\[
\min_{c\ge\varepsilon_h}\Big(c+\varepsilon_h(\Dexp-1)c^{-1/(\Dexp-1)}\Big)
=
\begin{cases}
\Dexp \varepsilon_h^{(\Dexp-1)/\Dexp}, & \varepsilon_h\le 1, \quad \textit{case A}\\[3pt]
\varepsilon_h \left[ 1+ (\Dexp-1)\varepsilon_h^{-1/(\Dexp-1)}\right], & \varepsilon_h>1 \quad \textit{ case B}.
\end{cases}
\]
Therefore
\(
\mathbb{E}[Z_{k,i}\mid\mathcal F_{k-1}]\le \mu_h
\)
with
$\mu_h$ defined as above.
Multiplying back by $\Delta_k^{\Dexp}$ yields
\(
\mathbb{E}[|\bar E_{k,i}|\mid\mathcal F_{k-1}]\le \mu_h\,\Delta_k^{\Dexp}
\),
uniformly in $k$ and $i$, and the result is proved.
\end{proof}

\begin{proof}[Lemma \ref{lem:diff-L1-from-A44}]
Work conditionally on $\mathcal F_{k-1}$ throughout. By the previous lemma, we have
\[
\mathbb E\big[\,|\bar E_{k}^{(-1)}|\,\big|\,\mathcal F_{k-1}\big]\ \le\ \mu_h\,\Delta_k^{\Dexp},
\qquad
\mathbb E\big[\,|\bar E_{k,m}^{(i)}|\,\big|\,\mathcal F_{k-1}\big]\ \le\ \mu_h\,\Delta_k^{\Dexp}
\quad \forall i,\ \forall m.
\]
Using $E_{k,m}^{(i)}=\bar E_{k}^{(-1)}-\bar E_{k,m}^{(i)}$ and the triangle inequality,
\[
\mathbb E\big[\,|E_{k,m}^{(i)}|\,\big|\,\mathcal F_{k-1}\big]
\ \le\ \mathbb E\big[\,|\bar E_{k}^{(-1)}|\,\big|\,\mathcal F_{k-1}\big]
      + \mathbb E\big[\,|\bar E_{k,m}^{(i)}|\,\big|\,\mathcal F_{k-1}\big]
\ \le\ 2\,\mu_h\,\Delta_k^{\Dexp},
\]
which is \eqref{eq:diff-L1-core}. The signed bound \eqref{eq:diff-signed} follows since
$|\mathbb E[E_{k,m}^{(i)}\mid\mathcal F_{k-1}]|\le \mathbb E[|E_{k,m}^{(i)}|\mid\mathcal F_{k-1}]$ by Jensen inequality.
\end{proof}
\noindent
Finally, bounding $E^{\max}_k \leq \sum_{m}\sum_{i}|E_{k,m}^{(i)}|$ and applying the previous result, immediately yields Lemma \ref{ass:postL1} as well.

%% file: sections/appendix_estimates.tex
\section{Random Estimates Construction}
\label{appendix:random estimates}

Standard sampling strategies in stochastic derivative-free optimization often rely on averaging multiple independent realizations of the stochastic oracle. Under the classical assumptions commonly imposed on such sample-average estimators, one can establish the stronger oracle conditions typically assumed in the stochastic derivative-free optimization literature, which in turn imply Assumption~\ref{ass:quadratic tail}. The purpose of this appendix is to make this connection explicit and derive the corresponding batch-size requirements.
\begin{proposition}[Rosenthal batch size for Assumption~\ref{ass:quadratic tail}]
\label{prop:rosenthal-ass33}
Let \(p\in(1,2]\) and set
\[
    r:=\frac{p}{p-1}\ge2.
\]
Let \(\mathcal H\) be a sigma-algebra, let \(X\) be an
\(\mathcal H\)-measurable random point, and let \(\Delta>0\) be an
\(\mathcal H\)-measurable random variable.
Suppose that, conditionally on \(\mathcal H\), the oracle samples
\[
    F(X,\zeta_1),\ldots,F(X,\zeta_W)
\]
are independent and satisfy, for some \(\sigma>0\),
\[
    \mathbb E\!\left[
        F(X,\zeta_j)-f(X)
        \,\middle|\,
        \mathcal H
    \right]
    =0, 
    \qquad 
    \mathbb E\!\left[
        \left|F(X,\zeta_j)-f(X)\right|^r
        \,\middle|\,
        \mathcal H
    \right]
    \le \sigma^r,
\]
for every \(j=1,\ldots,W\), with \(W\) a positive, integer-valued,
\(\mathcal H\)-measurable random variable.\\
Define the sample-average estimator
\[
    \tilde f(X)
    :=
    \frac{1}{W}
    \sum_{j=1}^{W}F(X,\zeta_j).
\]
Then there exists a constant \(C_r>0\), depending only on \(r\), such that,
if
\[
    W
    \ge
    \left(
        \frac{C_r\sigma^r}{\varepsilon_h}
    \right)^{2/r}
    \Delta^{-2p},
\]
then
\[
    \mathbb P\!\left(
        |\tilde f(X)-f(X)|
        \ge
        \alpha\Delta^p
        \,\middle|\,
        \mathcal H
    \right)
    \le
    \frac{\varepsilon_h}{\alpha^{p/(p-1)}}
    \qquad
    \text{for every }\alpha>0.
\]
Consequently, a batch size
 $W=\mathcal O(\Delta^{-2p})$
is sufficient to satisfy Assumption~\ref{ass:quadratic tail}.
\end{proposition}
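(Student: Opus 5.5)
The plan is to bound the $r$-th conditional moment of the sample-average error with Rosenthal's inequality and then apply a conditional Markov inequality at exponent $r=p/(p-1)$, which is exactly the exponent in the tail of Assumption~\ref{ass:quadratic tail}. Work conditionally on $\mathcal H$ throughout. Since $X$, $\Delta$ and $W$ are $\mathcal H$-measurable, they can be treated as constants under the conditional law (formally, via a regular conditional distribution or by testing against events in $\mathcal H$, as in the proof of Lemma~\ref{l:rv}). Set $Y_j:=F(X,\zeta_j)-f(X)$. These are conditionally independent and centered, with $\mathbb E[|Y_j|^r\mid\mathcal H]\le\sigma^r$, and $\tilde f(X)-f(X)=W^{-1}\sum_{j=1}^W Y_j$.

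\textbf{Step 1 (moment bound).} Apply Rosenthal's inequality for $r\ge2$:
\[
\mathbb E\Bigl[\Bigl|\sum_{j=1}^W Y_j\Bigr|^r\,\Big|\,\mathcal H\Bigr]\le K_r\Bigl(\sum_j \mathbb E[|Y_j|^r\mid\mathcal H]+\Bigl(\sum_j\mathbb E[Y_j^2\mid\mathcal H]\Bigr)^{r/2}\Bigr).
\]
Jensen gives $\mathbb E[Y_j^2\mid\mathcal H]\le\sigma^2$, so the right-hand side is at most $K_r(W\sigma^r+W^{r/2}\sigma^r)\le 2K_r W^{r/2}\sigma^r$, using $W\ge1$ and $r\ge2$. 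Dividing by $W^r$ yields
\[
\mathbb E\bigl[|\tilde f(X)-f(X)|^r\mid\mathcal H\bigr]\le C_r\sigma^r W^{-r/2},\qquad C_r:=2K_r .
\]

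\textbf{Step 2 (tail bound).} For any $\alpha>0$, the conditional Markov inequality with exponent $r$ gives
\[
\mathbb P\bigl(|\tilde f(X)-f(X)|\ge\alpha\Delta^p\mid\mathcal H\bigr)\le\frac{C_r\sigma^r W^{-r/2}}{\alpha^r\Delta^{pr}}.
\]
The batch-size condition is equivalent to $W^{r/2}\ge (C_r\sigma^r/\varepsilon_h)\,\Delta^{-pr}$, since raising $W\ge (C_r\sigma^r/\varepsilon_h)^{2/r}\Delta^{-2p}$ to the power $r/2$ gives exactly this. Substituting, the bound becomes $\varepsilon_h/\alpha^{r}=\varepsilon_h/\alpha^{p/(p-1)}$, valid for every $\alpha>0$ and in particular for every $\alpha\ge\varepsilon_h$.

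\textbf{Step 3 (the consequence).} Take $\mathcal H=\mathcal F_{k-1}$ (enlarged if necessary so that $W$ is measurable), $\Delta=\Delta_k$, and $X$ equal to $X_k$ or to $X_{k,m}^{(i)}$, all of which are $\mathcal F_{k-1}$-measurable. Use fresh independent samples for each tested point. This verifies both inequalities in \eqref{eq:A4}, and since $(C_r\sigma^r/\varepsilon_h)^{2/r}$ is a constant, $W=\mathcal O(\Delta^{-2p})$ suffices.

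The main obstacle is technical rather than computational: applying Rosenthal's inequality with a random, $\mathcal H$-measurable number $W$ of summands and under conditional moments. I would handle this by working on each event $\{W=w\}\in\mathcal H$ separately, where the inequality holds with the deterministic count $w$ and constant $K_r$ independent of $w$, and then summing over $w$. The remaining care is to check that the constant $C_r$ depends only on $r$.
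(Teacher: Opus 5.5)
Your proposal is correct and follows the paper's proof essentially step for step: a conditional Rosenthal bound $\mathbb E[|\tilde f(X)-f(X)|^r\mid\mathcal H]\le C_r\sigma^r W^{-r/2}$, then the conditional Markov inequality at exponent $r=p/(p-1)$, with the batch-size condition reducing the bound to $\varepsilon_h/\alpha^{r}$. Your extra details only make explicit what the paper leaves implicit: Lyapunov for the second moments, $W\le W^{r/2}$ to merge the two Rosenthal terms, and working on each event $\{W=w\}\in\mathcal H$ to handle the random number of summands.
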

\begin{proof}
Define
\[
    \xi_j:=F(X,\zeta_j)-f(X),
    \qquad j=1,\ldots,W.
\]
Then
\[
    \tilde f(X)-f(X)
    =
    \frac{1}{W}\sum_{j=1}^{W}\xi_j.
\]
Conditionally on \(\mathcal H\), the random variables
\(\xi_1,\ldots,\xi_W\) are independent and centered, and satisfy
\[
    \mathbb E\!\left[
        |\xi_j|^r
        \,\middle|\,
        \mathcal H
    \right]
    \le\sigma^r.
\]
Since \(r\ge2\), the conditional version of Rosenthal's inequality gives
\[
\begin{aligned}
    \mathbb E\!\left[
        \left|
        \sum_{j=1}^{W}\xi_j
        \right|^r
        \,\middle|\,
        \mathcal H
    \right]
    \le C_r\sigma^r W^{r/2},
\end{aligned}
\]
where \(C_r>0\) depends only on \(r\).
Therefore,
\[
    \mathbb E\!\left[
        |\tilde f(X)-f(X)|^r
        \,\middle|\,
        \mathcal H
    \right]
    \le
    C_r\sigma^r W^{-r/2}.
\]
By conditional Markov's inequality,
\[
\begin{aligned}
    &\mathbb P\!\left(
        |\tilde f(X)-f(X)|
        \ge
        \alpha\Delta^p
        \,\middle|\,
        \mathcal H
    \right)
    \le
    \frac{
        C_r\sigma^r W^{-r/2}
    }{
        \alpha^r\Delta^{pr}
    }.
\end{aligned}
\]
If
\[
    W
    \ge
    \left(
        \frac{C_r\sigma^r}{\varepsilon_h}
    \right)^{2/r}
    \Delta^{-2p},
\]
then
\[
    C_r\sigma^r W^{-r/2}\Delta^{-pr}
    \le\varepsilon_h.
\]
Hence
\[
    \mathbb P\!\left(
        |\tilde f(X)-f(X)|
        \ge
        \alpha\Delta^p
        \,\middle|\,
        \mathcal H
    \right)
    \le
    \frac{\varepsilon_h}{\alpha^r}.
\]
Since \(r=p/(p-1)\), the claim follows.
\end{proof}
\begin{remark}[Application to the algorithm]
At iteration \(k\), Proposition~\ref{prop:rosenthal-ass33} is applied with
\[
    \mathcal H=\mathcal F_{k-1},
    \qquad
    \Delta=\Delta_k.
\]
For the base-point estimate, take \(X=X_k\), while for the trial-point estimate associated with direction \(m\) and extrapolation level \(i\), take $X=X_{k,m}^{(i)}.$
Thus, define
\[
    \tilde f_k^{(-1)}
    :=
    \frac{1}{W_k^{(-1)}}
    \sum_{j=1}^{W_k^{(-1)}}
    F\bigl(X_k,\zeta_{k,j}^{(-1)}\bigr)
\]
and
\[
    \tilde f_{k,m}^{(i)}
    :=
    \frac{1}{W_{k,m}^{(i)}}
    \sum_{j=1}^{W_{k,m}^{(i)}}
    F\bigl(X_{k,m}^{(i)},\zeta_{k,m,j}^{(i)}\bigr).
\]
Here,
\[
    \left\{\zeta_{k,j}^{(-1)}\right\}_{j}
    \quad\text{and}\quad
    \left\{\zeta_{k,m,j}^{(i)}\right\}_{m,i,j}
\]
denote conditionally independent copies of the generic oracle random variable \(\zeta\), given \(\mathcal F_{k-1}\).\\
These estimates satisfy Assumption~\ref{ass:quadratic tail} whenever
\[
    W_k^{(-1)}
    =
    \mathcal O(\Delta_k^{-2p}),
    \qquad
    W_{k,m}^{(i)}
    =
    \mathcal O(\Delta_k^{-2p}),
\]
uniformly over \(m\in[1:\bar m]\) and \(i\in[0:\Imax]\).\\
Moreover, using fresh conditionally independent batches for the base point and for all potential trial points ensures that
Assumption~\ref{ass:indep} is also satisfied.
\end{remark}
\begin{remark}
For $p=2$, one has $r=2$, and the result reduces to the usual finite-variance
case with
$W_k=\mathcal O(\Delta_k^{-4}).$
For $p\in(1,2)$, one has $r>2$, so a moment assumption stronger than finite
variance is needed in order to obtain the tail exponent required by
Assumption~\ref{ass:quadratic tail}.
\end{remark}

%% file: sdse.bib
@article {MR4959972,
    AUTHOR = {Ha, Yunsoo and Shashaani, Sara and Pasupathy, Raghu},
     TITLE = {Complexity of zeroth- and first-order stochastic trust-region
              algorithms},
   JOURNAL = {SIAM J. Optim.},
  FJOURNAL = {SIAM Journal on Optimization},
    VOLUME = {35},
      YEAR = {2025},
    NUMBER = {3},
     PAGES = {2098--2127},
      ISSN = {1052-6234,1095-7189},
   MRCLASS = {90C15 (90C55)},
  MRNUMBER = {4959972},
       DOI = {10.1137/24M1664484},
       URL = {https://doi.org/10.1137/24M1664484},
}

@article {MR3880261,
    AUTHOR = {Shashaani, Sara and Hashemi, Fatemeh S. and Pasupathy, Raghu},
     TITLE = {A{STRO}-{DF}: a class of adaptive sampling trust-region
              algorithms for derivative-free stochastic optimization},
   JOURNAL = {SIAM J. Optim.},
  FJOURNAL = {SIAM Journal on Optimization},
    VOLUME = {28},
      YEAR = {2018},
    NUMBER = {4},
     PAGES = {3145--3176},
      ISSN = {1052-6234,1095-7189},
   MRCLASS = {90C56 (90C15)},
  MRNUMBER = {3880261},
MRREVIEWER = {Wim\ van Ackooij},
       DOI = {10.1137/15M1042425},
       URL = {https://doi.org/10.1137/15M1042425},
}

@article {MR3627456,
    AUTHOR = {Nesterov, Yurii and Spokoiny, Vladimir},
     TITLE = {Random gradient-free minimization of convex functions},
   JOURNAL = {Found. Comput. Math.},
  FJOURNAL = {Foundations of Computational Mathematics. The Journal of the
              Society for the Foundations of Computational Mathematics},
    VOLUME = {17},
      YEAR = {2017},
    NUMBER = {2},
     PAGES = {527--566},
      ISSN = {1615-3375,1615-3383},
   MRCLASS = {90C25 (68Q25 90C15 90C56)},
  MRNUMBER = {3627456},
       DOI = {10.1007/s10208-015-9296-2},
       URL = {https://doi.org/10.1007/s10208-015-9296-2},
}

@article {MR4919092,
    AUTHOR = {Dzahini, K. J. and Rinaldi, F. and Royer, C. W. and Zeffiro,
              D.},
     TITLE = {Direct-search methods in the year 2025: theoretical guarantees
              and algorithmic paradigms},
   JOURNAL = {EURO J. Comput. Optim.},
  FJOURNAL = {EURO Journal on Computational Optimization},
    VOLUME = {13},
      YEAR = {2025},
     PAGES = {Paper No. 100110, 24},
      ISSN = {2192-4406,2192-4414},
   MRCLASS = {90-02 (65K05 90C30)},
  MRNUMBER = {4919092},
       DOI = {10.1016/j.ejco.2025.100110},
       URL = {https://doi.org/10.1016/j.ejco.2025.100110},
}

@article {MR4758401,
    AUTHOR = {Rinaldi, F. and Vicente, L. N. and Zeffiro, D.},
     TITLE = {Stochastic trust-region and direct-search methods: a weak tail
              bound condition and reduced sample sizing},
   JOURNAL = {SIAM J. Optim.},
  FJOURNAL = {SIAM Journal on Optimization},
    VOLUME = {34},
      YEAR = {2024},
    NUMBER = {2},
     PAGES = {2067--2092},
      ISSN = {1052-6234,1095-7189},
   MRCLASS = {90C26 (90C15 90C56)},
  MRNUMBER = {4758401},
       DOI = {10.1137/22M1543446},
       URL = {https://doi.org/10.1137/22M1543446},
}

@article {MR4359469,
    AUTHOR = {Dzahini, Kwassi Joseph},
     TITLE = {Expected complexity analysis of stochastic direct-search},
   JOURNAL = {Comput. Optim. Appl.},
  FJOURNAL = {Computational Optimization and Applications. An International
              Journal},
    VOLUME = {81},
      YEAR = {2022},
    NUMBER = {1},
     PAGES = {179--200},
      ISSN = {0926-6003,1573-2894},
   MRCLASS = {90C15 (90C26 90C56)},
  MRNUMBER = {4359469},
MRREVIEWER = {Dmitri\ E.\ Kvasov},
       DOI = {10.1007/s10589-021-00329-9},
       URL = {https://doi.org/10.1007/s10589-021-00329-9},
}

@article {MR4777851,
    AUTHOR = {Dzahini, K. J. and Wild, S. M.},
     TITLE = {Stochastic trust-region algorithm in random subspaces with
              convergence and expected complexity analyses},
   JOURNAL = {SIAM J. Optim.},
  FJOURNAL = {SIAM Journal on Optimization},
    VOLUME = {34},
      YEAR = {2024},
    NUMBER = {3},
     PAGES = {2671--2699},
      ISSN = {1052-6234,1095-7189},
   MRCLASS = {90C15 (90C55 90C56)},
  MRNUMBER = {4777851},
MRREVIEWER = {Morteza\ Kimiaei},
       DOI = {10.1137/22M1524072},
       URL = {https://doi.org/10.1137/22M1524072},
}

@article {MR4060460,
    AUTHOR = {Paquette, Courtney and Scheinberg, Katya},
     TITLE = {A stochastic line search method with expected complexity
              analysis},
   JOURNAL = {SIAM J. Optim.},
  FJOURNAL = {SIAM Journal on Optimization},
    VOLUME = {30},
      YEAR = {2020},
    NUMBER = {1},
     PAGES = {349--376},
      ISSN = {1052-6234,1095-7189},
   MRCLASS = {90C30 (90C15)},
  MRNUMBER = {4060460},
       DOI = {10.1137/18M1216250},
       URL = {https://doi.org/10.1137/18M1216250},
}

@article {MR4151319,
    AUTHOR = {Blanchet, Jose and Cartis, Coralia and Menickelly, Matt and
              Scheinberg, Katya},
     TITLE = {Convergence rate analysis of a stochastic trust-region method
              via supermartingales},
   JOURNAL = {INFORMS J. Optim.},
  FJOURNAL = {INFORMS Journal on Optimization},
    VOLUME = {1},
      YEAR = {2019},
    NUMBER = {2},
     PAGES = {92--119},
      ISSN = {2575-1484,2575-1492},
   MRCLASS = {90C15 (60G40 60G42 90C55)},
  MRNUMBER = {4151319},
MRREVIEWER = {I.\ M.\ Stancu-Minasian},
       DOI = {10.1287/ijoo.2019.0016},
       URL = {https://doi.org/10.1287/ijoo.2019.0016},
}

@article {MR3800867,
    AUTHOR = {Chen, R. and Menickelly, M. and Scheinberg, K.},
     TITLE = {Stochastic optimization using a trust-region method and random
              models},
   JOURNAL = {Math. Program.},
  FJOURNAL = {Mathematical Programming},
    VOLUME = {169},
      YEAR = {2018},
    NUMBER = {2},
     PAGES = {447--487},
      ISSN = {0025-5610,1436-4646},
   MRCLASS = {90C15 (90C30 90C56)},
  MRNUMBER = {3800867},
MRREVIEWER = {I.\ M.\ Stancu-Minasian},
       DOI = {10.1007/s10107-017-1141-8},
       URL = {https://doi.org/10.1007/s10107-017-1141-8},
}

@article {MR3245880,
    AUTHOR = {Bandeira, A. S. and Scheinberg, K. and Vicente, L. N.},
     TITLE = {Convergence of trust-region methods based on probabilistic
              models},
   JOURNAL = {SIAM J. Optim.},
  FJOURNAL = {SIAM Journal on Optimization},
    VOLUME = {24},
      YEAR = {2014},
    NUMBER = {3},
     PAGES = {1238--1264},
      ISSN = {1052-6234,1095-7189},
   MRCLASS = {90C56 (90C30)},
  MRNUMBER = {3245880},
MRREVIEWER = {Saman\ Babaie-Kafaki},
       DOI = {10.1137/130915984},
       URL = {https://doi.org/10.1137/130915984},
}

@article {MR4238147,
    AUTHOR = {Audet, Charles and Dzahini, Kwassi Joseph and Kokkolaras,
              Michael and Le Digabel, S\'ebastien},
     TITLE = {Stochastic mesh adaptive direct search for blackbox
              optimization using probabilistic estimates},
   JOURNAL = {Comput. Optim. Appl.},
  FJOURNAL = {Computational Optimization and Applications. An International
              Journal},
    VOLUME = {79},
      YEAR = {2021},
    NUMBER = {1},
     PAGES = {1--34},
      ISSN = {0926-6003,1573-2894},
   MRCLASS = {90C56 (60G42 90C15)},
  MRNUMBER = {4238147},
MRREVIEWER = {Suvra\ Kanti\ Chakraborty},
       DOI = {10.1007/s10589-020-00249-0},
       URL = {https://doi.org/10.1007/s10589-020-00249-0},
}

@book {MR2487816,
    AUTHOR = {Conn, Andrew R. and Scheinberg, Katya and Vicente, Luis N.},
     TITLE = {Introduction to derivative-free optimization},
    SERIES = {MPS/SIAM Series on Optimization},
    VOLUME = {8},
 PUBLISHER = {Society for Industrial and Applied Mathematics (SIAM),
              Philadelphia, PA; Mathematical Programming Society (MPS),
              Philadelphia, PA},
      YEAR = {2009},
     PAGES = {xii+277},
      ISBN = {978-0-898716-68-9},
   MRCLASS = {90-02 (65K05 90C30 90C56)},
  MRNUMBER = {2487816},
MRREVIEWER = {Olga\ Brezhneva},
       DOI = {10.1137/1.9780898718768},
       URL = {https://doi.org/10.1137/1.9780898718768},
}

@article {MR3963507,
    AUTHOR = {Larson, Jeffrey and Menickelly, Matt and Wild, Stefan M.},
     TITLE = {Derivative-free optimization methods},
   JOURNAL = {Acta Numer.},
  FJOURNAL = {Acta Numerica},
    VOLUME = {28},
      YEAR = {2019},
     PAGES = {287--404},
      ISSN = {0962-4929,1474-0508},
   MRCLASS = {90C56 (65Kxx)},
  MRNUMBER = {3963507},
MRREVIEWER = {Dmitri\ E.\ Kvasov},
       DOI = {10.1017/s0962492919000060},
       URL = {https://doi.org/10.1017/s0962492919000060},
}

@article {MR3506227,
    AUTHOR = {Larson, Jeffrey and Billups, Stephen C.},
     TITLE = {Stochastic derivative-free optimization using a trust region
              framework},
   JOURNAL = {Comput. Optim. Appl.},
  FJOURNAL = {Computational Optimization and Applications. An International
              Journal},
    VOLUME = {64},
      YEAR = {2016},
    NUMBER = {3},
     PAGES = {619--645},
      ISSN = {0926-6003,1573-2894},
   MRCLASS = {90C56 (62L20)},
  MRNUMBER = {3506227},
MRREVIEWER = {Ulf\ Lorenz},
       DOI = {10.1007/s10589-016-9827-z},
       URL = {https://doi.org/10.1007/s10589-016-9827-z},
}

@article {MR4550962,
    AUTHOR = {Dzahini, Kwassi Joseph and Kokkolaras, Michael and Le Digabel,
              S\'ebastien},
     TITLE = {Constrained stochastic blackbox optimization using a
              progressive barrier and probabilistic estimates},
   JOURNAL = {Math. Program.},
  FJOURNAL = {Mathematical Programming},
    VOLUME = {198},
      YEAR = {2023},
    NUMBER = {1},
     PAGES = {675--732},
      ISSN = {0025-5610,1436-4646},
   MRCLASS = {90C15 (90C30 90C56)},
  MRNUMBER = {4550962},
MRREVIEWER = {Kurt\ Marti},
       DOI = {10.1007/s10107-022-01787-7},
       URL = {https://doi.org/10.1007/s10107-022-01787-7},
}

@misc{desantis2025linesearchbasedderivativefreemethodnoisy,
      title={A linesearch-based derivative-free method for noisy black-box problems}, 
      author={Alberto De Santis and Giampaolo Liuzzi and Stefano Lucidi},
      year={2025},
      eprint={2508.00495},
      archivePrefix={arXiv},
      primaryClass={math.OC},
      url={https://arxiv.org/abs/2508.00495}, 
}

@book{Clarke1990,
  title = {Optimization and Nonsmooth Analysis},
  publisher = {Society for Industrial and Applied Mathematics},
  author = {Clarke,  Frank H.},
  year = {1990} 
}

@book{Jahn1996,
  title = {Introduction to the Theory of Nonlinear Optimization},
  publisher = {Springer Berlin Heidelberg},
  author = {Jahn,  Johannes},
  year = {1996}
}

@book{MardiaJupp2000,
  author    = {Mardia, Kanti V. and Jupp, Peter E.},
  title     = {Directional Statistics},
  publisher = {John Wiley \& Sons},
  address   = {Chichester},
  year      = {2000}
}

@article{more2009benchmarking,
  title={Benchmarking derivative-free optimization algorithms},
  author={Mor{\'e}, Jorge J and Wild, Stefan M},
  journal={SIAM Journal on Optimization},
  volume={20},
  number={1},
  pages={172--191},
  year={2009},
  publisher={SIAM}
}

@article{JMLR:kornowskishamir,
  author  = {Guy Kornowski and Ohad Shamir},
  title   = {An Algorithm with Optimal Dimension-Dependence for Zero-Order Nonsmooth Nonconvex Stochastic Optimization},
  journal = {Journal of Machine Learning Research},
  year    = {2024},
  volume  = {25},
  number  = {122},
  pages   = {1--14},
  url     = {http://jmlr.org/papers/v25/23-1159.html}
}

@article{Goldstein1977,
  author  = {Goldstein, A. A.},
  title   = {Optimization of Lipschitz Continuous Functions},
  journal = {Mathematical Programming},
  volume  = {13},
  number  = {1},
  pages   = {14--22},
  year    = {1977}
}

@inproceedings{LinEtAl2022GFM,
  author    = {Lin, Tianyi and Zheng, Zeyu and Jordan, Michael I.},
  title     = {Gradient-Free Methods for Deterministic and Stochastic Nonsmooth Nonconvex Optimization},
  booktitle = {Advances in Neural Information Processing Systems},
  year      = {2022}
}

@article{GrattonRoyerVicenteZhang2015,
  author  = {Gratton, Serge and Royer, Clément W. and Vicente, Luís N. and Zhang, Zaikun},
  title   = {Direct Search Based on Probabilistic Descent},
  journal = {SIAM Journal on Optimization},
  volume  = {25},
  number  = {3},
  pages   = {1515--1541},
  year    = {2015},
  doi     = {10.1137/140961602}
}
